\documentclass{amsart}
\usepackage{amsmath}
\usepackage{amsfonts}
\usepackage{amsthm}
\usepackage{amssymb}
\usepackage{cite}
\usepackage{enumerate}
\usepackage[all]{xy}
\usepackage{cite}
\usepackage{mathrsfs}
\usepackage{color}
\usepackage{xcolor}
\usepackage{hyperref}
\usepackage{fancyhdr}
\hypersetup{
	colorlinks=true,
	anchorcolor=blue,
	linkcolor=blue,
	filecolor=blue,
	urlcolor=blue,	
	citecolor=blue,
	bookmarks=true,
	bookmarksopen=true,
	pdfborder=000
}
\numberwithin{equation}{section}
\theoremstyle{plain}
\newtheorem{thm}{Theorem}[section]

\newtheorem{proposition}[thm]{Proposition}

\newtheorem{cor}[thm]{Corollary}

\newtheorem{pro}[thm]{Problem}
\newtheorem{lemma}[thm]{Lemma}
\newtheoremstyle{noparens}%
 {}{}%
 {\itshape}{}%
 {\bfseries}{.}%
 { }%
 {\thmname{#1}\thmnumber{ #2}\mdseries\thmnote{ #3}}
\theoremstyle{noparens}
\newtheorem{lemmaNoParens}[thm]{Lemma}
\newtheorem{thmNoParens}[thm]{Theorem}

\theoremstyle{definition}
\newtheorem{defn}[thm]{Definition}
\newtheorem{ex}[thm]{Example}
\theoremstyle{remark}
\newtheorem{rmk}[thm]{Remark}

\makeatletter

\newcommand{\Rmnum}[1]{\expandafter\@slowromancap\romannumeral #1@}

\newcommand{\K}{K\"{a}hler}
\newcommand{\T}{Teichm\"{u}ller}
\makeatother
\begin{document}
\title{Geometry of bounded generic domains with piecewise smooth boundary}
\author{Xingsi Pu\textsuperscript{1} $\&$ Lang Wang\textsuperscript{2} }
\address{$1.$ Mathematical Science Research Center, Chongqing University of Technology, Chongqing, 400054, China}
\address{$2.$ School of Mathematical Sciences, Guizhou Normal University, Guiyang, 550025, P.R. China.}
\email{puxs@cqut.edu.cn,\:wanglang2020@amss.ac.cn}

\subjclass[2020]{32T27, 32G15, 32M05}
\keywords{Levi flatness, Squeezing function, \T\ space, Finite volume}

\begin{abstract}
In this paper, we study the geometry of bounded domains with piecewise smooth boundary. Specifically, we obtain the relationship between the squeezing function corresponding to polydisk and Levi flatness on bounded generic convex domains. As an application, we prove that a two dimensional bounded generic convex  domain with piecewise $C^2$-smooth boundary that admits a  finite volume quotient is biholomorphic to bidisk. Moreover, we show that any \T\ space $\mathcal{T}_g$ with $g\geq2$ can not be biholomorphic to a bounded generic domain with piecewise $C^2$-smooth boundary.
\end{abstract}

\maketitle
\section{Introduction}

In complex space $\mathbb{C}^n$,  it is a fundamental fact that every bounded domain with $C^2$-smooth boundary admits a strongly pseudoconvex boundary point. Strong pseudoconvexity yields numerous characterizations of the unit ball. A landmark result states that if a strongly pseudoconvex domain has a non-compact automorphism group, then the domain is biholomorphic to the unit ball (one can see \cite{W77,R79} for more details).

On the other hand, Pinchuk \cite{pinchuk} introduced domains with generic piecewise $C^2$-smooth boundary and generalized this characterization to products of balls under the assumption of homogeneity. Moreover, Kodama \cite{K86} improved this result to the domains that have a compact quotient.
For generic convex domains with piecewise Levi flat boundary, Kim \cite{pf} characterized those possessing a non-compact automorphism group, and Fu-Wong \cite{fw} later extended this to two-dimensional simply connected generic domains with piecewise smooth Levi flat boundary. 

This paper is devoted to study the geometry of bounded domains with generic piecewise smooth boundary, a natural class that interpolates between smooth domains and product domains. For a smooth boundary point, strong pseudoconvexity forces the squeezing function to approach $1$. By duality, Levi flatness is expected to play an analogous role for the polydisk: it should force the squeezing function corresponding to polydisk to converge to 1 at boundary points of Levi flat hypersurfaces. Let $\Omega\subset\mathbb{C}^n$ be a bounded generic convex domain with piecewise Levi flat boundary, and $T_{\Omega}$ be the squeezing function corresponding to polydisk. Our first main result confirms this expected boundary behavior in the two-dimensional case.

\begin{thm}\label{main}
    For a two-dimensional bounded generic convex domain $\Omega$ with piecewise Levi flat boundary, if $p\in\partial\Omega$ is a boundary point, then we have
    \[
    \lim\limits_{z\rightarrow p}T_{\Omega}(z)=1.
    \]
\end{thm}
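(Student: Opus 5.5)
Throughout, $T_\Omega(z)=\sup\{r: \exists f:\Omega\to\Delta^2 \text{ holomorphic injective}, f(z)=0, \Delta_r^2\subset f(\Omega)\subset\Delta^2\}$. The plan is a scaling argument combined with stability of the squeezing function under Hausdorff-type convergence. One needs that if holomorphic images of $\Omega$ normalized at $z_j$ converge in the local Hausdorff sense to a domain biholomorphic to $\Delta^2$, then $T_\Omega(z_j)\to1$. This stability statement, or the analogous one for ordinary squeezing functions, is presumably available. Convexity makes the rescaled domains convex, so the Kim--Krantz/Frankel type normal-family argument applies. Since $T_\Omega\le 1$, it suffices to show $\liminf_{j} T_\Omega(z_j)\ge1$ for every sequence $z_j\to p$. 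After passing to a subsequence, I will treat two cases according to the local structure of $\partial\Omega$ near $p$.

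\textbf{Case 1: $p$ is a smooth point of a single Levi flat piece.} A Levi flat convex $C^2$ hypersurface in $\mathbb{C}^2$ is foliated by complex lines. Convexity forces the Levi leaves to be affine pieces of complex lines; indeed, a convex hypersurface containing a complex curve contains its affine complex tangent line locally. So locally $\partial\Omega$ is the product of a real convex curve in a transversal $\mathbb{C}$ with a complex line direction, after a complex-affine change of coordinates. I would rescale at $z_j$ using the Pinchuk/Frankel affine maps $A_j$. These are a dilation in the transversal direction proportional to $\delta_j=\mathrm{dist}(z_j,\partial\Omega)$, together with a normalization in the leaf direction by the width of $\Omega$ along the leaf through the nearest point. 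The limit of $A_j(\Omega)$ is then a convex domain of the form $H\times D$, where $H$ is a half-plane and $D$ is a convex domain in $\mathbb{C}$ or all of $\mathbb{C}$. Because $\Omega$ is bounded, I will choose the leaf-direction normalization so that $D$ is a bounded convex planar domain containing a fixed disc. Then $H\times D\cong\Delta^2$ by the Riemann mapping theorem, and stability gives $T_\Omega(z_j)\to1$.

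\textbf{Case 2: $p$ lies on an intersection of several pieces.} By genericity, at most two pieces meet at $p$ in dimension two, and they meet transversally: the defining differentials are linearly independent over $\mathbb{C}$ or $\mathbb{R}$. One subcase is $z_j$ approaching $p$ much closer to one face than the other; this reduces essentially to Case 1 after rescaling. In the other subcase, both distances are comparable to each other or to the scale. Near $p$ the local model is $\{\rho_1<0,\rho_2<0\}$, and convexity plus Levi flatness makes each $\{\rho_k<0\}$ asymptotically a complex-affine half-space. The rescaled limits are then intersections of two half-spaces $\{\mathrm{Re}\,\ell_1<0\}\cap\{\mathrm{Re}\,\ell_2<0\}$ with $\ell_1,\ell_2$ complex linear. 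If $\ell_1,\ell_2$ are $\mathbb{C}$-independent, which is the generic condition, the limit is a product of two half-planes $\cong\Delta^2$. Mixed regimes, where one distance scale dominates, produce $H\times D$ with $D$ a planar convex domain, again biholomorphic to $\Delta^2$ or at worst to $H\times\mathbb{C}$. I must exclude the $H\times\mathbb{C}$ case by a boundedness/normalization choice.

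\textbf{Main obstacle.} The step I expect to be hardest is ensuring that the rescaled limit is nondegenerate, namely Kobayashi hyperbolic and biholomorphic to $\Delta^2$ rather than $H\times\mathbb{C}$ or a lower-dimensional collapse. This requires choosing the anisotropic scaling in the leaf direction carefully. I would first try Frankel's normalization for convex domains: pick the nearest boundary point, then the maximal disc in the complementary complex direction, and scale each direction by its own extent. Kim--Krantz's theorem then guarantees that a convex limit exists and is hyperbolic. Hyperbolicity plus the product-of-half-planes structure identifies the limit as $\Delta^2$. The remaining routine step is transferring the convergence of domains back to $T_\Omega$, via the stability lemma.
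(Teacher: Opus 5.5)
Your rescaling scheme is essentially the paper's: you split into smooth points and corner points, use a Frankel-type affine normalization, and obtain limits $H\times D_2$ and an intersection of two $\mathbb{C}$-transversal half-spaces, both biholomorphic to $\mathbb{D}^2$. Your observation that a convex $C^2$ Levi flat piece is locally $\ell^{-1}(\gamma)$ for a complex affine $\ell$ is also correct: convexity plus Levi flatness kills the second fundamental form on the complex tangent, so the complex normal line is locally constant.

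The genuine gap is the step you call routine, namely the ``stability lemma'' that $(A_j\Omega,A_jz_j)\to(\Omega_\infty,x_\infty)$ in $\mathbb{X}_{2,0}$ with $\Omega_\infty\cong\mathbb{D}^2$ forces $T_\Omega(z_j)\to1$. No such lower semicontinuity is available. What holds in general is Proposition~\ref{uc}, $\limsup_j T_{\Omega_j}(x_j)\le T_{\Omega_\infty}(x_\infty)$, which is vacuous here because $T\le 1$. There is also continuity when $\Omega_j\subset\Omega_\infty$, but it does not apply, since the rescaled domains need not lie in their limit. The obstruction is structural. Local Hausdorff convergence only controls $A_j\Omega\cap B_R(0)$, whereas a competitor in the definition of $T_\Omega(z_j)$ must embed \emph{all} of $\Omega$ into $\mathbb{D}^2$. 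The part of $\Omega$ at macroscopic distance from $p$ is sent to infinity by $A_j$ and leaves no trace in $\Omega_\infty$. The analogous statement for $S_\Omega$ would make $S_\Omega\to1$ at strongly pseudoconvex points a triviality, whereas that fact is actually proved by constructing global embeddings.

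The paper fills this step with explicit outer models, biholomorphic to $\mathbb{D}^2$, that contain the whole rescaled domain and converge to the limit. At a corner, the supporting vectors give $A_k\Omega\subset E_k=\{\operatorname{Re}\langle z,e_1\rangle<1,\operatorname{Re}\langle z,\beta_k\rangle<1\}$, and $F_k:E_k\to\mathbb{D}^2$ converges to the biholomorphism $F:\Omega_\infty=E\to\mathbb{D}^2$. At a smooth point, it uses $A_j\psi_j\Omega\subset H\times\mathbb{C}$, a Cayley transform in $z_1$, and Riemann maps of planar domains $E_k\supset\supset D_2$, which converge by Carath\'eodory's kernel theorem.

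Do not simply import the smooth-point version, though: slices $\Omega\cap\{z_1=c\}$ away from $p$ can be larger than, or shifted from, the slice through $p$. Take $\Omega=\{|z_1|<1,\ |z_2-z_1|<1\}$, $p=(1,1)$ and $p_j=(1-\delta_j,1)$. Then $D_2=\mathbb{D}$, yet the normalized images of the fixed point $(0,-\tfrac12)\in\Omega$ have first coordinate tending to $\infty$ and second coordinate tending to $-\tfrac32$. So the rescaled domains lie in no $H\times E$ with $E$ close to $D_2$, and the paper's claim $\overline{\Sigma}_j\to\overline{\mathbb{D}}\times\overline{D_2}$ fails for this domain. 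Whatever replaces your lemma must control $\Omega$ globally, not only its blow-up limit. One way is to correct the second coordinate holomorphically in $z_1$; in this example the shear $z_2\mapsto z_2-z_1$ does it.
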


The proof of Theorem \ref{main} separates into the singular part and the smooth part (see Theorem \ref{ls} and Theorem \ref{sp}). The main strategy is to rescale the domain so that it converges to a model domain, and then to construct a holomorphic embedding from the rescaled domain into the bidisk $\mathbb{D}^2$. 
\begin{rmk}

       Let $\Omega\subset\mathbb{C}^n$ be a bounded generic convex domain with piecewise Levi flat boundary, if $p\in\partial\Omega$ is a singular point with index $n$ (see Section \ref{sec2}), the proof of Theorem \ref{ls} also yields that $\lim\limits_{z\rightarrow p}T_{\Omega}(z)=1$.

\end{rmk}

The proof of Theorem \ref{sp} yields the following example.

\begin{ex}
    Let $\mathbb{B}^2$ be the unit ball in $\mathbb{C}^2$, and $E=\left\{(z,w)\in\mathbb{C}^2:\operatorname{Im}z>0\right\}$. If $\Omega=\mathbb{B}^2\cap E$, then for each point $p\in\partial E\cap\mathbb{B}^2$, we have
    \[
    \lim_{\Omega\ni z\rightarrow p}T_{\Omega}(z)=1.
    \]
\end{ex}

\begin{rmk}$\hfill$
\begin{enumerate}
    \item For each $n\geq2$. Forn\ae ss and Wold \cite{forw} constructed a bounded convex domain $\Omega\subset\mathbb{C}^n$ with $C^2$-smooth boundary that is Levi flat at some boundary point $p$ and satisfies $\lim\limits_{z\rightarrow p}S_{\Omega}(z)=1$. Then Lemma \ref{gap} implies that 
   \[
   \limsup\limits_{z\rightarrow p}T_{\Omega}(z)<1
   \]
   for such $p$.

    \item Let $E:=\mathbb{D}\times\mathbb{B}^2$, where $\mathbb{D}$ and $\mathbb{B}^2$ are the unit disk and unit ball of $\mathbb{C}$ and $\mathbb{C}^2$, then $\partial E$ is Levi flat on $\partial\mathbb{D}\times\mathbb{B}^2$. However, for each $p\in\partial\mathbb{D}\times\mathbb{B}^2$, we have 
       \[
       \limsup_{z\rightarrow p}T_{E}(z)=c<1,
       \]
       since $E$ is homogeneous and $E$ can not be biholomorphic to the three-dimensional polydisk $\mathbb{D}^3$.
\end{enumerate}
   
\end{rmk}

As an application of Theorem \ref{main},  we can give an alternative proof of \cite[Theorem 1]{pf}, which is stated as follows.
\begin{cor}\label{lfb}
 Let $\Omega\subset\mathbb{C}^2$ be a  bounded generic convex domain with piecewise Levi flat boundary. If $\operatorname{Aut}(\Omega)$ is non-compact, then $\Omega$ is biholomorphic to the bi-disk in $\mathbb{C}^2$.
\end{cor}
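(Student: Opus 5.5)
The idea is to combine Theorem \ref{main} with the standard principle that a squeezing-type function equal to $1$ at an interior point forces the domain to be biholomorphic to the model domain.

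\textbf{Step 1: produce an orbit accumulating at the boundary.} Since $\Omega$ is bounded, $\operatorname{Aut}(\Omega)$ is a Lie group acting properly on $\Omega$ (H. Cartan). If it is non-compact, the orbit of any interior point is not relatively compact in $\Omega$. Fix $z_0\in\Omega$. There is then a sequence $\varphi_j\in\operatorname{Aut}(\Omega)$ with $\varphi_j(z_0)\to p$ for some $p\in\partial\Omega$, after passing to a subsequence and using boundedness of $\overline{\Omega}$.

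\textbf{Step 2: apply Theorem \ref{main} and invariance.} The squeezing function $T_\Omega$ corresponding to the polydisk is a biholomorphic invariant, so $T_\Omega(\varphi_j(z_0))=T_\Omega(z_0)$. By Theorem \ref{main}, $T_\Omega(\varphi_j(z_0))\to 1$. Hence $T_\Omega(z_0)=1$.

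\textbf{Step 3: show that the value $1$ forces biholomorphism to $\mathbb{D}^2$.} This step requires an actual normal-families argument. By definition, for each $k$ there is an injective holomorphic $f_k\colon\Omega\to\mathbb{D}^2$ with $f_k(z_0)=0$ and $r_k\mathbb{D}^2\subset f_k(\Omega)$, where $r_k\to 1$.
\begin{enumerate}
\item By Montel's theorem, a subsequence $f_k$ converges locally uniformly to some $f\colon\Omega\to\overline{\mathbb{D}^2}$ with $f(z_0)=0$; the maximum principle then gives $f(\Omega)\subset\mathbb{D}^2$.
\item The inverses $g_k=f_k^{-1}\colon r_k\mathbb{D}^2\to\Omega$ are uniformly bounded, since $\Omega$ is bounded. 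A further subsequence therefore converges to $g\colon\mathbb{D}^2\to\overline{\Omega}$ with $g(0)=z_0$.
\item Choose the domains in order before composing. Fix a compact $L\subset\mathbb{D}^2$; then $L\subset r_k\mathbb{D}^2$ for all large $k$, so $g_k\to g$ uniformly on $L$. Because $g(0)=z_0$ is interior and $\Omega$ is convex, hence taut, a standard argument shows $g(\mathbb{D}^2)\subset\Omega$. Since $g$ is continuous on $L$, the image $g(L)$ is a compact subset of $\Omega$. Hence $f_k\to f$ uniformly on a compact neighborhood of $g(L)$, and $f_k\circ g_k\to f\circ g$ uniformly on $L$. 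Because $f_k\circ g_k=\mathrm{id}$ on $L$ for large $k$, we get $f\circ g=\mathrm{id}$ on $\mathbb{D}^2$.
\item For the other composition, fix a compact $K\subset\Omega$. Since $f(K)$ is a compact subset of $\mathbb{D}^2$, the same reasoning gives $g\circ f=\mathrm{id}$ on $K$, hence on all of $\Omega$.
\end{enumerate}
Therefore $f$ is a biholomorphism from $\Omega$ onto $\mathbb{D}^2$.

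If the paper records this statement, for instance alongside Lemma \ref{gap}, we simply cite it there.

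\textbf{Main obstacle.} The delicate point is Step 3, specifically proving $g(\mathbb{D}^2)\subset\Omega$ rather than merely $g(\mathbb{D}^2)\subset\overline{\Omega}$. I would handle it with the Jacobian argument. The determinant $\det Dg_k$ tends to $\det Dg$. The relation $f_k\circ g_k=\mathrm{id}$ near $0$, where both maps converge, gives $\det Dg(0)\neq0$. Hurwitz's theorem then makes $g$ injective with nonvanishing Jacobian, so $g$ is open. An open map sending $\mathbb{D}^2$ into $\overline{\Omega}$ must land in the interior of $\overline{\Omega}$, which equals $\Omega$ because $\Omega$ is convex.
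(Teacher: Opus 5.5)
Your proposal is correct and follows the paper's intended route. An orbit $\varphi_j(z_0)$ accumulating at a boundary point, together with the automorphism invariance of $T_\Omega$ and Theorem \ref{main}, gives $T_\Omega(z_0)=1$, and this value forces $\Omega\cong\mathbb{D}^2$. The paper takes that last rigidity step for granted (as it also does in Lemma \ref{gap} and in the proof of Theorem \ref{fv}), whereas you prove it with a normal-families and Hurwitz argument, which is sound.
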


  Furthermore, we characterize Levi flatness of smooth boundary points for domains that are biholomorphic to a two-dimensional bounded generic convex domain with piecewise Levi flat boundary.
\begin{thm}\label{elf}
    Let $\Omega$ be a two dimensional bounded generic convex domain with piecewise Levi flat boundary, and $\Omega'$ be a bounded domain that is biholomorphic to $\Omega$.  Then  $\partial\Omega'$ is also Levi flat at every $C^2$-smooth point.
\end{thm}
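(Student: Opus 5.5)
Since the squeezing function $T$ is a biholomorphic invariant, Theorem \ref{main} passes to $\Omega'$. If $F:\Omega\to\Omega'$ is a biholomorphism, then $T_{\Omega'}(F(z))=T_\Omega(z)$. Because $\Omega$ and $\Omega'$ are bounded, $F$ is proper. Hence if $z'\to q\in\partial\Omega'$, then $z=F^{-1}(z')$ leaves every compact subset of $\Omega$, so $z$ accumulates only on $\partial\Omega$. Theorem \ref{main} gives $T_\Omega(z)\to1$ uniformly as $\operatorname{dist}(z,\partial\Omega)\to0$: otherwise a sequence $z_j\to p\in\partial\Omega$ with $T_\Omega(z_j)\le 1-\epsilon$ would contradict the theorem. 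Therefore
\[
\lim_{z'\to q}T_{\Omega'}(z')=1
\]
for every $q\in\partial\Omega'$, and the claim reduces to the following local statement.

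\emph{If $q\in\partial\Omega'$ is a $C^2$-smooth boundary point of a bounded domain $\Omega'\subset\mathbb{C}^2$ and $T_{\Omega'}(z')\to1$ as $z'\to q$, then the Levi form of $\partial\Omega'$ vanishes at $q$.}

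I would prove this local statement by contradiction. Suppose the Levi form at $q$ is nonzero. First, $\Omega'$ is pseudoconvex because it is biholomorphic to the convex domain $\Omega$. Hence the Levi form at $q$ is semidefinite, and a nonzero value makes $q$ strongly pseudoconvex.

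Near a strongly pseudoconvex $C^2$ point, the squeezing function $S_{\Omega'}$ (for the ball) tends to $1$. This is the standard result of Diederich--Fornæss--Wold and Deng--Guan--Zhang; its $C^2$ version follows from Pinchuk scaling and the stability of squeezing functions. Along the inner normal to $q$ we would then have both $S_{\Omega'}(z')\to1$ and $T_{\Omega'}(z')\to1$.

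The contradiction comes from Lemma \ref{gap}, which I expect asserts precisely that $S$ and $T$ cannot both be close to $1$ at the same point. Its plausible proof is a normal families argument. If $S(z_j)\to1$ and $T(z_j)\to1$, the squeezing embeddings produce limit maps showing that the domains pointed at $z_j$ converge to both $\mathbb{B}^2$ and $\mathbb{D}^2$. This would make $\mathbb{B}^2$ biholomorphic to $\mathbb{D}^2$, contradicting Poincaré's theorem. Remark (1) uses Lemma \ref{gap} in exactly this way, so I would invoke it directly.

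I expect the main obstacle to be justifying that $S_{\Omega'}\to1$ at a strongly pseudoconvex point when the boundary is only $C^2$ and only locally smooth. Globally $\Omega'$ need not be smooth or convex. I would localize first: Pinchuk's scaling sequence at a $C^2$ strongly pseudoconvex point converges in the local Hausdorff sense to the Siegel domain, which is biholomorphic to $\mathbb{B}^2$. Alternatively, I would avoid the squeezing function entirely and run the scaling directly with $T$. Applying Pinchuk scaling along a sequence $z'_j\to q$, the rescaled domains converge to the ball. The condition $T_{\Omega'}(z'_j)\to1$ provides embeddings $f_j:\Omega'\to\mathbb{D}^2$ whose images nearly fill $\mathbb{D}^2$. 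Composing these with the scaling maps and passing to a limit yields a biholomorphism between $\mathbb{B}^2$ and $\mathbb{D}^2$, which is again a contradiction.

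The delicate points in this direct argument are two. The first is nondegeneracy of the limit of the $f_j$; this follows from the normalization $f_j(z'_j)=0$ together with the Cauchy estimates for the inverse maps defined on $r_j\mathbb{D}^2$ with $r_j\to1$. The second is that the scaled limit of a localized piece of $\Omega'$ agrees with the limit of the whole domain; this follows from the localization property of Kobayashi-type invariants at a peak point, and a strongly pseudoconvex point is a local peak point.
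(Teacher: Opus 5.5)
Your proof is correct and is essentially the paper's argument: a non-Levi-flat $C^2$ point is strongly pseudoconvex, so $S\to1$ there; Theorem \ref{main} together with biholomorphic invariance gives $T\to1$ along the same sequence; and Lemma \ref{gap} yields the contradiction. The only difference is that you transfer $T$ to $\Omega'$, whereas the paper transfers $S$ back to the convex domain $\Omega$, where Lemma \ref{gap} is stated. On $\Omega'$ you therefore need either the invariance of $S+T$ under the biholomorphism, or your normal-families proof of the gap, which does work for arbitrary bounded domains.
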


To prove Theorem \ref{elf}, we will establish a gap result relating the squeezing functions $S_{\Omega}$ and $T_{\Omega}$ (see Lemma \ref{gap}). Moreover, Lemma \ref{gap} can imply the following corollary.

\begin{cor}\label{flatness}
    Let $\Omega$ be a bounded convex domain in $\mathbb{C}^2$. If $\partial\Omega$ is locally smooth near $p\in\partial\Omega$ and $\lim\limits_{z\rightarrow p}T_{\Omega}(z)=1$, then there exists a neighborhood $U$ of $p$ such that $\partial\Omega$ is Levi flat at every $q\in\partial\Omega\cap U$.
\end{cor}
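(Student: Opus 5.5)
We argue by contradiction, using the gap result Lemma \ref{gap} relating $S_{\Omega}$ and $T_{\Omega}$ together with the classical boundary behavior of the squeezing function $S_\Omega$ at strongly pseudoconvex points. We expect Lemma \ref{gap} to say, in some form, that on a bounded domain in $\mathbb{C}^2$ the two squeezing functions cannot both be close to $1$. That is, there is a universal $\epsilon_0>0$ such that $S_\Omega(z)>1-\epsilon_0$ forces $T_\Omega(z)\le 1-\epsilon_0$. The reason is that a domain squeezed between balls of radii close to $1$ and also between polydisks of ratio close to $1$ would, in the limit, make $\mathbb{B}^2$ and $\mathbb{D}^2$ biholomorphic, which is false.

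\textbf{Step 1 (choosing the neighborhood).} Since $\partial\Omega$ is smooth near $p$ and $\lim_{z\to p}T_\Omega(z)=1$, choose a neighborhood $U$ of $p$ with two properties: $\partial\Omega\cap U$ is $C^2$-smooth, and $T_\Omega>1-\epsilon_0$ on $\Omega\cap U$.

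\textbf{Step 2 (contradiction at a non-flat point).} Suppose some $q\in\partial\Omega\cap U$ is not Levi flat. Since $\Omega$ is convex, it is pseudoconvex, so in $\mathbb{C}^2$ the Levi form at $q$ is a nonnegative scalar on the one-dimensional complex tangent space. Non-flatness therefore means it is strictly positive, i.e.\ $q$ is a strongly pseudoconvex point. For bounded convex domains, the result of Kim--Zhang (or the localization theorem of Diederich--Fornæss--Wold for $C^2$ strongly pseudoconvex points of convex domains) gives $\lim_{z\to q}S_\Omega(z)=1$. Along a sequence $z_j\to q$ with $z_j\in\Omega\cap U$, we then have simultaneously $S_\Omega(z_j)\to1$ and $T_\Omega(z_j)>1-\epsilon_0$. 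This contradicts Lemma \ref{gap}. Hence every point of $\partial\Omega\cap U$ is Levi flat.

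\textbf{Main obstacle.} The delicate point is the step $S_\Omega\to1$ at a merely $C^2$ strongly pseudoconvex point, when $\partial\Omega$ is only locally smooth near $p$ rather than globally. Convexity rescues this: the squeezing-function localization for convex domains needs only local $C^2$ strong convexity, and a strongly pseudoconvex point of a convex domain can be made strongly convex after a local biholomorphic change of coordinates. If Lemma \ref{gap} is instead stated as an inequality of the form $S_\Omega(z)^a+T_\Omega(z)^b\le c<2$, the same argument applies verbatim.
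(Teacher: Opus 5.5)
Your proposal is correct and is essentially the paper's argument: a non-Levi-flat boundary point near $p$ would be strongly pseudoconvex (by convexity in $\mathbb{C}^2$), forcing $S_\Omega\to 1$ there while $T_\Omega$ is close to $1$ near $p$, which contradicts Lemma \ref{gap} ($S_\Omega+T_\Omega<2-\epsilon$). Your direct choice of $U$ with $T_\Omega>1-\epsilon/2$ on $\Omega\cap U$ is a cleaner packaging of the paper's sequence-based contradiction, and the paper likewise takes $\lim S_\Omega=1$ at strongly pseudoconvex points for granted, as you do.
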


For a bounded domain $\Omega\subset\mathbb{C}^n$ with $C^2$-smooth boundary, Zimmer \cite{finite} proved that: if $\Omega$ has a finite volume quotient with respect to either the Bergman volume, the K$\ddot{\operatorname{a}}$hler-Einstein volume, or the Kobayashi-Eisenman volume, then it is biholomorphic to the unit ball. In the setting of bounded convex domains, a $folklore$ conjecture asserts that a bounded convex domain that admits a finite volume quotient is biholomorphic to a bounded symmetric space(see \cite{lw} for more details). 
Motivated by the main theorem of \cite{pinchuk2}, we can pose the following problem.
\begin{pro}
  Let $\Omega\subset\mathbb{C}^n$ be a bounded generic convex domain with piecewise $C^2$-smooth boundary and $\Gamma\leq\operatorname{Aut}(\Omega)$ be a discrete group acting freely on $\Omega$. Suppose $\Gamma\setminus\Omega$ has a finite volume, is $\Omega$ biholomorphic to $\mathbb{B}^{n_1}\times\mathbb{B}^{n_2}\times\cdots\times\mathbb{B}^{n_i}$ for some positive integers $n_1,\cdots,n_i$ satisfying $n_1+\cdots n_i=n$? 
\end{pro}

After applying Theorem \ref{main}, we prove the following two-dimensional case of this problem.

\begin{thm}\label{fv}
    Let $\Omega\subset\mathbb{C}^2$ be a bounded generic convex domain with piecewise $C^2$-smooth boundary, and $\Gamma\leq\operatorname{Aut}(\Omega)$ be a discrete group acting freely on $\Omega$. If $\Gamma\setminus\Omega$ has finite volume with respect to either Bergman volume, the K\"{a}hler-Einstein volume, or the Kobayashi-Eisenman volume, then $\Omega$ is biholomorphic to the bidisk. 
\end{thm}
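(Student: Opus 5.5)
We follow Zimmer's strategy for finite volume quotients, with the polydisk squeezing function $T_\Omega$ playing the role that the ball squeezing function plays in the smooth case. The plan has two stages: first show that $\partial\Omega$ must be piecewise Levi flat, so that Theorem \ref{main} applies; then show that $T_\Omega$ is identically $1$.

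\textbf{Step 1: structure of the orbits.} Since $\Gamma\setminus\Omega$ has finite volume for one of the invariant volumes, which are all mutually comparable on a bounded convex domain, we can use the standard argument from \cite{finite}. If $\Gamma$ were cocompact, every boundary point would be an accumulation point of some orbit. In general, finite volume together with a thick--thin type argument for the Kobayashi metric gives a weaker statement: for a dense set of boundary points $p$ there are $z_0\in\Omega$ and $\gamma_j\in\Gamma$ with $\gamma_j z_0\to p$. More precisely, we argue that if an open boundary piece $U\cap\partial\Omega$ were not accumulated by $\Gamma$-orbits of points in a fixed compact set, then the convex cones over it would produce a region of infinite invariant volume in the quotient. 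This is the estimate used in \cite{finite}, and it carries over to convex domains since it only needs the Kobayashi--Eisenman volume to be comparable to the Euclidean volume divided by $\delta^{k}$ near a face.

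\textbf{Step 2: the boundary is piecewise Levi flat.} Take a $C^2$-smooth boundary point $p$ that is accumulated by an orbit. By Pinchuk's scaling and convexity (Frankel/Kim--Krantz), $\Omega$ is biholomorphic to a limit model domain, which is convex. If $\partial\Omega$ were not Levi flat near $p$, the squeezing function $S_\Omega$ would have to be handled as follows.
\begin{itemize}
\item If some point near $p$ is strongly pseudoconvex, hence accumulated by the dense set of orbit points, Rosay's theorem \cite{R79} gives $\Omega\cong\mathbb{B}^2$. Then $\partial\Omega$ is smooth at generic points, which contradicts genericity with at least two pieces, or else reduces to Zimmer's theorem. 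In that case $\Omega\cong\mathbb{B}^2$ is impossible because a piecewise generic boundary with a singular point is not biholomorphic to the ball, and we argue this via the corner point (the rescaling at a corner yields a product model).
\item Otherwise the Levi form is degenerate along an open piece, and in $\mathbb{C}^2$ that means Levi flat.
\end{itemize}
Hence every smooth piece is Levi flat, so $\Omega$ is a bounded generic convex domain with piecewise Levi flat boundary.

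\textbf{Step 3: conclude via Theorem \ref{main}.} Theorem \ref{main} now gives $T_\Omega(z)\to1$ as $z\to\partial\Omega$. Take an orbit $\gamma_j z_0\to p\in\partial\Omega$. Since $T_\Omega$ is a biholomorphic invariant, $T_\Omega(z_0)=T_\Omega(\gamma_jz_0)\to1$, so $T_\Omega(z_0)=1$. The squeezing function attains $1$ only for domains biholomorphic to the model, so $\Omega\cong\mathbb{D}^2$. Alternatively, $\operatorname{Aut}(\Omega)$ is non-compact, because $\Gamma$ is an infinite discrete group (a bounded domain has infinite volume), and Corollary \ref{lfb} applies.

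\textbf{Main obstacle.} The hardest part is Step 2: ruling out the case where the boundary pieces are not Levi flat, in particular strongly pseudoconvex pieces. The first thing to try is to show that orbits accumulate at singular (corner) points, using that finite volume forces accumulation on a dense set which meets the corner set, since corners are not avoidable by a thick--thin argument. Rescaling at a corner, as in Pinchuk \cite{pinchuk} and Kodama \cite{K86}, yields a product model $\mathbb{B}^1\times\mathbb{B}^1$ or a product involving the ball. Comparing with the ball alternative then forces each piece to be Levi flat, or gives the bidisk directly.
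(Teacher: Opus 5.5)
\textbf{Verdict: genuine gap in Step 1.} Step 2, and hence the whole proof, depends on a density claim that you have not proved.

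Your Step 3 is fine. It is a streamlined version of the paper's endgame. Once Theorem \ref{main} gives $T_\Omega\to1$ at \emph{every} boundary point, any orbit accumulating anywhere on $\partial\Omega$ forces $T_\Omega\equiv1$. Also $\Gamma$ must be infinite, since otherwise $\operatorname{Vol}(\Gamma\setminus\Omega)=\operatorname{Vol}(\Omega)/|\Gamma|=\infty$.

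The gap is the claim that finite volume forces orbit accumulation at a dense set of boundary points. Your sketched volume argument fails. A region $R\subset\Omega$ of infinite invariant volume gives infinite volume in $\Gamma\setminus\Omega$ only if $\pi|_R$ has bounded multiplicity. That holds only in the thick part, where $\delta_n=\min_{\gamma\neq1}B_\Omega(p_n,\gamma p_n)$ stays bounded below along $p_n\to p$. In a thin (cusp-like) region with $\delta_n\to0$, $\pi$ folds $R$ onto itself infinitely often. Finite volume then says nothing about orbits accumulating at $U\cap\partial\Omega$. All you get is an element, or a divergent sequence of elements, of small displacement, and its orbits accumulate at \emph{some} boundary point that may lie far from $U$.

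This is also not the estimate of \cite{finite}. Zimmer's input is the lower bound $\operatorname{Vol}(\{z:B_\Omega(z_0,z)\le r\})\ge r^4/C$ of Lemma \ref{ve}, used through exactly this thick/thin dichotomy. The fallback in your last paragraph has the same defect. In addition, a dense subset of $\partial\Omega$ need not meet the corner set, which is nowhere dense in $\partial\Omega$.

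This is why the paper argues differently at the two places.
\begin{itemize}
\item To exclude a strongly pseudoconvex smooth point $p$, it uses no density statement. It runs Zimmer's argument from \cite{finite} locally at $p$, where $S_\Omega\to1$, to get $\Omega\cong\mathbb{B}^2$, and then invokes Pinchuk's inequivalence theorem \cite[Theorem 1.1]{pinchuk}. Your first bullet gestures at this, but ``contradicts genericity with at least two pieces'' is not an argument. Here the thin case is a real obstacle: $S_\Omega\to1$ is only known near strongly pseudoconvex points, so accumulation at an arbitrary boundary point is useless.
\item In the final step, the paper's Cases 2(a) and 2(b) only produce accumulation at some boundary point. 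They use discreteness, and Lemma \ref{fix} together with freeness of the action to get a divergent orbit. That suffices there because $T_\Omega\to1$ everywhere on $\partial\Omega$.
\end{itemize}

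To repair your proof, replace Step 1 and the first bullet of Step 2 by that localized argument at a strongly pseudoconvex point.
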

\smallskip

We now turn to the characterization of Teichm\"{u}ller space within the class of bounded domains. Let $S$ be a compact surface with genus $g\geq2$, and $\mathcal{T}_g$ be the \T\ space associated to $S$. It's well known that any $\mathcal{T}_g$ can not be biholormophic to a bounded domain in $\mathbb{C}^n$ with $C^2$-smooth boundary. One can refer \cite{finite,tei} for more details. In the setting of bounded generic domains with piecewise $C^2$-smooth boundary, we can prove the following theorem.

\begin{thm}\label{te}
   Any Teichm\"{u}ller space $\mathcal{T}_g$ with $g\geq2$ can not be biholomorphic to a bounded generic domain with piecewise $C^2$-smooth boundary.
\end{thm}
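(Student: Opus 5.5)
We argue by contradiction. Suppose $F:\mathcal{T}_g\to\Omega$ is a biholomorphism onto a bounded generic domain $\Omega\subset\mathbb{C}^n$ ($n=3g-3$) with piecewise $C^2$-smooth boundary. The plan is to combine two pieces of structure: the action of the mapping class group $\operatorname{Mod}_g$, and the boundary geometry of piecewise smooth generic domains.

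First, $\operatorname{Mod}_g$ acts properly discontinuously on $\mathcal{T}_g$ by biholomorphisms. It contains a torsion-free finite-index subgroup $\Gamma$ (for instance a level-$m$ congruence subgroup with $m\ge 3$), which therefore acts freely. The quotient $\Gamma\setminus\mathcal{T}_g$ is a finite cover of moduli space, and by results of McMullen and Liu--Sun--Yau its Kobayashi--Eisenman, Bergman and Kähler--Einstein volumes are finite. Transporting $\Gamma$ by $F$ gives a discrete subgroup $F\Gamma F^{-1}\le\operatorname{Aut}(\Omega)$ acting freely with finite-volume quotient. In particular $\operatorname{Aut}(\Omega)$ is non-compact, so there is an orbit $\gamma_j(z_0)$ accumulating at some $p\in\partial\Omega$. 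More strongly, as in Zimmer's argument, finite volume forces orbits to accumulate at many boundary points.

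Second, we exploit the boundary. Since $\Omega$ is bounded with piecewise $C^2$ boundary, the farthest-point argument gives an open set of smooth boundary points that are strongly pseudoconvex. I would use the finite-volume hypothesis, via Zimmer's "recurrence" lemma for finite-volume quotients, to produce orbit accumulation at a strongly pseudoconvex $C^2$ point $p$. The ingredient is that a finite-volume quotient has, for almost every point, a sequence $\gamma_j$ with $\gamma_j z$ approaching any prescribed boundary region: the fundamental domain has finite volume while neighborhoods of strongly pseudoconvex points carry infinite volume. Once we have accumulation at a strongly pseudoconvex point, the Wong--Rosay theorem (in its local version by Pinchuk/Efimov) shows $\Omega\cong\mathbb{B}^n$. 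This is impossible: $\mathcal{T}_g$ is not homogeneous (Royden: $\operatorname{Aut}(\mathcal{T}_g)=\operatorname{Mod}_g$ acts properly discontinuously), or alternatively the Kobayashi metric of $\mathcal{T}_g$ is not Hermitian.

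An alternative route, closer to the paper's squeezing-function methods, uses the fact that the squeezing function $S_\Omega\to1$ at a strongly pseudoconvex point. Hence $\sup S_{\mathcal{T}_g}=1$ along some sequence. Because $S$ is a biholomorphic invariant and the orbit structure under $\operatorname{Mod}_g$ makes values recur on a compact-core/thin-part decomposition, one would derive $S\equiv1$ somewhere, and hence $\mathcal{T}_g\cong\mathbb{B}^n$. The same contradiction then follows. One could also use the known fact that the squeezing function of $\mathcal{T}_g$ is bounded away from $1$ in the thick part.

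The main obstacle is step two: ensuring that the accumulation happens at a strongly pseudoconvex smooth point rather than at a singular (corner) point or a weakly pseudoconvex point. I would first try Zimmer's volume argument. A neighborhood $U\cap\Omega$ of a strongly pseudoconvex point has infinite intrinsic volume, whereas $\Gamma\setminus\Omega$ has finite volume, so $U\cap\Omega$ must meet infinitely many translates of a compact set. This yields $\gamma_j z_0\to p'$ for points $p'$ arbitrarily close to the strongly pseudoconvex point. Since strong pseudoconvexity is an open condition on the smooth part, $p'$ is itself strongly pseudoconvex.
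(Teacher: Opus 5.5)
There is a genuine gap in your second step: a bounded generic domain with piecewise $C^2$-smooth boundary need not have a single strongly pseudoconvex boundary point. The bidisk $\mathbb{D}^2=\{|z_1|^2-1<0,\ |z_2|^2-1<0\}$ is generic. Its smooth boundary points lie on $\partial\mathbb{D}\times\mathbb{D}$ or $\mathbb{D}\times\partial\mathbb{D}$, which are Levi flat. The farthest point of $\overline{\mathbb{D}^2}$ from any centre lies on the torus $\partial\mathbb{D}\times\partial\mathbb{D}$, which is a corner. In general the farthest-point argument only gives a point $p$, typically of index $r\geq2$, at which $\Omega$ is locally squeezed into $\mathbb{D}^{r-1}\times\mathbb{B}^{n-r+1}$ touching at $g_p(p)=(1,\dots,1,0,\dots,0)$ \cite[Lemma 1.4]{pinchuk}. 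So Wong--Rosay is unavailable. What replaces it is Pinchuk's rescaling at such a corner \cite{pinchuk2}, which produces the Siegel-type model $W=\{\operatorname{Re}w_i+H_i(w')<0\}$. The contradiction then comes from $\operatorname{Aut}(W)$ being non-discrete while $\operatorname{Aut}(\mathcal{T}_g)$ is discrete. Your squeezing-function alternative has the same defect, since it starts from $S_\Omega\to1$ at a strongly pseudoconvex point. Its recurrence step is also unjustified, because the sequence may run into the thin part of moduli space, where no compactness is available.

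Second, even at the correct boundary point, finite volume does not give you orbit accumulation there. Infinite volume of $U\cap\Omega$ against finite volume of $\Gamma\setminus\Omega$ only shows that $\pi$ is not injective on $U\cap\Omega$. Since $\Gamma\setminus\mathcal{T}_g$ is non-compact, the volume near $p$ can be carried entirely by lifts of the thin part, so you get no fixed $z_0$ with $\gamma_jz_0\to p$. Zimmer's actual argument needs an injectivity-radius dichotomy. In the small-radius case, which really occurs here because Dehn twists about short curves have small displacement, it yields accumulation only at uncontrolled boundary points. That is harmless in the proof of Theorem \ref{fv}, where Theorem \ref{main} applies at every boundary point, but fatal here.

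The missing idea is the one the paper uses. Take $\psi$ to be the pluriharmonic distance to the real hyperplane $\operatorname{Re}z_1+\cdots+\operatorname{Re}z_r=r$, pulled back by $g_p$; the components $U_{\leq\delta}$ of its sublevel sets shrink to $p$. Then run the Gupta--Seshadri argument \cite[Proposition 3.1]{tei}. It uses the structure of $\mathcal{T}_g$ (points of $U_{\leq\delta}$ projecting to a fixed compact $K\subset\mathcal{M}_g$) to produce $f_j\in\operatorname{Aut}(\Omega)$ and $q\in\Omega$ with $f_j(q)\to p$. Only then does Pinchuk's lemma apply.
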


For non-smooth bounded domain, the main theorem in \cite{ct} states that any \T\ space $\mathcal{T}_g$ with $g\geq2$ can not be biholomorphic to a bounded convex domain. Later, Gupta and Seshadri \cite{tei} extended the result to the case of locally strictly convex domains. Moreover, they conjectured that local strict convexity can be relaxed to local convexity.  Recently, Liu and Wang \cite{LW21} proved that such a domain can not even be locally log-type convex near a boundary point.
In this paper, by relaxing locally strict convexity to $C^2$-smoothness and local convexity at a boundary point, we obtain the following theorem.

\begin{thm}\label{levi}
    Any Teichm\"{u}ller space $\mathcal{T}_g$ with $g\geq2$ can not be biholomorphic to a bounded domain which is $C^2$-smooth and convex at some boundary point $p\in\partial\Omega$.
\end{thm}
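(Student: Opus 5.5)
Suppose, for contradiction, that $F:\mathcal{T}_g\to\Omega$ is a biholomorphism onto a bounded domain $\Omega\subset\mathbb{C}^n$ ($n=3g-3$) that is $C^2$-smooth and convex near $p\in\partial\Omega$. The plan is to produce an orbit of the mapping class group $\mathrm{Mod}_g$ accumulating at $p$, rescale along this orbit, and obtain a contradiction with the known rigidity of $\mathcal{T}_g$.

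\emph{Step 1: an orbit accumulating at $p$.} The group $\mathrm{Mod}_g$ acts properly discontinuously and cocompactly up to finite volume on $\mathcal{T}_g$, and the quotient is not compact. The more useful fact, as in \cite{finite,tei}, is that $\mathcal{T}_g$ has a compact subset $K$ whose $\mathrm{Aut}$-orbit is ``thick''. For any sequence $z_j\in\Omega$ with $z_j\to p$, the Kobayashi distance from $z_j$ to $F(\mathrm{Mod}_g\cdot K_0)$ stays bounded, where $K_0$ is a compact set covering the thick part. This uses the Mumford compactness of the thick part of moduli space together with a bounded-distance statement for the thin part. An alternative is to use the Kobayashi-hyperbolic rescaling of Liu--Wang \cite{LW21}. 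Choosing $z_j\to p$ along the inner normal, and using that the Kobayashi metric near a convex $C^2$ point is comparable to the distance to the boundary, I would obtain $\phi_j\in\mathrm{Mod}_g$ and $x_j\in K_0$ with $F(\phi_j x_j)$ tending to $p$ nontangentially. After passing to a subsequence, $x_j\to x_0$.

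\emph{Step 2: rescaling at the convex point.} Since $\partial\Omega$ is $C^2$ and convex near $p$, I would apply Pinchuk/Frankel-type affine rescaling maps $A_j$ centered at the nearest boundary points to $q_j=F(\phi_j x_0)$. By local convexity the domains $A_j(\Omega\cap U)$ converge in the local Hausdorff sense to a convex model domain $D$. If the Levi form is nondegenerate in some direction, $D$ is a non-ball convex domain containing no complex lines in the relevant directions, hence Kobayashi hyperbolic. Localization of the Kobayashi metric at convex $C^2$ boundary points ensures that $A_j\circ F\circ\phi_j:\mathcal{T}_g\to A_j(\Omega)$ is a normal family and converges to a holomorphic map $\Psi:\mathcal{T}_g\to D$. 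The standard argument (Frankel, Kim--Krantz) shows $\Psi$ is a biholomorphism onto $D$, so $\mathcal{T}_g$ is biholomorphic to a convex domain.

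\emph{Step 3: contradiction.} A convex hyperbolic domain is impossible by \cite{ct}, which states that $\mathcal{T}_g$ is not biholomorphic to any bounded convex domain. The unbounded convex hyperbolic case can be reduced to this using the unbounded version of the same argument. Alternatively, the rescaled limit has a noncompact automorphism group containing translations, since it is a tube-type limit. This contradicts Royden's theorem that $\mathrm{Aut}(\mathcal{T}_g)=\mathrm{Mod}_g$ (mod center) is discrete.

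The main obstacle is Step 1 together with nondegeneracy in Step 2. The quotient $\mathcal{T}_g/\mathrm{Mod}_g$ is not compact, so an arbitrary sequence tending to $p$ need not stay at bounded distance from a fixed orbit. I would handle this by choosing the sequence $z_j$ cleverly: I would take an orbit $\phi_j(x_0)$ of a pseudo-Anosov element, whose image must accumulate at some boundary point, and then argue that the accumulation set meets the $C^2$ convex part near $p$. A cleaner alternative is to use Gupta--Seshadri's ``squeezing-type'' argument, which requires only one orbit accumulating at a locally convex $C^2$ point. For that I would invoke ergodicity/density of the $\mathrm{Mod}_g$ action on a full-measure set of boundary directions, as in \cite{tei}.
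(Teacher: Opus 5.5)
There is a genuine gap, and it is in Step~1, which is the heart of the theorem: you never give a working mechanism for it. Your first claim is false: you assert that for every sequence $z_j\to p$ the Kobayashi distance from $z_j$ to $F(\mathrm{Mod}_g\cdot K_0)$ stays bounded. By Royden, the Kobayashi metric on $\mathcal{T}_g$ is the Teichm\"{u}ller metric. A point at which some curve has hyperbolic length $\ell$ lies at distance roughly $\tfrac12\log(1/\ell)$ from the thick part. So there is no bounded-distance statement for the thin part, as you concede in your last paragraph.

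Your proposed repairs do not close this. A pseudo-Anosov orbit $\phi^n(x_0)$ accumulates somewhere on $\partial\Omega$, but nothing forces the accumulation to land in the $C^2$ convex patch near $p$. Ergodicity or density statements for $\mathrm{Mod}_g$ (on $\mathcal{PMF}$ or on the unit tangent bundle) say nothing about the Euclidean boundary of $\Omega$, since $F$ has no controlled boundary behaviour. Without some such input, Steps~2 and~3 have nothing to act on.

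The missing idea, which the paper takes from \cite[Proposition 3.10]{tei}, is a pluriharmonic function adapted to the convex point.
\begin{itemize}
\item Let $V_p$ be a supporting real hyperplane at $p$ for the convex set $U\cap\Omega$, and let $h$ be the Euclidean distance to $V_p$.
\item Then $h$ is the real part of a complex affine function, hence pluriharmonic; $h>0$ on $U\cap\Omega$, and $h(q)=0$ forces $q\in\partial\Omega$.
\item Let $U_{\le\delta}$ be the connected component of $\{h\le\delta\}$ containing $p$, and take $\delta_n\to0$. The Gupta--Seshadri argument, which is exactly where pluriharmonicity is used, shows that each $U_{\le\delta_n}$ contains a point $p_n$ projecting into a fixed compact set $K\subset\mathcal{M}_g$.
\item Any limit $q$ of the $p_n$ satisfies $h(q)=0$, so $q\in\partial\Omega\cap U$ is an accumulation point of an $\operatorname{Aut}(\Omega)$-orbit.
\end{itemize}
This gives accumulation at some $q$ near $p$, not at $p$ itself. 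That is all one needs, because $C^2$-smoothness and local convexity persist near $q$; insisting on $p$ itself, as you do, makes the problem harder for no gain.

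From there the paper simply invokes Theorem~\ref{one} (\cite[Proposition A.6]{tei}) to get a non-compact one-parameter subgroup, contradicting discreteness of $\operatorname{Aut}(\mathcal{T}_g)$. Your Steps~2--3 are an unpacked version of that proposition. Ending with \cite{ct} instead of discreteness is also acceptable, via Lempert's theorem for convex domains without complex lines. Your case split on Levi nondegeneracy is unnecessary: for a locally convex point, the affine normalization of Theorem~\ref{com} always yields a limit in $\mathbb{X}_n$, which contains no complex affine line.
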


\begin{defn}
Let $\Omega$ be a domain in $\mathbb{C}^n$. A boundary point $p$ is said $local\ peak$ if there exists a neighborhood $U$ of $p$ and a holomorphic map $f:U\rightarrow\mathbb{C}$ such that $|f(z)|<1$ whenever $z\in U\cap\overline{\Omega}\setminus\{p\}$ and $f(p)=1$.  
\end{defn}
If we relax the $C^2$-smoothness assumption in Theorem \ref{levi} to Alexandrov smoothness (see Section \ref{sec5}), we obtain the following result.
\begin{thm}\label{peak}
    Any  Teichm$\ddot{{u}}$ller space $\mathcal{T}_{g}$ with $g\geq2$ can not be biholomorphic to a bounded domain that possesses a local peak point $p$ which is an Alexandrov smooth point.
\end{thm}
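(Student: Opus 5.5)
Suppose, for contradiction, that $F:\mathcal{T}_g\to\Omega$ is a biholomorphism onto a bounded domain $\Omega$, and that $p\in\partial\Omega$ is a local peak point which is also an Alexandrov smooth point. The plan is to use the large modular group: the mapping class group $\operatorname{Mod}_g$ acts properly and cocompactly modulo thick parts on $\mathcal{T}_g$. The goal is to produce a sequence of automorphisms of $\Omega$ whose orbit accumulates at $p$, rescale along it, and compare the resulting limit geometry with known rigidity of $\mathcal{T}_g$.

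\textbf{Step 1: orbit accumulation.} Pick a sequence $z_k\in\Omega$ with $z_k\to p$. The Kobayashi distance on $\mathcal{T}_g$ is the Teichm\"uller metric, and by Mumford compactness the thick part modulo $\operatorname{Mod}_g$ is compact. I would split into two cases.

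In the first case, $F^{-1}(z_k)$ stays in a thick part. Then there are $\gamma_k\in\operatorname{Mod}_g$ and a compact set $K\subset\mathcal{T}_g$ with $\gamma_k F^{-1}(z_k)\in K$. So $p$ is an orbit accumulation point for the automorphisms $\phi_k=F\gamma_k^{-1}F^{-1}$, and a subsequence of $\phi_k$ converges uniformly on compacta to a map into $\partial\Omega$.

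Here the local peak function $f$ enters. I would use it, through the standard localization argument (Berteloot/Gaussier style, using the peak function to show that Kobayashi balls around points near $p$ shrink into $U$), to show that $\phi_k$ converges to the constant $p$ on compacta.

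In the second case, $F^{-1}(z_k)$ escapes into the thin part. Here I would instead use the product structure near multicurve degenerations, and Minsky's product region theorem, to find automorphisms moving points back toward a controlled region. I expect this case to be the main obstacle.

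\textbf{Step 2: rescaling at an Alexandrov smooth point.} Alexandrov smoothness at $p$ means that $\partial\Omega$ near $p$ has a unique supporting tangent hyperplane and admits a second-order Taylor-type expansion, in the sense of Section~\ref{sec5}. I would apply a Pinchuk-type scaling, affine in the complex normal direction and adapted to this quadratic expansion. Combined with the peak-point localization, this should make the rescaled domains $A_k(\Omega)$ converge in the local Hausdorff sense to a model domain $D$. I expect $D$ to be contained in a half-space, $\{\operatorname{Re} w<Q(z')\}$-type. Since $\mathcal{T}_g$ is Kobayashi complete, and hence taut, the maps $A_k\circ\phi_k\circ F$ should converge to a holomorphic embedding of $\mathcal{T}_g$ into $D$. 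If $D$ is taut, the limit is a biholomorphism, and so $\mathcal{T}_g\cong D$.

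\textbf{Step 3: contradiction.} The model domain $D$ is invariant under real translations in the normal direction $w\mapsto w+it$, so it has a one-parameter group of automorphisms. But $\operatorname{Aut}(\mathcal{T}_g)=\operatorname{Mod}_g$ (modulo the hyperelliptic involution for $g=2$) is discrete by Royden's theorem. This is a contradiction.

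The hard points are the thin-part case in Step 1, and showing in Step 2 that the limit domain is taut and the limit map is biholomorphic under only Alexandrov (rather than $C^2$) smoothness. For the second, I would try the peak function to get tautness of $D$, by transporting $f$ through the scaling. If that fails, I would fall back on the argument of Theorem~\ref{levi}, replacing the $C^2$ expansion by Alexandrov's almost-everywhere second-order expansion at $p$.
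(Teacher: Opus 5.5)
Your Step 1 has a genuine gap, and it is exactly where the peak hypothesis should be used.

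In the thin case you hope to use Minsky's product regions to find automorphisms moving $F^{-1}(z_k)$ back toward a controlled region. This cannot work. Elements of $\operatorname{Mod}_g$ act by isometries of the \T\ ($=$ Kobayashi) metric. So if the images of $F^{-1}(z_k)$ in $\mathcal{M}_g$ leave every compact set, then $F^{-1}(z_k)$ is at unbounded distance from $\operatorname{Mod}_g\cdot K$ for every compact $K\subset\mathcal{T}_g$. No sequence of automorphisms can then bring these points into a fixed compact set. An arbitrary sequence $z_k\to p$ is the wrong object; you must instead \emph{produce} a sequence of thick points converging to $p$.

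This is what the local peak function $g$ does in the paper:
\begin{enumerate}
\item $h=1-\operatorname{Re} g$ is pluriharmonic on $U\cap\Omega$, vanishes at $p$, and is positive on $(U\cap\overline{\Omega})\setminus\{p\}$.
\item Hence the components $U_{\le\delta}$ of $\{h\le\delta\}$ containing $p$ shrink to $\{p\}$ in the Hausdorff topology.
\item The sublevel-set argument of Gupta--Seshadri (cf.\ \cite[Proposition 3.1]{tei}) then yields, for each $\delta$, a point of $U_{\le\delta}\cap\Omega$ whose image in $\mathcal{M}_g$ lies in a fixed compact set.
\end{enumerate}
That reduces everything to your Case 1 and shows $p$ is an orbit accumulation point; this is the Proposition stated after the proof of Theorem \ref{te}.

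In your proposal the peak function is used only to localize the Kobayashi metric, which is not where it is indispensable. Orbit accumulation in Case 1 already follows from the Kobayashi distance bound. Your fallback to the argument of Theorem \ref{levi} does not rescue this step either. There the distance to the supporting hyperplane $V_p$ has sublevel sets shrinking only to the face $\partial\Omega\cap V_p$, so the accumulation point is some $q$ in that face. Here you have regularity only at $p$, and the peak function is what forces the accumulation point to be $p$ itself.

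Once orbit accumulation at $p$ is known, the paper does not rescale. Alexandrov smoothness, as defined in the paper, already includes local convexity. So Theorem \ref{one} (\cite[Proposition A.6]{tei}) directly gives a non-compact one-parameter subgroup of $\operatorname{Aut}(\Omega)$, contradicting the discreteness of $\operatorname{Aut}(\mathcal{T}_g)$. That is the same contradiction as your Step 3.

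Your Steps 2--3 amount to reproving that proposition, and as sketched they are not reliable:
\begin{enumerate}
\item A scaling adapted to the quadratic form $H$ in the Alexandrov expansion gives a limit containing complex lines, hence not taut, whenever $H$ vanishes on a complex line in the complex tangent space.
\item The second-order expansion is available only at $p$, so you would also have to control how the orbit approaches $p$.
\end{enumerate}
Citing Theorem \ref{one} avoids both issues.
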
 

This paper is organized as follows. We give the preliminaries in Section \ref{sec2}. Section \ref{sec3} is denoted to introduce properties of space of convex domains. In Section \ref{sec4}, we study the geometry of bounded generic convex domains. The theorems about biholomorphism-type of \T\ space are proved in Section \ref{sec5}.

\section{Preliminaries}\label{sec2}
\subsection{Notations}

\begin{enumerate}
    \item $|\cdot|$ is denoted the standard Euclidean norm in $\mathbb{C}^n$.

    \item $\mathbb{D}$ is the unit disk in $\mathbb{C}$, $\mathbb{B}^n$ and $\mathbb{D}^n$ are the unit ball and unit polydisk in $\mathbb{C}^n$ for $n\geq2$ respectively.

    \item For each $n\geq1$, $B_r(z):=\{w\in\mathbb{C}^n:|z-w|<r\}$ and $D_r(z):=\{w\in\mathbb{C}^n:|z_i-w_i|<r,i=1,\cdots,n\ \operatorname{with}\ z=(z_1,\cdots,z_n)\}$.
    
    \item For a bounded pseudoconvex domain $\Omega$, let $k_{\Omega},g_B$ and $g_{KE}$ denote the Kobayashi metric, the Bergman metric and the \K-Einstein metric on $\Omega$ respectively.
\end{enumerate}

\subsection{Piecewise smooth boundary}$\hfill$

Following \cite{pinchuk}, a bounded domain $\Omega\subset\mathbb{C}^n$ has $piecewise\ C^r$-$smooth\ boundary$ if there exists a neighborhood $U$ of $\Omega$ and $C^r$-smooth functions $\rho_1,\cdots,\rho_k:U\rightarrow\mathbb{R}$ such that
\[
\Omega=\{z\in U:\rho_1(z)<0,\cdots,\rho_k(z)<0\}.
\]
And $\Omega$ is called $generic$ if for any possible distinct indices $i_1,\cdots,i_l$, the following
\[
d\rho_{i_1}\wedge d\rho_{i_2}\cdots\wedge d\rho_{i_{l}}\neq0
\]
holds whenever $\rho_{i_1}(z)=\cdots=\rho_{i_l}(z)=0$. 

Let $\rho:\mathbb{C}^n\rightarrow\mathbb{R}$ be a $C^2$-smooth function such that 
\[
\operatorname{grad}\rho(p)\neq0\ \operatorname{with}\ \rho(p)=0.
\]
We call the hypersurface $H=\{z\in\mathbb{C}^n:\rho(z)=0\}$ is $Levi\ flat$ at $p\in H$ if
\[
\sum_{i,j=1}^n\frac{\partial^2\rho}{\partial z_i\partial\overline{z}_j}(p)w_i\overline{w}_j=0
\]
for any $w=(w_1,\cdots,w_n)$ whenever
\[
\sum_{i=1}^n\frac{\partial\rho}{\partial z_i}(p)w_i=0.
\]
Moreover, $H$ is called Levi flat, if it is Levi flat at every point. For a generic domain with piecewise $C^2$-smooth boundary  $\Omega=\{z\in U:\rho_1(z)<0,\cdots,\rho_k(z)<0\}$, its boundary $\partial\Omega$ is called $piecewise\ Levi\ flat$ if the hypersurface $\{z\in U:\rho_i(z)=0\}$ is Levi flat for each $i=1,\cdots,k$.

\begin{defn}
	Suppose $\Omega=\{z\in U:\rho_1(z)<0,\cdots,\rho_k(z)<0\}$ is a bounded generic domain with piecewise $C^2$-smooth boundary , and $p$ is a boundary point of $\Omega$. The $index$ of $p$ is defined by
	\[
	r_p:=\#\{i:\rho_i(p)=0\}.
	\]
	A boundary point $p$ is a smooth point if $r_p=1$, and is a singular point otherwise.
\end{defn}

\subsection{Squeezing function}\hfill

In this subsection, we will introduce some properties of squeezing function $S_{\Omega}$ and squeezing function corresponding to polydisk $T_{\Omega}$. More details about $S_{\Omega}$ and $T_{\Omega}$ can be found in \cite{sfp,sque}.
The $squeezing$\ $function$ of a bounded domain $\Omega\subset\mathbb{C}^n$ is defined by 

\begin{align*}
S_{\Omega}(z):=\sup\{r:&\operatorname{there\ exists\ a\ 1-1\ holomorphic\ mapping} f:\Omega\rightarrow\mathbb{B}^n\\
 &\operatorname{with} f(z)=0 \operatorname{and} B_r(0)\subset f(\Omega)\}.
\end{align*}

\begin{rmk}\label{us}
    For a bounded convex dmoain $\Omega$, \cite[Theorem 1.1]{kz} implies that there exists a constant $c>0$ such that $S_{\Omega}(z)\geq c$ for every $z\in\Omega$.
\end{rmk}

\smallskip

 As an analogy, Gupta and Pant defined the squeezing function corresponding to polydisk in \cite{sfp} as follows.

 \begin{defn}
     Let $\Omega\subset\mathbb{C}^n$ be a bounded domain, the squeezing function corresponding to polydisk is defined by
    \begin{align*}
T_{\Omega}(z):=\sup\{r:&\operatorname{there\ exists\ a\ 1-1\ holomorphic\ mapping} f:\Omega\rightarrow\mathbb{D}^n\\
 &\operatorname{with} f(z)=0 \operatorname{and} D_r(0)\subset f(\Omega)\}.
\end{align*}

 \end{defn}

\subsection{Kernel convergence}$\hfill$

Let $\Omega\subsetneq\mathbb{C}$ be a simply connected domain with a marked point $w$, and let $\Omega_n\subsetneq\mathbb{C}$ be a sequence of simply connected domains with marked points $w_n$. The notion of $Carath\acute{e}odory\ convergence$ is as follows. More details about Carath$\acute{\operatorname{e}}$odory\ convergence can be founded in \cite{cc}.
\begin{defn}
    We say that $(\Omega_n,w_n)$ converges to $(\Omega,w)$ in the $Carath\acute{e}odory\ sense$ if the following holds
    \begin{enumerate}
        \item $w_n\rightarrow w$,

        \item any compact set $K\subset\Omega$ is contained in each $\Omega_n$ for large $n$,

        \item for any open connected $U\ni w$, if $U$ is contained in infinitely many $n$, then $U\subset\Omega$.
    \end{enumerate}
\end{defn}

Based on the notion of Carath$\acute{\operatorname{e}}$odory convergence, we have the following theorem.

\begin{thm}
    Let $(\Omega,w)$ and $(\Omega_n,w_n)$ be as above. Consider the conformal  Riemann parametrizations $\varphi_n:(\mathbb{D},0)\rightarrow(\Omega_n,w_n)$ with $\varphi_n'(0)>0$ and $\varphi:(\mathbb{D},0)\rightarrow(\Omega,w)$ with $\varphi'(0)>0$. Then the Carath$\acute{\operatorname{e}}$odory convergence of $(\Omega_n,w_n)$ to $(\Omega,w)$ is equivalent to  the uniform convergence of $\varphi_n$ to $\varphi$ on compact sets of $\mathbb{D}$.
\end{thm}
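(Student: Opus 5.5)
This is the classical Carathéodory kernel theorem. I would prove the two implications separately, relying on normal families, the Koebe distortion and $1/4$ theorems, and Hurwitz's theorem.

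\emph{Carathéodory convergence implies locally uniform convergence.} First I need two-sided bounds on $\varphi_n'(0)$.
\begin{itemize}
\item \textbf{Lower bound.} Choose $r>0$ with $\overline{B_{2r}(w)}\subset\Omega$. By condition (2), $B_r(w_n)\subset\Omega_n$ for large $n$. Schwarz's lemma applied to $\varphi_n^{-1}$ then gives $\varphi_n'(0)\ge r$.
\item \textbf{Upper bound.} By Koebe's $1/4$ theorem, $\varphi_n(\mathbb{D})$ contains $B_{\varphi_n'(0)/4}(w_n)$. If $\varphi_n'(0)\to\infty$ along a subsequence, these balls eventually contain any disk $U$ about $w$. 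Condition (3) then puts every such disk in $\Omega$, so $\Omega=\mathbb{C}$, which contradicts $\Omega\subsetneq\mathbb{C}$.
\end{itemize}
With these bounds, the Koebe distortion theorem makes $\{\varphi_n\}$ locally uniformly bounded, so it is a normal family. Let $\psi$ be the limit of a convergent subsequence. Since $\psi'(0)\ge r>0$, Hurwitz's theorem shows $\psi$ is univalent, with $\psi(0)=w$ and $\psi'(0)>0$. It remains to show $\psi(\mathbb{D})=\Omega$:
\begin{itemize}
\item \textbf{$\psi(\mathbb{D})\subset\Omega$.} Any compact disk inside $\psi(\mathbb{D})$ lies in $\varphi_n(\mathbb{D})=\Omega_n$ for infinitely many $n$, by uniform convergence on a slightly smaller circle together with the argument principle. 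Condition (3) then gives $\psi(\mathbb{D})\subset\Omega$.
\item \textbf{$\Omega\subset\psi(\mathbb{D})$.} Take a compact connected $K\subset\Omega$ containing $w$. By condition (2), the maps $\varphi_n^{-1}|_K$ are defined for large $n$ and take values in $\mathbb{D}$. A subsequence converges to some $g:K^\circ\to\overline{\mathbb{D}}$ with $g(w)=0$, so by the maximum principle $g$ maps into $\mathbb{D}$. Passing to the limit in $\varphi_n\circ\varphi_n^{-1}=\mathrm{id}$ gives $\psi\circ g=\mathrm{id}$, hence $K^\circ\subset\psi(\mathbb{D})$.
\end{itemize}
The normalized Riemann map onto $\Omega$ is unique, so $\psi=\varphi$. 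Since every convergent subsequence has the same limit, the whole sequence converges to $\varphi$.

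\emph{Locally uniform convergence implies Carathéodory convergence.} Condition (1) is immediate from $w_n=\varphi_n(0)\to\varphi(0)=w$.
\begin{itemize}
\item \textbf{Condition (2).} A compact $K\subset\Omega$ equals $\varphi(\overline{D_\rho})$ minus some slack, for a suitable $\rho<1$. Uniform convergence on $|z|=\rho$ and the argument principle (Rouché's theorem) then give $K\subset\varphi_n(D_\rho)$ for large $n$.
\item \textbf{Condition (3).} Suppose $U\ni w$ is open and connected, and $U\subset\Omega_n$ for infinitely many $n$. Along that subsequence the maps $\varphi_n^{-1}|_U$ form a normal family, and a subsequence converges to some $h:U\to\overline{\mathbb{D}}$. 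Since $h(w)=0$, the open mapping theorem gives $h(U)\subset\mathbb{D}$. Passing to the limit gives $\varphi\circ h=\mathrm{id}_U$, so $U\subset\varphi(\mathbb{D})=\Omega$.
\end{itemize}

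The main obstacle is the pair of uniform bounds on $\varphi_n'(0)$, and especially ruling out $\varphi_n'(0)\to\infty$. This is the only place where the hypothesis $\Omega\neq\mathbb{C}$ enters, and everything else reduces to normality and Hurwitz's theorem. A second delicate point is that the whole-sequence limit is identified only through uniqueness of the normalized Riemann map.
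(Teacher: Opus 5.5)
The paper gives no proof of this statement. It quotes it as the classical Carath\'eodory kernel theorem with a pointer to \cite{cc}, so there is no argument of the authors to compare against. Your proposal is the standard textbook proof, and it is correct:
\begin{itemize}
\item Schwarz's lemma and Koebe's $1/4$ theorem give two-sided bounds on $\varphi_n'(0)$.
\item Koebe distortion and Montel give normality.
\item Hurwitz gives univalence of the limit.
\item Condition (3) and the identity $\varphi_n\circ\varphi_n^{-1}=\mathrm{id}$ identify the image.
\item Uniqueness of the normalized Riemann map gives convergence of the whole sequence.
\end{itemize}

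Three details need tightening when you write it out.
\begin{enumerate}
\item[(i)] Condition (3) only applies to connected open sets containing $w$, and an arbitrary compact disk in $\psi(\mathbb{D})$ need not contain $w$. Instead, take $\rho<\rho'<1$ and use Rouch\'e on $|z|=\rho'$ to show $\psi(\overline{D_\rho})\subset\Omega_{n_k}$ for all large $k$. Then apply (3) to $\psi(D_\rho)\ni w$ and let $\rho\to1$.
\item[(ii)] For $\Omega\subset\psi(\mathbb{D})$, a compact connected $K$ can have disconnected interior that misses $w$ (two closed disks tangent at $w$). That breaks both the normalization $g(w)=0$ and the maximum-principle step. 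Work instead on a connected open $V\ni w$ with $\overline{V}$ compact in $\Omega$; such sets exhaust $\Omega$.
\item[(iii)] $g(w)=0$ and $h(w)=0$ do not come from evaluating $\varphi_n^{-1}$ at $w$. They follow from $\varphi_n^{-1}(w_n)=0$, $w_n\to w$, and local uniform convergence.
\end{enumerate}

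Your argument also gives more than the statement. Since $\psi=\varphi$, $\psi\circ g=\mathrm{id}_V$, and $\psi$ is injective, every subsequential limit of $\varphi_n^{-1}$ on $V$ equals $\varphi^{-1}$. So you obtain the implication (1)$\Rightarrow$(2) of Proposition~\ref{kc} at no extra cost. That is the form the paper actually uses later, for the Riemann maps $g_k\colon E_k\to\mathbb{D}$.
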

Moreover, we have the following proposition.
\begin{proposition}\label{kc}
    Suppose $f_n=\varphi_n^{-1}$ and $f=\varphi^{-1}$. Then the following are equivalent:
    \begin{enumerate}
        \item $(\Omega_n,w_n)\rightarrow(\Omega,w)$ in the Carath$\acute{\operatorname{e}}$odory sense,

        \item for every compact subset $K\subset\Omega$, $K$ is compactly contained in each $\Omega_n$ for large $n$ and $f_n$ converges to $f$ uniformly on $K$.
    \end{enumerate}
\end{proposition}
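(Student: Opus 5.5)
This is the standard Carathéodory kernel theorem for inverse maps. The plan is to derive it from the preceding theorem, which says that Carathéodory convergence is equivalent to $\varphi_n\to\varphi$ locally uniformly on $\mathbb{D}$. The equivalence then reduces to passing between uniform convergence of $\varphi_n$ and uniform convergence of $f_n=\varphi_n^{-1}$ on compacta.

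\textbf{(1)$\Rightarrow$(2).} Assume $(\Omega_n,w_n)\to(\Omega,w)$ in the Carathéodory sense, so that $\varphi_n\to\varphi$ locally uniformly on $\mathbb{D}$.

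First we show that compact sets are eventually contained in $\Omega_n$. Fix a compact $K\subset\Omega$ and choose $r<1$ with $f(K)\subset D_{r'}(0)$ for some $r'<r$. Since $\varphi$ is univalent, $\delta:=\operatorname{dist}(\varphi(\{|\zeta|=r\}),K)>0$. For large $n$ we have $|\varphi_n-\varphi|<\delta/2$ on $|\zeta|=r$. For each $a\in K$, Rouché's theorem applied to $\varphi_n-a$ and $\varphi-a$ on $|\zeta|=r$ shows that $\varphi_n$ takes the value $a$ inside $D_r(0)$. Hence $K\subset\varphi_n(D_r(0))\Subset\Omega_n$, and moreover $f_n(K)\subset D_r(0)$.

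Next we show uniform convergence of $f_n$ on $K$, arguing by contradiction. Suppose there exist $a_n\in K$ with $|f_n(a_n)-f(a_n)|\geq\epsilon$ along a subsequence. Pass to a further subsequence with $a_n\to a\in K$ and $f_n(a_n)\to\zeta\in\overline{D_r(0)}$. By uniform convergence of $\varphi_n$ on $\overline{D_r(0)}$,
\[
a_n=\varphi_n(f_n(a_n))\to\varphi(\zeta),
\]
so $\varphi(\zeta)=a$ and hence $\zeta=f(a)$. On the other hand $f(a_n)\to f(a)$ by continuity of $f$, so $|f_n(a_n)-f(a_n)|\to0$, which is a contradiction.

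\textbf{(2)$\Rightarrow$(1).} Now assume (2). Condition (1) of the definition follows from $w=\varphi(0)$, $w_n=\varphi_n(0)$ together with local uniform convergence of $f_n\to f$ near $w$, although some care is needed because the relevant points move with $n$. It is cleaner to recover $\varphi_n\to\varphi$ directly and then invoke the preceding theorem, and this is the step I expect to be the main obstacle.

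The maps $\varphi_n$ are not a priori locally bounded, since the $\Omega_n$ need not be uniformly bounded. Instead, take a compact disk $\overline{D_\rho(0)}$ and set $L=\varphi(\overline{D_{\rho'}(0)})$ for some $\rho'>\rho$. By (2), $f_n\to f$ uniformly on $L$. By Rouché's theorem on $\partial L$, the image $f_n(\operatorname{int}L)$ eventually contains $\overline{D_\rho(0)}$, so $\varphi_n=(f_n|_{L})^{-1}$ there; in particular $\varphi_n(\overline{D_\rho(0)})\subset L$, which gives local boundedness. Now repeat the contradiction argument from the first direction with the roles of $f$ and $\varphi$ swapped. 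This shows $\varphi_n\to\varphi$ uniformly on $\overline{D_\rho(0)}$, and the preceding theorem then gives Carathéodory convergence.

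Since the statement is classical, an acceptable alternative is to cite the standard references (e.g. \cite{cc}) for this equivalence.
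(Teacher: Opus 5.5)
Your proof is correct. The paper does not prove this proposition: it states it, together with the preceding theorem, as a standard fact about Carath\'eodory kernel convergence and points to \cite{cc}.

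Your route is genuinely different in that it derives the proposition from the preceding theorem, which the paper also only quotes. Only one classical result is imported, and the transfer between locally uniform convergence of $\varphi_n$ and of $f_n=\varphi_n^{-1}$ is done by hand:
\begin{itemize}
\item In (1)$\Rightarrow$(2), Rouch\'e on $|\zeta|=r$, uniformly in $a\in K$, gives $K\subset\varphi_n(D_r(0))\Subset\Omega_n$ together with the quantitative bound $f_n(K)\subset D_r(0)$. A compactness argument then gives uniform convergence.
\item In (2)$\Rightarrow$(1), Rouch\'e on $\partial L=\varphi(\{|\zeta|=\rho'\})$ (equivalently on $|\zeta|=\rho'$ for $f_n\circ\varphi\to\mathrm{id}$) yields $\varphi_n(\overline{D_\rho(0)})\subset L$. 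This is exactly the local boundedness that is not available a priori, since the $\Omega_n$ need not be uniformly bounded.
\end{itemize}

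This also makes explicit that condition (2) never mentions the marked points, yet $w_n=\varphi_n(0)\to\varphi(0)=w$ comes for free. Your preliminary worry about checking $w_n\to w$ directly is therefore unnecessary once you pass through $\varphi_n\to\varphi$. It is easy in any case: by Hurwitz, $f_n$ has a zero near $w$, and by injectivity that zero must be $w_n$.

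What your approach buys is a self-contained argument that isolates the only real inputs (univalence, Rouch\'e, compactness). The paper's citation buys brevity.
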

 
\section{Space of convex domains}\label{sec3}

\begin{defn}
    For compact subsets $A,B$ in $\mathbb{C}^n$, the Hausdorff distance between $A$ and $B$ is defined as
    \[
    d_H(A,B):=\max\left\{\sup_{a\in A}\inf_{b\in B}|a-b|,\sup_{b\in B}\inf_{a\in A}|a-b|\right\}.
    \]
For $R>0$ and a set $A\subset\mathbb{C}^n$, let $A^{(R)}:=A\cap B_R(0)$. The local Hausdorff semi-norm is then defined as
\[
d_H^{(R)}(A,B):=d_H(A^{(R)},B^{(R)}).
\]
\end{defn}

Let $\mathbb{X}_n$ denote the space of convex domains in $\mathbb{C}^n$ that contains no complex affine line, and $\mathbb{X}_{n,0}$ be the space of pairs $(\Omega,x)$ with $\Omega\in \mathbb{X}_n$ and $x\in\Omega$. Suppose $\operatorname{Aff}(\mathbb{C}^n)$ is the group of affine transformations of $\mathbb{C}^n$, for $A\in\operatorname{Aff}(\mathbb{C}^n)$, the action on $\mathbb{X}_{n,0}$ is given by $A(\Omega,x):=(A\Omega,Ax)$.

\begin{defn}
For a sequence $\left\{\Omega_k\right\}\subset \mathbb{X}_n$, we say that $\Omega_k$ converges to $\Omega_{\infty}$ in $\mathbb{X}_n$ if there exists a constant $R_0>0$ such that 
\[
\lim\limits_{k\rightarrow\infty}d_H^{(R)}(\Omega_k,\Omega_{\infty})=0
\]
for any $R\geq R_0$. Similarly, a sequence  $\left\{\left(\Omega_k,x_k\right)\right\}$ converges to $\left(\Omega_{\infty},x_{\infty}\right)$ in $\mathbb{X}_{n,0}$ if $\Omega_k$ converges to $\Omega_{\infty}$ in $\mathbb{X}_n$ and $x_k\rightarrow x_{\infty}$.
\end{defn}

\smallskip

For a convergent sequence in $\mathbb{X}_{n,0}$, both of the squeezing function and the squeezing function corresponding to polydisk have the following upper semi-continuity property.
\begin{proposition}\label{uc}
    Suppose $(\Omega_k,x_k)$ converges to $(\Omega_{\infty},x_{\infty})$ in $\mathbb{X}_{n,0}$, then
    \[
    \limsup_{k\rightarrow\infty}S_{\Omega_k}(x_k)\leq S_{\Omega_{\infty}}(x_{\infty}),\ \limsup_{k\rightarrow\infty}T_{\Omega_k}(x_k)\leq T_{\Omega_{\infty}}(x_{\infty}).
    \]
\end{proposition}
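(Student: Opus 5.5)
We argue by a normal families argument, treating the polydisk case; the ball case is identical with $\mathbb{B}^n$ and $B_r(0)$ in place of $\mathbb{D}^n$ and $D_r(0)$. Set $r:=\limsup_{k} T_{\Omega_k}(x_k)$. Passing to a subsequence, we may assume $T_{\Omega_k}(x_k)\to r$. If $r=0$ there is nothing to prove, so assume $r>0$. Since the supremum in the definition of $T_{\Omega_k}$ is essentially attained up to $\varepsilon$, for each $k$ we choose a holomorphic embedding $f_k:\Omega_k\to\mathbb{D}^n$ with $f_k(x_k)=0$ and $D_{r_k}(0)\subset f_k(\Omega_k)$, where $r_k\to r$.

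\textbf{Step 1: converging domains.} Convergence in $\mathbb{X}_{n,0}$ in the local Hausdorff sense gives the following for convex domains. Every compact $K\subset\Omega_\infty$ lies in $\Omega_k$ for large $k$. Conversely, every compact subset $L$ of the interior of $\liminf$ sets behaves well: if a point lies in $\Omega_k$ for infinitely many $k$ and stays uniformly away from $\partial\Omega_k$, then it lies in $\overline{\Omega_\infty}$. Both facts follow from convexity, since a compact set inside a convex open set is at positive distance from its complement, and Hausdorff closeness of the truncations $\Omega_k^{(R)}$ controls this.

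\textbf{Step 2: limits of the maps and their inverses.} The maps $f_k$ are uniformly bounded into $\mathbb{D}^n$. By Montel's theorem and Step 1, a subsequence converges locally uniformly on $\Omega_\infty$ to a holomorphic map $f:\Omega_\infty\to\overline{\mathbb{D}^n}$ with $f(x_\infty)=0$. By the maximum principle $f(\Omega_\infty)\subset\mathbb{D}^n$.

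Next consider the inverses $g_k:=f_k^{-1}$ restricted to $D_{r_k}(0)$, which map into $\Omega_k$. Here we need uniform boundedness of $\Omega_k$ near the base point. This is where the hypothesis that elements of $\mathbb{X}_n$ contain no complex affine line enters. The $\Omega_k$ need not be uniformly bounded, but we can still extract a limit. By Step 1, the image $g_k(D_{r'}(0))$ stays in a fixed compact set for each $r'<r$: Kobayashi distances on $D_{r'}(0)$ are bounded, and the maps $g_k$ are contractions for the Kobayashi distance. Since $\Omega_\infty$ contains no complex line, it is Kobayashi hyperbolic, with Kobayashi balls around $x_\infty$ relatively compact. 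Kobayashi balls in $\Omega_k$ converge accordingly, for instance via Barth's theorem or via comparison using supporting real hyperplanes, which converge. Hence, after passing to a subsequence, $g_k\to g$ locally uniformly on $D_r(0)$, with $g(0)=x_\infty$ and $g(D_r(0))\subset\overline{\Omega_\infty}$. By openness arguments, $g(D_r(0))$ in fact lies in $\Omega_\infty$.

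\textbf{Step 3: conclusion.} We have $f_k\circ g_k=\mathrm{id}$ on $D_{r'}(0)$, and passing to the limit gives $f\circ g=\mathrm{id}$ on $D_r(0)$. Therefore $f$ has nonsingular differential at $x_\infty$, and by Hurwitz's theorem applied to the injective maps $f_k$, the limit $f$ is injective. Moreover $D_r(0)\subset f(\Omega_\infty)$. Thus $T_{\Omega_\infty}(x_\infty)\ge r$.

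\textbf{Main obstacle.} The hardest step is the uniform compactness in Step 2, namely controlling $g_k(D_{r'}(0))$ inside a fixed compact subset of $\Omega_\infty$ when the $\Omega_k$ are unbounded. The first approach we will try is a uniform lower bound on the Kobayashi distance in the convex domains $\Omega_k$ via supporting half-planes. Each $\Omega_k$ contains no complex line, so it lies in an affine image of a product of half-planes, and these images converge because the domains converge in $\mathbb{X}_n$.
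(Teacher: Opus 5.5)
The paper states this proposition without proof, so there is no internal argument to compare against. Your normal-families scheme is the standard one: take the limit $f$ of the $f_k$, the limit $g$ of the inverses $g_k=f_k^{-1}|_{D_{r_k}(0)}$, show $f\circ g=\mathrm{id}$, and conclude with Hurwitz. Steps 1 and 3 are fine.

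\textbf{The gap.} It is the step you flag yourself: the uniform compactness of $g_k(D_{r'}(0))$ carries essentially all the content, and it is not established.
\begin{enumerate}
\item Barth's theorem is a statement about one domain at a time and gives no uniformity in $k$.
\item The claim that the enclosing affine images of products of half-planes ``converge because the domains converge in $\mathbb{X}_n$'' is not automatic. Such enclosures are not canonical, and a functional bounded above on $\Omega_\infty$ need not be bounded above on any $\Omega_k$. For example, take $\Omega_\infty=\{\operatorname{Re}z_1<0\}\times\mathbb{D}$ and $\Omega_k=\{\operatorname{Re}z_1+\frac1k\operatorname{Im}z_1<0\}\times\mathbb{D}$. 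Then $\Omega_k\to\Omega_\infty$ in $\mathbb{X}_2$, but $\sup_{\Omega_k}\operatorname{Re}z_1=\infty$. So the half-spaces must be produced for $\Omega_k$ itself, with a proof that they do not degenerate.
\item ``Openness arguments'' for $g(D_r(0))\subset\Omega_\infty$ is only a placeholder. You need a quantitative reason why $g_k(w)$ cannot approach $\partial\Omega_k$.
\end{enumerate}

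\textbf{Two lower bounds.} Here is one way to close the gap with elementary tools. Let $\Omega\in\mathbb{X}_n$ be convex, $p\neq q$ in $\Omega$, and $v=(q-p)/|q-p|$. Let $\delta_\Omega(p;v)$ be the radius of the largest disk centered at $p$ in $\Omega\cap(p+\mathbb{C}v)$. Pick a point $\xi\in\partial\Omega$ on its boundary circle, and a complex linear $\ell$ with $\Omega\subset H:=\{\operatorname{Re}\ell<\operatorname{Re}\ell(\xi)\}$; then $\ell(v)\neq0$. Combine the half-plane estimate $K_H(a,b)\geq\frac12\log\bigl(1+|a-b|/\operatorname{dist}(a,\partial H)\bigr)$ with $\operatorname{dist}(\ell(p),\partial H)\leq|\ell(v)|\,\delta_\Omega(p;v)$. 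This gives
\[
K_\Omega(p,q)\geq\frac12\log\Bigl(1+\frac{|q-p|}{\delta_\Omega(p;v)}\Bigr).
\]
The same projection at a nearest boundary point of $q$ gives $K_\Omega(p,q)\geq\frac12\log\bigl(\delta_\Omega(p)/\delta_\Omega(q)\bigr)$.

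\textbf{Uniform constants.} Next, $M:=\sup_{k\geq k_0}\sup_{|v|=1}\delta_{\Omega_k}(x_k;v)<\infty$. Otherwise there are disks $x_k+\{|\zeta|<\rho_k\}v_k\subset\Omega_k$ with $\rho_k\to\infty$ and $v_k\to v$. Local Hausdorff convergence then puts $x_\infty+\mathbb{C}v$ in $\overline{\Omega_\infty}$, hence in $\Omega_\infty$ by convexity, contradicting $\Omega_\infty\in\mathbb{X}_n$. Also $\delta_{\Omega_k}(x_k)\geq\eta_0>0$ for large $k$, by your Step 1.

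\textbf{Conclusion.} For $w\in D_{r'}(0)$ we have $K_{\Omega_k}(x_k,g_k(w))\leq C(r')$.
\begin{enumerate}
\item The first estimate gives $|g_k(w)-x_k|\leq(e^{2C}-1)M$, so Montel applies to the $g_k$.
\item The second gives $\delta_{\Omega_k}(g_k(w))\geq\eta_0e^{-2C}$. So every limit $g(w)$ is the center of a ball of fixed radius inside $\overline{\Omega_\infty}$, and therefore lies in $\Omega_\infty$.
\end{enumerate}
With this inserted, your Step 3 goes through verbatim, and the ball case is identical.
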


Moreover, we have the following continuity under inclusion..
\begin{proposition}
     Suppose $(\Omega_k,x_k)$ converges to $(\Omega_{\infty},x_{\infty})$ in $\mathbb{X}_{n,0}$, and $\Omega_k\subset\Omega_{\infty}$ for large $k$, then
     \[
    \lim_{k\rightarrow\infty}S_{\Omega_k}(x_k)=S_{\Omega_{\infty}}(x_{\infty}),\ \lim_{k\rightarrow\infty}T_{\Omega_k}(x_k)=T_{\Omega_{\infty}}(x_{\infty}).
    \]
\end{proposition}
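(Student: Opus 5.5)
Given upper semicontinuity (Proposition \ref{uc}), it suffices to prove the reverse inequalities
\[
\liminf_{k\rightarrow\infty}S_{\Omega_k}(x_k)\geq S_{\Omega_{\infty}}(x_{\infty}),\qquad \liminf_{k\rightarrow\infty}T_{\Omega_k}(x_k)\geq T_{\Omega_{\infty}}(x_{\infty}).
\]
The inclusion $\Omega_k\subset\Omega_\infty$ makes this direction easy: an almost-extremal map for $\Omega_\infty$ restricts to $\Omega_k$, and only the recentering and the size of the image need to be controlled. I treat $T$; the argument for $S$ is identical with $D_r(0)$ replaced by $B_r(0)$ and $\mathbb{D}^n$ by $\mathbb{B}^n$.

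Fix $\varepsilon>0$ and $r<T_{\Omega_\infty}(x_\infty)$. Choose an injective holomorphic $f:\Omega_\infty\rightarrow\mathbb{D}^n$ with $f(x_\infty)=0$ and $D_r(0)\subset f(\Omega_\infty)$. First shrink slightly: for $r'<r$ the set $K:=f^{-1}(\overline{D_{r'}(0)})$ is compact in $\Omega_\infty$, since $f$ is a homeomorphism onto its open image. Next I need $K\subset\Omega_k$ for large $k$. Because the $\Omega_k$ and $\Omega_\infty$ are convex and $d_H^{(R)}(\Omega_k,\Omega_\infty)\rightarrow0$ for large $R$, every compact subset of the open convex set $\Omega_\infty$ lies in $\Omega_k$ for $k$ large. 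To see this, cover $K$ by the interior of the convex hull of finitely many points of $\Omega_\infty$ that are bounded away from $\partial\Omega_\infty$. Hausdorff-close points lie in $\Omega_k$, and convexity of $\Omega_k$ then contains the hull. I expect this to be the only genuinely technical point, because Hausdorff convergence of open sets controls closures. Convexity handles it: if $B_\delta(y)\subset\Omega_\infty$ and $d_H^{(R)}<\delta/2$, then $\Omega_k$ meets each $B_{\delta/2}(y+v)$ for $|v|=\delta/2$, and taking the convex hull of such points gives $y\in\Omega_k$.

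Then $f_k:=f|_{\Omega_k}$ is injective into $\mathbb{D}^n$. Since $x_k\rightarrow x_\infty$, we have $a_k:=f(x_k)\rightarrow0$. Compose with the automorphism $\phi_{a_k}$ of $\mathbb{D}^n$, a product of Möbius maps sending $a_k$ to $0$. Then $g_k=\phi_{a_k}\circ f_k:\Omega_k\rightarrow\mathbb{D}^n$, $g_k(x_k)=0$, and $g_k(\Omega_k)\supset\phi_{a_k}(D_{r'}(0))$. Since $\phi_{a_k}\rightarrow\mathrm{id}$ uniformly on $\overline{\mathbb{D}^n}$, for large $k$ the set $\phi_{a_k}(D_{r'}(0))$ contains $D_{r'-\varepsilon}(0)$. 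Hence $T_{\Omega_k}(x_k)\geq r'-\varepsilon$ for large $k$. Letting $\varepsilon\rightarrow0$, $r'\rightarrow r$ and $r\rightarrow T_{\Omega_\infty}(x_\infty)$ gives the liminf bound, and combining it with Proposition \ref{uc} yields the equality of limits. The same argument, using ball automorphisms, handles $S$.
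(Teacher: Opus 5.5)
The paper states this proposition without proof, so there is no argument of its own to compare against. Your proof is correct and is the natural one: Proposition \ref{uc} gives the $\limsup$ inequality. For the $\liminf$, you restrict a near-extremal embedding of $\Omega_\infty$ to $\Omega_k\subset\Omega_\infty$, check that $f^{-1}(\overline{D_{r'}(0)})$ is eventually inside $\Omega_k$, and recenter at $f(x_k)\to 0$ by automorphisms of $\mathbb{D}^n$ (resp.\ $\mathbb{B}^n$). These steps give the reverse inequality, and the inclusion hypothesis is used exactly there.

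One step deserves a line of justification: ``taking the convex hull of such points gives $y\in\Omega_k$''. It is cleanest as a separation argument. If $y\notin\Omega_k$, take a unit vector $u$ with $\operatorname{Re}\langle b-y,u\rangle<0$ for all $b\in\Omega_k$. A point $b\in\Omega_k$ with $|b-(y+\tfrac{\delta}{2}u)|<\tfrac{\delta}{2}$, which exists once $d_H^{(R)}<\tfrac{\delta}{2}$, then satisfies $\operatorname{Re}\langle b-y,u\rangle>0$, a contradiction. Since $\delta$ can be chosen uniformly over the compact set $K$, this gives $K\subset\Omega_k$ for all large $k$.

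In the ball case, the standard involution $\varphi_a$ tends to $-\mathrm{id}$ rather than to the identity as $a\to 0$. This is harmless, since $B_{r'}(0)$ is invariant under $-\mathrm{id}$, or you can use $-\varphi_a$ instead.
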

\smallskip

Suppose $e_1,\cdots,e_n$ is the standard basis of $\mathbb{C}^n$, we define
\[
Z_1:=\operatorname{Span}_{\mathbb{C}}\{e_2,\cdots,e_n\}
\]
and for $j=1,\cdots,n$
\[
Z_j:=e_j+\operatorname{Span}_{\mathbb{C}}\{e_{j+1},\cdots,e_n\}.
\]
Let $\mathbb{K}_n\subset \mathbb{X}_{n}$ be the set of all convex domains $\Omega\in \mathbb{X}_n$ such that $\mathbb{D}e_j\subset\Omega$ and $Z_j\cap\Omega=\emptyset$ for each $j=1,\cdots,n$. We then have the following compactness result.

\begin{thmNoParens}[{\cite[Proposition 4.4]{neg}}]\label{com}
    With the notation above, we have

    (1) $\mathbb{K}_n$ is a compact subset of $\mathbb{X}_{n}$.

    (2) For any $(\Omega,x)\in \mathbb{X}_{n}$, there exists some $A\in\operatorname{Aff}(\mathbb{C}^n)$ such that $A(\Omega,x)\in\mathbb{K}_n
    $.
\end{thmNoParens}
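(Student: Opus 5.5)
The statement to prove is the compactness theorem for $\mathbb{K}_n$ together with the affine normalization (Theorem \ref{com}). The plan is to prove (2) first by an explicit construction, and then (1) by a compactness argument using supporting hyperplanes.

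\textbf{Part (2): affine normalization.} Translate $x$ to the origin. We build the vectors inductively. Since $\Omega$ contains no complex affine line, each complex line $\mathbb{C}v$ through $0$ meets $\Omega$ in a convex set not equal to all of $\mathbb{C}v$. Proceed as follows.
\begin{enumerate}
\item Choose a boundary point $\xi_1\in\partial\Omega$ closest to $0$, and set $v_1=\xi_1$.
\item Let $H_1$ be the complex hyperplane $\xi_1+W_1$, where $W_1$ is the maximal complex subspace of a real supporting hyperplane at $\xi_1$. Such a supporting hyperplane exists by convexity, and $H_1\cap\Omega=\emptyset$.
\item Restrict to $\Omega\cap W_1$. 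This is again convex and contains no complex line, so we repeat: pick the closest boundary point $\xi_2\in W_1$ and a complex supporting hyperplane inside $W_1$. Continue until we have $v_1,\dots,v_n$.
\end{enumerate}
The vectors $v_j$ are linearly independent by construction. Let $A$ be the linear map sending $v_j\mapsto e_j$. Because each $\xi_j$ is a closest point, the disk of radius $|\xi_j|$ in the direction $\xi_j$ lies in $\Omega$, so $\mathbb{D}e_j\subset A\Omega$. The supporting hyperplanes map to $Z_j$, so $Z_j\cap A\Omega=\emptyset$. The step needing care is arranging that the chain of hyperplanes is compatible with the flag $Z_j=e_j+\operatorname{Span}\{e_{j+1},\dots,e_n\}$. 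This is handled by choosing $W_j$ nested and taking $e_{j+1},\dots,e_n$ to span $W_j$.

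\textbf{Part (1): compactness.} Let $\Omega_k\in\mathbb{K}_n$. Since $\mathbb{D}e_j\subset\Omega_k$ and $\Omega_k$ is convex, $\Omega_k$ contains the convex hull of these disks, which is a fixed neighborhood $N$ of $0$. By Blaschke selection applied to $\overline{\Omega_k}\cap\overline{B_R(0)}$ for $R=1,2,\dots$, together with a diagonal argument, we extract a subsequence converging in every local Hausdorff seminorm to a closed convex set $C$ with $N\subset C$. Let $\Omega_\infty=\operatorname{int}C$, which is nonempty and convex. Two limit conditions follow directly:
\begin{itemize}
\item $\mathbb{D}e_j\subset\Omega_\infty$. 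The closed disks lie in $C$, and openness follows because $\overline{\mathbb{D}}e_j$ is contained in $\frac{1}{t}\Omega$-type shrinkings of a neighborhood. More simply, each open point of $\mathbb{D}e_j$ lies in the interior of the convex hull of $N$ and $\overline{\mathbb{D}}e_j$.
\item $Z_j\cap\Omega_\infty=\emptyset$. If an interior point of $C$ lay on $Z_j$, then a small ball around it would be in $\Omega_k$ for large $k$, contradicting $Z_j\cap\Omega_k=\emptyset$.
\end{itemize}

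\textbf{Main obstacle: $\Omega_\infty$ contains no complex affine line.} I would bound $\Omega_k$ uniformly inside a fixed region. The condition $Z_1\cap\Omega=\emptyset$, where $Z_1$ is the hyperplane $\{z_1=0\}$, does not by itself force $z_1$ to stay on one side. Instead, use the fact that $\Omega$ is convex and contains $N$ but misses the complex hyperplane $e_1+Z_1'$, where $Z_1'=\operatorname{Span}\{e_2,\dots,e_n\}$. (The paper's "$Z_1$" I interpret as $e_1+\operatorname{Span}\{e_2,\dots,e_n\}$.) Convexity then forces the projection of $\Omega$ to the $z_1$-coordinate to avoid $1$ and to lie in a half-plane-like convex set missing $1$ while containing $\mathbb{D}$. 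This gives a uniform estimate
\[
|z_1|\le C\,\operatorname{dist}(z_1,\text{supporting line})
\]
in the limit. Inductively, $Z_j$ controls $z_j$ on the slices. Then suppose $\Omega_\infty$ contained a complex line $a+\mathbb{C}v$. Take the smallest index $j$ with $v_j\neq0$. The projection of $\Omega_\infty$ to coordinate $j$ (after fixing the earlier coordinates appropriately) would contain all of $\mathbb{C}$, hence the point $1$. Combined with the disk $\mathbb{D}e_j$ and convexity, this yields an interior point of $\Omega_\infty$ on $Z_j$, contradicting the limit condition above. Making this projection argument rigorous is the crux; I would follow the structure of \cite{neg}.
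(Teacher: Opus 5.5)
The paper gives no proof of this result; it only cites \cite{neg}. I therefore judge your argument on its own terms. Most of it is sound, but the crux of part (1) is missing, and the route you sketch for it would not work.

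\textbf{What is fine.} Your part (2) is the standard construction, the same one the paper carries out by hand in the proof of Theorem~\ref{ls}. One point you should state explicitly: since $B_{|\xi_j|}(0)\cap W_{j-1}\subset\Omega$, the supporting hyperplane at $\xi_j$ must be the one tangent to this ball. Hence $W_j=\xi_j^{\perp}\cap W_{j-1}$, the $\xi_j$ are mutually orthogonal, and $A$ does carry $\xi_j+W_j$ onto $Z_j$. In part (1), the Blaschke/diagonal extraction and your two limit conditions are also fine. The hull $N=\{\sum_j|z_j|<1\}$ is open and contains every $\mathbb{D}e_j$, so $\mathbb{D}e_j\subset N\subset\operatorname{int}C$ follows at once.

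\textbf{The gap.} You never show that $\Omega_\infty$ contains no complex affine line, and the steps you sketch would not reach it.
\begin{itemize}
\item The estimate $|z_1|\le C\,\operatorname{dist}(z_1,\text{supporting line})$ is false on $\mathbb{K}_n$. The domain $\{\operatorname{Re}z_1<1\}\times\mathbb{D}^{n-1}$ lies in $\mathbb{K}_n$ and contains $(1-\epsilon+iT,0,\dots,0)$ for every real $T$.
\item Fixing the earlier coordinates at $a_1,\dots,a_{j-1}$ only gives a point $(a_1,\dots,a_{j-1},1,\ast)\in\Omega_\infty$. That point is not on $Z_j$ unless $a_1=\dots=a_{j-1}=0$. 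Your phrase ``combined with the disk and convexity'' stands in for exactly the argument that is missing.
\item Your aside is also wrong under the correct reading $Z_1=\{z_1=1\}$: that condition does force one side. The projection of $\Omega$ to the $z_1$-coordinate is convex, contains $\mathbb{D}$ and misses $1$, so it lies in $\{\operatorname{Re}z_1<1\}$.
\end{itemize}

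\textbf{The missing idea.} An open convex set that contains a line contains every parallel line through its points. Suppose $a+\mathbb{C}v\subset\Omega_\infty$ and $p\in\Omega_\infty$. Pick $\epsilon>0$ with $p'=p+\epsilon(p-a)\in\Omega_\infty$. Then for all $\lambda\in\mathbb{C}$,
\[
p+\lambda v=\tfrac{1}{1+\epsilon}\,p'+\tfrac{\epsilon}{1+\epsilon}\Bigl(a+\tfrac{1+\epsilon}{\epsilon}\lambda v\Bigr)\in\Omega_\infty .
\]
Since $0\in N\subset\Omega_\infty$, you may take $p=0$ and get $\mathbb{C}v\subset\Omega_\infty$. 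If $j$ is the least index with $v_j\neq0$, then $v/v_j\in Z_j\cap\Omega_\infty$, which contradicts your second bullet. No estimates or disks are needed.

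\textbf{An alternative route.} Take $\Omega_k$-supporting vectors from Lemma~\ref{sup}. Their entries are bounded by $1$, so a subsequence converges to lower-triangular vectors $v_1,\dots,v_n$ with unit diagonal. This gives
\[
\Omega_\infty\subset\{\operatorname{Re}\langle z,v_j\rangle<1,\ j=1,\dots,n\}.
\]
That set contains no complex affine line, because the $v_j$ are linearly independent. This is the same mechanism the paper uses in the proof of Theorem~\ref{ls}.
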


\begin{defn} 
For a given convex domain $\Omega\subset\mathbb{K}_n$, the vectors $v_1,\cdots,v_n$ are called $\Omega$-$supporting$ if
\[
e_j+\operatorname{Span}_{\mathbb{C}}\{e_{j+1},\cdots,e_d\}\subset\{z\in\mathbb{C}^d:\operatorname{Re}\langle z,v_j\rangle=1\}
\]
and
\[
\Omega\subset\{z\in\mathbb{C}^n:\operatorname{Re}\langle z,v_j\rangle<1\}
\]
for each $j=1,\cdots,n$.
\end{defn}
Then we have the following.

\begin{lemmaNoParens}[{\cite[Lemma 5.1]{sc}}]\label{sup}
    For every $\Omega\in\mathbb{K}_n$, there exist $\Omega$-supporting vectors. If $(v_1,\cdots,v_n)$ are $\Omega$-supporting, then 
    \begin{enumerate}
        \item $v_{j,j}=1$,

        \item $v_{j,k}=0$ if $k>j$,

        \item $|v_{j,k}|\leq1$ if $k<j$.
    \end{enumerate}
\end{lemmaNoParens}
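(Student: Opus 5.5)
Since $\Omega\in\mathbb{K}_n\subset\mathbb{X}_n$ is convex and contains no complex affine line, the existence and the three coordinate properties all come from the tangency conditions between the complex affine subspaces $Z_j$ and $\Omega$. The plan is to handle existence first, then read off (1)--(3) from the defining conditions. The conventions are $\langle z,v\rangle=\sum_k z_k\overline{v_k}$ and $d=n$; the statement contains the typo $d$ for $n$.

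\emph{Existence.} Fix $j$. The complex affine subspace $Z_j=e_j+\operatorname{Span}_{\mathbb{C}}\{e_{j+1},\dots,e_n\}$ is convex and disjoint from the open convex set $\Omega$. By Hahn--Banach (geometric form) there is a real hyperplane $H$ containing $Z_j$ with $\Omega$ on one open side. To see this, separate $\Omega$ from $Z_j$ by a real linear functional $\ell$ with $\ell<c$ on $\Omega$ and $\ell\ge c$ on $Z_j$. Since $\ell$ is bounded below on the affine space $Z_j$, it is constant there, equal to $c'\geq c$. Because $0\in\Omega$ we have $c>0$, so we may replace $c$ by $c'$ and normalize $c'=1$. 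Every real linear functional on $\mathbb{C}^n$ has the form $\operatorname{Re}\langle z,v\rangle$, so we take $v_j=v$. Openness of $\Omega$ gives the strict inequality $\operatorname{Re}\langle z,v_j\rangle<1$ on $\Omega$.

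\emph{Properties (1) and (2).} The function $\operatorname{Re}\langle e_j+w,v_j\rangle$ is constant equal to $1$ for all $w\in\operatorname{Span}_{\mathbb{C}}\{e_{j+1},\dots,e_n\}$. Taking $w=te_k$ with $k>j$ and $t\in\mathbb{C}$ arbitrary forces $\operatorname{Re}(t\overline{v_{j,k}})=0$ for all $t$, hence $v_{j,k}=0$; this is (2). Then $w=0$ gives $\operatorname{Re} v_{j,j}=1$.
\begin{itemize}
\item[] To get $\operatorname{Im} v_{j,j}=0$, use $\mathbb{D}e_j\subset\Omega$: for $\zeta\in\mathbb{D}$ we have $\operatorname{Re}(\zeta\overline{v_{j,j}})<1$. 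Taking the supremum over $\zeta\in\mathbb{D}$ gives $|v_{j,j}|\le1$.
\item[] Combined with $\operatorname{Re} v_{j,j}=1$, this forces $v_{j,j}=1$, which is (1).
\end{itemize}

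\emph{Property (3).} This is the step needing slightly more care. For $k<j$ we again use $\mathbb{D}e_k\subset\Omega$: for all $\zeta\in\mathbb{D}$, $\operatorname{Re}(\zeta\overline{v_{j,k}})<1$. The supremum over the disk equals $|v_{j,k}|$, so $|v_{j,k}|\le1$. The only subtlety is that the strict inequality degrades to $\le$ under the supremum, which is why (3) is non-strict.

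No real obstacle is expected. The main point to check is the separation step, namely that the separating functional is constant on $Z_j$, which holds because a nonconstant real affine function on an affine subspace is unbounded below there.
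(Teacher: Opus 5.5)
Your proof is correct. It is essentially the standard argument behind \cite[Lemma 5.1]{sc}; the paper itself gives no proof and only cites that lemma. You separate the open convex set $\Omega$ from the complex affine subspace $e_j+\operatorname{Span}_{\mathbb{C}}\{e_{j+1},\dots,e_n\}$ by a real hyperplane. The functional must be constant on that subspace, and it can be normalized to $1$ because $0\in\Omega$. Then (1)--(3) follow by evaluating on that subspace and on the disks $\mathbb{D}e_k\subset\Omega$.
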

With the notation above, we obtain the following holomorphic embedding.
\begin{lemma}
    The map
    \[
    F(z):=\left(\frac{\langle z,v_1\rangle}{2-\langle z,v_1\rangle},\cdots,\frac{\langle z,v_n\rangle}{2-\langle z,v_n\rangle}\right)
    \]
    is a holomorphic embedding from $\Omega$ into $\mathbb{D}^n$. Moreover, $F(E)=\mathbb{D}^n$ with
    \[
    E:=\left\{z\in\mathbb{C}^n:\operatorname{Re}\langle z,v_1\rangle<1,\cdots,\operatorname{Re}\langle z,v_n\rangle<1\right\}.
    \]
\end{lemma}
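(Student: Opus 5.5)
We have to prove two things: that $F$ maps $\Omega$ holomorphically and injectively into $\mathbb{D}^n$, and that $F(E)=\mathbb{D}^n$. The plan is to factor $F$ as a composition of two simple maps. Set
\[
L(z):=\bigl(\langle z,v_1\rangle,\dots,\langle z,v_n\rangle\bigr),\qquad \phi(w):=\frac{w}{2-w},
\]
and let $\Phi(w_1,\dots,w_n):=(\phi(w_1),\dots,\phi(w_n))$. Then $F=\Phi\circ L$, with the pairing $\langle z,v\rangle=\sum_k z_k\overline{v_k}$ taken complex-linear in $z$, which is what makes $F$ holomorphic.

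First, $L$ is a complex-linear isomorphism of $\mathbb{C}^n$. By Lemma \ref{sup}, the matrix $(v_{j,k})$ is lower triangular with diagonal entries $v_{j,j}=1$. Hence the matrix of $L$, with entries $\overline{v_{j,k}}$, is also triangular with ones on the diagonal, so its determinant is $1$ and $L$ is invertible.

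Second, $\phi$ is a Möbius transformation carrying the half-plane $H:=\{\operatorname{Re}w<1\}$ biholomorphically onto $\mathbb{D}$:
\begin{enumerate}
\item Its pole $w=2$ lies outside $\overline{H}$, so $\phi$ is holomorphic on $H$.
\item It is injective, being Möbius.
\item We have $|\phi(w)|<1$ if and only if $|w|<|2-w|$, that is, $w$ is closer to $0$ than to $2$. This holds exactly when $\operatorname{Re}w<1$.
\end{enumerate}
It follows that $\Phi$ is a biholomorphism from $H^n$ onto $\mathbb{D}^n$.

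By definition, $E=L^{-1}(H^n)$. So $F|_E=\Phi\circ L|_E$ is a biholomorphism of $E$ onto $\mathbb{D}^n$, which gives $F(E)=\mathbb{D}^n$.

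Since $v_1,\dots,v_n$ are $\Omega$-supporting, $\Omega\subset\{\operatorname{Re}\langle z,v_j\rangle<1\}$ for every $j$, so $\Omega\subset E$. Restricting the biholomorphism $F|_E$ to the open set $\Omega$ then gives a holomorphic embedding $\Omega\to\mathbb{D}^n$.

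There is no real obstacle here. The only points needing care are the invertibility of $L$, which comes from the triangular structure in Lemma \ref{sup}, and the half-plane-to-disk computation for $\phi$.
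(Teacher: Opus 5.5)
Your proof is correct and complete. The paper states this lemma without proof, treating it as the standard observation from the cited work, and your factorization $F=\Phi\circ L$ (a unipotent triangular linear map followed by the coordinatewise Cayley-type map $w\mapsto w/(2-w)$ from $\{\operatorname{Re}w<1\}$ onto $\mathbb{D}$) is exactly the intended reasoning. The one step you leave implicit, surjectivity of $\phi$ onto $\mathbb{D}$, follows at once from the explicit inverse $\zeta\mapsto 2\zeta/(1+\zeta)$, which is holomorphic on $\mathbb{D}$.
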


\section{Geometry of bounded generic convex domains}\label{sec4}
 In this section, we study the geometry of bounded generic convex domains. The main goal of this section is to prove Theorem \ref{main}, Theorem \ref{elf} and Theorem \ref{fv}. The proofs of Theorem \ref{elf} and Theorem \ref{fv} rely on Theorem \ref{main}. At the beginning of this section, We first treat the singular boundary points, which we state as a separate theorem.

\begin{thm}\label{ls}
    Let $\Omega\subset\mathbb{C}^2$ be a bounded generic convex domain with piecewise Levi flat boundary. Suppose $p\in\partial\Omega$ is a singular point, then  
    \[
    \lim\limits_{z\rightarrow p}T_{\Omega}(z)=1.
    \]
\end{thm}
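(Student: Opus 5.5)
At a singular point $p$ of a domain in $\mathbb{C}^2$ we have $r_p=2$. Exactly two defining functions vanish at $p$, say $\rho_1$ and $\rho_2$, and genericity gives $d\rho_1\wedge d\rho_2(p)\neq0$. Each hypersurface $H_i=\{\rho_i=0\}$ is a $C^2$ Levi flat hypersurface. Near $p$ it is therefore foliated by complex curves, and the leaf $L_i$ through $p$ is the complex tangent line $T^{\mathbb{C}}_pH_i$ to first order. Convexity of $\Omega$ forces each $H_i$ near $p$ to bound a convex region. A convex Levi flat $C^2$ hypersurface contains its leaves as pieces of complex lines. Indeed, a complex curve lying in the boundary of a convex set lies in a supporting real hyperplane, and by holomorphy it lies in the complex line contained in that hyperplane.

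The first step is the genericity/transversality claim: the two complex tangent lines $T^{\mathbb{C}}_pH_1$ and $T^{\mathbb{C}}_pH_2$ are distinct. If they coincided, the two supporting real hyperplanes $\{\operatorname{Re}\langle z-p,v_i\rangle=0\}$, which contain the same complex line, would have complex normals $v_1,v_2$ that are complex multiples. Convexity then forces more. Both $H_i$ contain the same complex line piece through $p$, and inside the complex normal direction the two-dimensional picture has a convex sector. I plan to show that in this case $d\rho_1\wedge d\rho_2$ still does not vanish, but that the configuration reduces to a product situation. In fact I expect that one can always choose affine coordinates with $p=0$ such that near $0$
\[
\Omega\cap U=\{(z,w)\in U:\rho_1<0,\ \rho_2<0\},
\]
where $\partial\Omega$ is locally the union of a piece foliated by lines $\{z=c\}$ and a piece foliated by lines $\{w=c\}$. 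Indeed, convex Levi flat pieces are ruled by complex lines, so both foliations are by affine complex lines near $p$. Their directions vary continuously, and by the convexity argument they stay transverse near $p$.

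Next comes rescaling. Take $z_k\to p$ in $\Omega$. Following Theorem \ref{com}, choose affine maps $A_k$ with $A_k(\Omega,z_k)\in\mathbb{K}_2$, and pass to a limit $(\Omega_\infty,x_\infty)$ in $\mathbb{X}_{2,0}$. I aim to show that $\Omega_\infty$ is an affine image of $\mathbb{H}\times\mathbb{H}$, or more generally a product $\{\operatorname{Re}\ell_1<1,\operatorname{Re}\ell_2<1\}$ with $\ell_1,\ell_2$ independent complex linear forms. The reason is that under the blow-up at scale $\operatorname{dist}(z_k,\partial\Omega)$ the ruled pieces converge to their tangent real hyperplanes: $C^2$ curvature terms vanish in the limit, and the direction of the ruling lines converges. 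If $z_k$ approaches only one face, the limit is a half-space $\{\operatorname{Re}\ell_1<1\}$, which contains complex lines and is excluded from $\mathbb{X}_2$. The affine normalization in $\mathbb{K}_2$ resolves this, because the other face then appears at comparable scale in the second direction $e_2$ of the normalization, and boundedness of $\Omega$ keeps the limit line-free.

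Finally, I need the lower bound $\liminf T_\Omega(z_k)\geq1$, since the upper bound $T\le1$ is trivial. Here I would use the embedding lemma of Section \ref{sec3}. For $\Omega_k=A_k\Omega$ choose supporting vectors $v_1^k,v_2^k$, which converge to those of $\Omega_\infty$, and let $F_k$ be the associated map. Then $F_k(\Omega_k)\subset\mathbb{D}^2$, and since $\Omega_k$ converges to $E_\infty=\Omega_\infty$, which equals its own supporting wedge, $F_k(\Omega_k)$ exhausts $\mathbb{D}^2$ locally uniformly. Composing with automorphisms of $\mathbb{D}^2$ that send $F_k(x_k)$ to $0$ then gives $D_{r_k}(0)\subset$ image with $r_k\to1$.

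The main obstacle is that uniform exhaustion. It requires that near the boundary of $\mathbb{D}^2$, which corresponds to far away in the rescaled domain, $\Omega_k$ fill $E_k$. This is only a local statement, since the $C^2$ boundary curves away from the tangent planes at large scale. To handle it, I would instead use the Levi flat rulings directly. I would construct a local biholomorphism $\Phi$ near $p$, defined by the two foliations (the leaf through a point in each face), which straightens $\Omega\cap U$ to $\{\operatorname{Im}\zeta_1>0,\operatorname{Im}\zeta_2>0\}\cap V$. This is exactly the example-type model of Section \ref{sec4}. I would then apply a localization of $T$ at the point: the map $\Phi$, combined with Riemann maps of the local slices, maps $\Omega$ near $p$ into the bidisk. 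Proposition \ref{kc} would then give Carathéodory convergence of the slice maps, which yields the exhaustion.
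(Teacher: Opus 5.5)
Your rescaling step and your first version of the lower bound follow the paper's route: normalize $(\Omega,z_k)$ into $\mathbb{K}_2$, identify the limit as a wedge, and use the supporting-vector maps $F_k:\Omega_k\to\mathbb{D}^2$, which converge to a biholomorphism $F:E_\infty\to\mathbb{D}^2$. The gap is in your last paragraph. You call ``uniform exhaustion near $\partial\mathbb{D}^2$'' the main obstacle and abandon this argument for one that does not work, but that obstacle does not exist.

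For fixed $r<1$ you only need $D_r(0)\subset F_k(\Omega_k)$. Injectivity, $F_k(0)=0$ and $F_k(\Omega_k)\subset F_k(E_k)=\mathbb{D}^2$ all follow from $\Omega_k\subset E_k$. The set $F^{-1}(\overline{D_r(0)})$ is compact in $E_\infty=\Omega_\infty$, so it lies in some $V\subset\subset\Omega_\infty$. Local Hausdorff convergence gives $\overline V\subset\Omega_k$ for large $k$, and $F_k\to F$ uniformly on $\overline V$ because $v_j^k\to v_j$. Since $F(\partial V)$ has positive distance from $\overline{D_r(0)}$, a degree (Rouch\'e--Hurwitz) argument gives $D_r(0)\subset F_k(V)$. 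Hence $\liminf_k T_\Omega(z_k)\ge r$ for every $r<1$. The limit $r\to1$ is taken after $k\to\infty$, so the behaviour of $\Omega_k$ far out, which only affects $F_k(\Omega_k)$ near $\partial\mathbb{D}^2$, never enters. This is exactly how the paper concludes.

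Your substitute fails for three reasons:
(a) A biholomorphism of a neighbourhood of $p$ carrying $H_1,H_2$ to $\{\operatorname{Im}\zeta_1=0\}$ and $\{\operatorname{Im}\zeta_2=0\}$ would make $H_1,H_2$ real-analytic, and $C^2$ Levi flat faces need not be.
(b) The leaves lie in $\partial\Omega$ and define no holomorphic coordinates at interior points.
(c) No localization principle for $T_\Omega$ is available, since $T_\Omega(z)$ requires an embedding of all of $\Omega$ into $\mathbb{D}^2$.

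The steps that genuinely need care are ones you assert in passing.

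(i) $\Omega_\infty=E_\infty$ is not automatic for an arbitrary normalization from Theorem \ref{com}. The wedge $W=\{\operatorname{Re}(w_1+w_2)<1,\ \operatorname{Re}(w_1-\tfrac{1}{2}w_2)<1\}$ belongs to $\mathbb{K}_2$. Its (unique) supporting vectors, however, cut out the strictly larger wedge $\{\operatorname{Re}w_1<1,\ \operatorname{Re}(w_1+w_2)<1\}$, so the associated map sends $W$ onto a proper subset of $\mathbb{D}^2$. The paper's nearest-point normalization avoids this. There $e_1$ is the image of a closest boundary point, so $\{\operatorname{Re}w_1=1\}$ is the tangent plane of a face at $e_1$. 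It survives as a face of the limit, which forces $v_1=e_1$ and forces $v_2$ to define the other face.

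(ii) At that anisotropic scale, where $\lambda_{2,k}$ may be much larger than $\lambda_{1,k}$, the $C^2$ curvature terms do not vanish by themselves. What kills them is the ruling of each face by complex lines, i.e.\ the absence of quadratic pure $z_2$-terms in the expansion the paper takes from Kim.

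(iii) $T^{\mathbb{C}}_pH_1\neq T^{\mathbb{C}}_pH_2$ does not follow from convexity and $d\rho_1\wedge d\rho_2\neq0$. Take $Q=\{|\operatorname{Re}z|<1,\,|\operatorname{Im}z|<1\}$ and the corner $(1+i,0)$ of $Q\times\mathbb{D}$. There both complex tangent lines are $\{z_1=1+i\}$, and the blow-up limits are products of a planar sector or half-plane with a disc, not wedges. Transversality has to come from the complex condition $\partial\rho_1\wedge\partial\rho_2(p)\neq0$, which the paper's local normal form also presupposes.
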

\begin{proof}
Let $p\in\partial\Omega$ be a singular point. Without loss of generality, we may assume that $p$ is the origin of $\mathbb{C}^2$. After a linear change of coordinates, we may assume that there exists a neighborhood $U$ of $p$ such that
    \begin{align*}
    U\cap\Omega=\{(z_1,z_2)\in\mathbb{C}^2:\operatorname{Im}z_1>r_1(z_1,z_2),\operatorname{Im}z_2>r_2(z_1,z_2)\},
    \end{align*}
    where $r_1,r_2$ are $C^2$-smooth convex positive real-valued functions such that the normal vectors to the hyperplanes $\{z\in\mathbb{C}^2:r_1(z)=0\}$ and $\{z\in\mathbb{C}^2:r_2(z)=0\}$ are parallel to $\operatorname{Im}z_1$ and $\operatorname{Im}z_2$ respectively.
    
      Let $p_k\in\Omega$ be a sequence that converges to $p$. For each $p_k$ we can select $A_k\in\operatorname{Aff}(\mathbb{C}^2)$ such that $A_k(\Omega,p_k)\in\mathbb{K}_2$. To see this, we let $T_k$ be the translation $T_k(z)=z-p_k$ and set $\Omega_k^1=T_{k}(\Omega)$. Define
    \[
    \lambda_{1,k}=\min\left\{|z|:z\in\partial\Omega_k^1\right\}
    \]
    and choose $z_{1,k}\in\partial\Omega_k^1$ with $|z_{1,k}|=\lambda_{1,k}$. Next,  let $V_k$ be the maximal complex linear subspace through 0 orthogonal to the complex line $\mathbb{C}z_{1,k}$. Then select $z_{2,k}\in\partial\Omega_k^1\cap V_k$ such that 
    \[
    \lambda_{2,k}:=|z_{2,k}|=\min\left\{|x|:x\in\partial\Omega^1_k\cap V_k\right\}.
    \]
   
    Let $U_k$ be the unitary map determined by
    \[
    (U_kT_k)(z_{i,k})=|z_{i,k}|e_i,
    \]
     and set 
    \[
    B_k=\left(\begin{matrix}    
        1/\lambda_{1,k} & 0 \\
         
         0 & 1/\lambda_{2,k}
         
    \end{matrix}{}\right)
    \]
    Define $A_k=B_KU_KT_K$, then we have $A_k(\Omega,p_k)\in\mathbb{K}_2$ by our construction. Since there is no non-trivial analytic subset at $p$ in $\partial\Omega$, we know that $1/\lambda_{i,k}\rightarrow\infty$ as $k\rightarrow\infty$ for each $i=1,2$. After passing to a subsequence, we know that $A_k\Omega$ converges to $\Omega_{\infty}\in\mathbb{X}_2$. 
\smallskip

    Now we analyse the properties of domain $\Omega_{\infty}$. From \cite[Section 3.1]{pf}, we know that the Taylor expansion of $r_1,r_2$ can be written as follows:
    \begin{equation}\label{eq}
    \begin{aligned}
    &r_1(z_1,z_2)=-\operatorname{Im}z_1+h_1(z_2)+O(z_1^2,z_1z_2),\\
    &r_2(z_1,z_2)=-\operatorname{Im}z_2+h_1(z_1)+O(z_2^2,z_1z_2),
    \end{aligned}
    \end{equation}
    where both $h_1$ and $h_2$ are linear. If we let
    \[
    U_k^{-1}B_k^{-1}=\left(
    \begin{matrix}    
        a_{1,k} & a_{2,k} \\
        b_{1.k} & b_{2,k}
    \end{matrix}{}
    \right),
    \]
    then $a_{i,k}\rightarrow0,b_{i,k}\rightarrow0$ as $k\rightarrow\infty$ for $i=1,2$.
    Suppose $p_k=(p_{1,k},p_{2,k})$, then for arbitrary $r>0$, there exists $k_0>0$ such that for any $k\geq k_0$ the domain $A_k(\Omega)\cap B_r(0)$ is represented by
    \begin{align*}
        &\operatorname{2Im}(a_{1,k}w_1+a_{2,k}w_2+p_{1,k})>h_1(b_{1,k}w_1+b_{2,k}w_2+p_{2,k})\\
        &+O((a_{1,k}w_1+a_{2,k}w_2+p_{1,k})^2,(a_{1,k}w_1+a_{2,k}w_2+p_{1,k})(b_{1,k}w_1+b_{2,k}w_2+p_{2,k})),\\
        &\operatorname{2Im}(b_{1,k}w_1+b_{2,k}w_2+p_{2,k})>h_2(a_{1,k}w_1+a_{2,k}w_2+p_{1,k})\\
        &+O((b_{1,k}w_1+b_{2,k}w_2+p_{2,k})^2,(a_{1,k}w_1+a_{2,k}w_2+p_{1,k})(b_{1,k}w_1+b_{2,k}w_2+p_{2,k})).
    \end{align*}
Since $A_k\Omega$ converges to $\Omega_{\infty}$ in the local Hausdorff distance, combining with a similar argument that in \cite[Section 3.1]{pf}, we know that the domain $\Omega_{\infty}$ is described by
\begin{equation}\label{md}
\begin{aligned}
    &\operatorname{Im}(B_1w_1+B_2w_2)+\alpha_1>\operatorname{Im}(B_1'w_1+B_2'w_2)+\alpha_2\\
    &\operatorname{Im}(C_1w_1+C_2w_2)+\beta_1>\operatorname{Im}(C_1'w_1+C_2'w_2)+\beta_2
\end{aligned}
\end{equation}
for some constants $B_i,B_i',C_i',C_i\in\mathbb{C}$ and $\alpha_i>0,\beta_i>0$ with $i=1,2$.
\smallskip

For each $k$, there exists $v_k\in\mathbb{C}$ with $|v_k|\leq1$ such that $(e_1,\beta_k)$ are $\Omega_k$-supporting for $\Omega_k=A_k\Omega$ and $\beta_k=(v_k,1)$. And the map
\[
F_k(z):=\left(\frac{\langle z,e_1\rangle}{2-\langle z,e_1\rangle},\frac{\langle z,\beta_k\rangle}{2-\langle z,\beta_k\rangle}\right):\Omega_k\rightarrow\mathbb{D}^2
\]
is a holomorphic embedding. After passing to a subsequence, we may assume that $\beta_k\rightarrow\beta$. Since each $A_k\Omega$ is contained in $\{z\in\mathbb{C}^2:\operatorname{Re}\langle z,e_1\rangle<1,\operatorname{Re}\langle z,\beta_k\rangle<1\}$, then $\Omega_{\infty}$ is contained in 
$E:=\{z\in\mathbb{C}^2:\operatorname{Re}\langle z,e_1\rangle<1,\operatorname{Re}\langle z,\beta\rangle<1\}$. From our construction of $A_k\Omega$, we know that $(1,0),(0,1)\in\partial (A_k\Omega)$, and this means that $(1,0),(0,1)\in\partial\Omega_{\infty}$. Together with (\ref{md}), this forces that $\Omega_{\infty}=E$. 
\smallskip

Let $E$ be as above, then $E$ is biholormorphic to $\mathbb{D}^2$ via the biholomorphism
\[
F(z):=\left(\frac{\langle z,e_1\rangle}{2-\langle z,e_1\rangle},\frac{\langle z,\beta\rangle}{2-\langle z,\beta\rangle}\right):E\rightarrow\mathbb{D}^2.
\]
Since $(\Omega_k,0)$ converges to $(\Omega_{\infty},0)$ in $\mathbb{X}_{2,0}$ and $F_k$ converges to $F$ uniformly in compact subsets of $\Omega_{\infty}$, we deduce that $\lim\limits_{k\rightarrow\infty}T_{\Omega}(p_k)=1$. Indeed, for each fixed $r>0$, $E':=F^{-1}(D_r(0))$ is compactly contained in $E$. Choosing an open subset $V\subset\subset E$ such that $E'$ is compactly contained in $V$, then $F_k(\overline{V})$ converges to $F(\overline{V})$, and hence $F_k^{-1}(D_r(0))\subset\Omega_k$ for all large $k$.
\smallskip

Due to the arbitrariness of $p_k$, we obtain that $\lim\limits_{z\rightarrow p}T_{\Omega}(z)=1$. To see this, we assume that there exists $q_j\in\Omega$ converges to $p$ such that
\[
\lim_{j\rightarrow\infty}T_{\Omega}(q_j)=c<1.
\]
By the previous argument, we know that there exists a subsequence $q_{j_k}$ such that 
\[
\lim_{k\rightarrow\infty}T_{\Omega}(q_{j_k})=1,
\]
this contradicts our assumption and completes the proof.
\end{proof}

From \cite{fol}, we know that a Levi flat hypersurface can be foliated by one dimensional holomorphic disks.

\begin{proposition}\label{foli}
    Let $\Omega:=\{z\in\mathbb{C}^n:r(z)<0\}$ be a bounded domain with $C^2$ smooth boundary. For a boundary point $p\in\partial\Omega$, if there exists a neighborhood $U$ of $p$ such that any point in $U\cap\partial\Omega$ is Levi flat, then there exists a non-constant holomorphic map $f:\mathbb{D}\rightarrow\partial\Omega$ such that $f(0)=p$.
\end{proposition}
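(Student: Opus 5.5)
The plan is to reduce Proposition \ref{foli} to the classical Frobenius-type description of Levi flat hypersurfaces cited from \cite{fol}: a $C^2$-smooth real hypersurface that is Levi flat at every point of an open piece is locally foliated by complex hypersurfaces, and in particular the leaf through each point contains a non-trivial holomorphic disk. The proof thus amounts to verifying that the hypotheses put us in that setting near $p$, and then extracting a one-dimensional disk through $p$ from the leaf.

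The first step is a normalization. Since $\partial\Omega$ is $C^2$ with $dr\neq0$ near $p$, shrink $U$ so that $M:=U\cap\partial\Omega$ is a connected embedded $C^2$ real hypersurface which is Levi flat at every point. Consider the complex tangent distribution $H_q M=T_qM\cap J T_qM$, $q\in M$, which has real rank $2n-2$ and is $C^1$ because $dr$ is $C^1$. Levi flatness at each $q$ says precisely that the Levi form vanishes on $H_qM$. This is equivalent to $[X,Y]\in HM$ for all $C^1$ sections $X,Y$ of $HM$, since the component of $[X,JX]$ transverse to $HM$ is computed by the Levi form. Hence $HM$ is involutive, and the $C^1$ Frobenius theorem gives through $p$ an integral $C^1$ submanifold $L\subset M$ of real dimension $2n-2$ with $T_qL=H_qM$ for $q\in L$. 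Each such tangent space is $J$-invariant, so $L$ is a complex submanifold of dimension $n-1\ge1$. This is the step relying on \cite{fol}, and I expect it to be the main obstacle, because $HM$ is only $C^1$ when $r\in C^2$. One has to check that the $C^1$ version of Frobenius applies, and that a $C^1$ real submanifold with $J$-invariant tangent spaces is a genuine complex manifold. The latter holds by writing $L$ locally as a graph over its tangent plane and noting that the graph function satisfies the Cauchy--Riemann equations, hence is holomorphic.

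Finally, take a local holomorphic parametrization $\phi$ of $L$ near $p$ from a ball in $\mathbb{C}^{n-1}$ with $\phi(0)=p$, and a small complex line direction $a\neq0$. Define
\[
f(\zeta)=\phi(\varepsilon a\zeta),\qquad \zeta\in\mathbb{D},
\]
with $\varepsilon>0$ small enough that the image stays in $L\subset\partial\Omega$. Then $f$ is holomorphic and non-constant because $\phi$ is an immersion, and it satisfies $f(0)=p$ and $f(\mathbb{D})\subset\partial\Omega$, as required.
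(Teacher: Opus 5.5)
Your proposal is correct and follows the paper's route. The paper gives no argument beyond citing \cite{fol} for the fact that a Levi flat hypersurface is foliated by complex leaves, and your sketch is the standard proof of that cited fact: Levi flatness $\Leftrightarrow$ involutivity of $HM$, the $C^1$ Frobenius theorem on the $C^2$ hypersurface, $J$-invariant tangent spaces forcing each leaf to be a complex hypersurface via the Cauchy--Riemann equations, and finally restricting a leaf parametrization to a complex line through $p$.
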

For a convex domain $\Omega$, we recall that if there exists a holomorphic map $f:\mathbb{D}\rightarrow \partial\Omega$ containing $p\in\partial\Omega$, then there exists an affine disk contained in $\partial\Omega$ through $p$. Now we prove the smooth part of Theorem \ref{main}, which can be deduced from the following theorem.

\begin{thm}\label{sp}
   Suppose $\Omega\subset\mathbb{C}^2$ is a bounded convex domain, and $p\in\partial\Omega$ is a boundary point. If there exists a neighborhood $U$ of $p$ such that $\partial\Omega$ is Levi flat on $U\cap\partial\Omega$, then for each $p_k\in\Omega$ converges to $p$, there exists $A_k\in\operatorname{Aff}(\mathbb{C}^2)$ such that, after passing to a subsequence, $A_k(
\Omega,p_k)$ converges to $(\Omega_{\infty},0)$ in $\mathbb{X}_2$ and $\Omega_{\infty}$ is biholomorphic to $\mathbb{D}^2$ .
\end{thm}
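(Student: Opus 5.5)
Let $p_k\to p$ and normalize as in the proof of Theorem \ref{ls}. By Theorem \ref{com}(2) choose $A_k\in\operatorname{Aff}(\mathbb{C}^2)$ with $A_k(\Omega,p_k)\in\mathbb{K}_2$, built from translation, unitary rotation and diagonal scaling exactly as before. By Theorem \ref{com}(1), after passing to a subsequence, $A_k(\Omega,p_k)\to(\Omega_\infty,0)$ in $\mathbb{X}_{2,0}$. It then remains to identify $\Omega_\infty$ as an affine image of the tube $\{\operatorname{Re}\langle z,v_1\rangle<1,\operatorname{Re}\langle z,v_2\rangle<1\}$. That domain is biholomorphic to $\mathbb{D}^2$ by the embedding lemma at the end of Section \ref{sec3}.

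\textbf{Local structure of $\partial\Omega$.} By Proposition \ref{foli} and the remark after it, every $q\in U\cap\partial\Omega$ lies in an affine complex disk contained in $\partial\Omega$. By convexity this disk lies in the complex tangent line $T^{\mathbb{C}}_q\partial\Omega$. Shrinking $U$ and rotating so that $p=0$ with inner normal $\operatorname{Im}z_1$, I would show that near $p$ the boundary is a union of complex lines $\{z_1=c(t)\}$. These lines must be parallel, since two non-parallel complex lines in $\mathbb{C}^2$ intersect, while distinct leaves of a convex Levi flat hypersurface cannot. Hence, locally,
\[
\Omega\cap U=\{(z_1,z_2):\operatorname{Im}z_1>\phi(\operatorname{Re}z_1)\}\cap U
\]
for a $C^2$ convex function $\phi$ with $\phi(0)=\phi'(0)=0$. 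So $\Omega$ is locally a product of a planar convex domain with a disk in the $z_2$-direction. This parallelism and product structure is the step I expect to be the main obstacle. It needs a careful argument that the tangent complex directions are locally constant, for instance by differentiating the Levi flat condition along the real normal direction and using convexity of the defining function.

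\textbf{Computing the limit.} Write $p_k=(p_{1,k},p_{2,k})$ and $\delta_k=\operatorname{dist}(p_k,\partial\Omega)\to0$. In the $z_1$-direction the scaling is $\lambda_{1,k}\approx\delta_k$. Rescaling by $\delta_k$, the planar domain $\{\operatorname{Im}z_1>\phi(\operatorname{Re}z_1)\}$, viewed from $p_{1,k}$, converges in local Hausdorff distance to a half-plane: $\phi\in C^2$ gives $\phi(\delta_k x)/\delta_k\to0$ locally uniformly, even after re-centering at $\operatorname{Re}p_{1,k}$, since $\phi'$ is uniformly continuous. In the $z_2$-direction, $\lambda_{2,k}$ is the distance to the boundary within the orthogonal complex line. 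This distance stays bounded below, because the nearby boundary is the leaf direction, or at worst is controlled by a far part of $\partial\Omega$. After scaling by $1/\lambda_{2,k}$, the slice $\Omega\cap\{z_1=p_{1,k}\}$ converges to a convex domain $G\subset\mathbb{C}$ with $\mathbb{D}\subset G$ and $1\in\partial G$.

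\textbf{The main gap and the fix.} $G$ need not be a half-plane, and $\Omega_\infty=H\times G$ with $H$ a half-plane. But the statement only asks $\Omega_\infty$ to be biholomorphic to $\mathbb{D}^2$. This holds whenever $G\neq\mathbb{C}$ is simply connected, which is true since $\Omega_\infty\in\mathbb{X}_2$ contains no complex line, by the Riemann mapping theorem. So $\Omega_\infty\cong\mathbb{H}\times\mathbb{D}\cong\mathbb{D}^2$. If one wants $\Omega_\infty$ to be a tube as in Theorem \ref{ls}, that is unnecessary here. The real work is therefore the product structure step together with controlling that the off-diagonal mixing in $U_k$ does not destroy the product form in the limit. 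Because the leaves are parallel, the vector $z_{1,k}$ realizing $\lambda_{1,k}$ becomes asymptotically normal, so $U_k$ converges to a unitary map preserving the splitting. This is what I would verify using the $C^2$ bound on $\phi$.
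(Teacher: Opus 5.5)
\textbf{The gap is the identification $\Omega_\infty=H\times G$, not the step you flagged.}

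Parallelism of the leaves is true and easy. The second fundamental form is positive semidefinite, and its trace on $T^{\mathbb{C}}_q$ is the Levi form. So $T^{\mathbb{C}}_q$ lies in its kernel, $d\nu$ takes values in $\mathbb{R}\,i\nu$, and the complex normal line is locally constant. Your intersection argument alone would not give this, since non-parallel lines may meet outside $U$. Once the leaves are parallel, $U_k$ is diagonal for large $k$, so the mixing you worried about does not occur.

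The real problem is that the product structure only describes $\Omega$ inside $U$. As you note, $\lambda_{2,k}$ stays bounded below, so the $z_2$-direction is not blown up. Hence $\Omega_\infty$ is assembled from entire slices $\{z_2:(z_1,z_2)\in\Omega\}$ with $|z_1-p_{1,k}|=O(\delta_k)$, including their parts far from $p$, where nothing is assumed. Your half-plane computation only fixes the $w_1$-fibres over a small disc in the $w_2$-plane. Nothing forces the $w_2$-fibres to be independent of $w_1$. That would require the faces of $\overline\Omega$ at boundary points near $p$ to converge to the face at $p$, and local Levi flatness does not give this.

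In fact the statement fails as posed. Let $G_1=\{|z_1-i|<1\}$ and
\[
\Omega=\Big\{(z_1,z_2): z_1\in G_1,\ |z_2|<2-\frac{|z_1|^2}{2\operatorname{Im}z_1}\Big\},
\]
the interior of the convex hull of $\{0\}\times 2\overline{\mathbb{D}}$ and $\overline{G_1}\times\overline{\mathbb{D}}$. Since $\Omega\cap(\mathbb{C}\times\mathbb{D})=G_1\times\mathbb{D}$, $\partial\Omega$ is real analytic and Levi flat near $p=0$. However, the face at $p$ is $\{0\}\times 2\overline{\mathbb{D}}$, while the faces at $(c,0)$ with $c\in\partial G_1\setminus\{0\}$ are $\{c\}\times\overline{\mathbb{D}}$. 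Take $c_k=x_k+iy_k\in\partial G_1$ with $x_k\to 0$, and $p_k=(c_k+y_k(i-c_k),0)$. Your construction gives $\lambda_{1,k}=y_k$, $\lambda_{2,k}=3/2$, and
\[
\Omega_\infty=\big\{(w_1,w_2):\operatorname{Re}w_1<1,\ |w_2|<\rho(\operatorname{Re}w_1)\big\},\qquad \rho(u)=\tfrac{2}{3}\Big(2-\frac{1}{2-u}\Big).
\]
Its fibres shrink from radius $4/3$ to $2/3$, so it is not a product.

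Since $\rho'<0$ and $\rho''<0$, $\log\rho$ is strictly concave, so the boundary points with $|w_2|=\rho(\operatorname{Re}w_1)$ are strongly pseudoconvex. Rescaling at such a point yields a domain biholomorphic to $\mathbb{B}^2$. On the other hand, every rescaled limit of a domain in $\mathbb{X}_2$ biholomorphic to the homogeneous $\mathbb{D}^2$ is again biholomorphic to $\mathbb{D}^2$. Hence $\Omega_\infty\not\cong\mathbb{D}^2$. No other choice of $A_k$ helps, because rescaled limits of one sequence are affinely equivalent: the connecting affine maps are Kobayashi isometries and subconverge.

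The paper's proof has the same hole. Claim 1 in the proof of Theorem \ref{sp} asserts $C_{2,j}\to C_2$ without justification, which is exactly convergence of faces; in this example $C_{2,j}=\mathbb{D}$ while $C_2=2\mathbb{D}$. A global hypothesis giving continuity of faces at $p$ is needed. Under such a hypothesis, the slices at depth $O(\delta_k)$ converge to the face, and the limit is indeed $H\times G$ with $G$ the rescaled face.
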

\begin{proof}

 Let $p\in\partial\Omega$ satisfy the hypothesis  and let $p_j\in\Omega$ converge to $p$. Without loss of generality we assume that $p=(0,0)$ and
\begin{enumerate}
    \item  $\Omega\subset\left\{(z_1,z_2)\in\mathbb{C}^2:\operatorname{Im}z_1>0\right\}$,

\item $0\times\mathbb{D}\subset\partial\Omega$,

\item  $(i,0)\in\Omega$.
\end{enumerate}
For each $j$, we can find $q_j\in\partial\Omega$ such that
\[
|q_j-p_j|=\min_{x\in\partial\Omega}|p_j-x|.
\]
Note that since $\partial\Omega$ is $C^2$-smooth around $p$, then such $q_j$ is unique determined for large $j$, and $p_j$ lies along the interior normal direction at $q_j$.

Moreover, we can choose a unitary translation $T_j:\mathbb{C}^2\rightarrow\mathbb{C}^2$ such that the affine translation $\psi_j(z):=T_j(z-q_j)$ satisfies
\[
\psi_j(\Omega)\subset\left\{(z_1,z_2)\in\mathbb{C}^2:\operatorname{Im}z_1>0\right\}
\]
and $\psi_j(p_j)=(\delta_ji,0)$ with some constant $\delta_j>0$ for large $j$. Because $|p_j-q_j|\rightarrow0$, we have $\delta_j\rightarrow0$.

For each large $j$, we pick $\xi_j\in\left(\{\delta_ji\}\times\mathbb{C}\right)\cap\psi_j(\Omega)$ such that 
\[
|\xi_j-\psi_j(p_j)|=\min\left\{|\xi-\psi_j(p_j)|:\xi\in\left(\{\delta_ji\}\times\mathbb{C}\right)\cap\psi_j(\Omega)\right\}.
\]
The boundedness of $\Omega$ implies that $\limsup\limits_{j\rightarrow\infty}|\xi_j-\psi_j(p_j)|<\infty$. Suppose $\xi_j=(\delta_ji,a_j)$, after passing to a subsequence, we assume $a_j\rightarrow a$. Then we have 
\[
\lim\limits_{j\rightarrow\infty}|\xi_j-\psi_j(p_j)|=\lim\limits_{j\rightarrow\infty}|a_n|=|a|
\]
and $(0,a)\in\partial\Omega$. Note that after passing to a subsequence, $\psi_j$ converges to the idendity map $I_2:\mathbb{C}^2\rightarrow\mathbb{C}^2$. Since $\{0\}\times\mathbb{D}\subset\partial\Omega$ and $\Omega$ is convex, we may assume that $|a|\geq 1$. 

Let 
\begin{align*}
    A_j:=\left(\begin{matrix}
         \frac{1}{\delta_j}&0  \\
         0&a_j^{-1} 
    \end{matrix}\right).    
\end{align*}
and the affine map $T\in\operatorname{Aff}(\mathbb{C}^2)$
\[
T(z):=(i(z_1-i),z_2),
\]
then $T\circ A_j(\psi_j(\Omega),\psi_j(p_j))\in\mathbb{K}_2$, which means that $A_j\psi_j(\Omega)$ converges to some $\Omega_1$ in $\mathbb{X}_2.$ 
\smallskip

Let $C_2\subset\mathbb{C}$ be the open convex set  such that
\[
\{0\}\times\overline{C_2}=(\{0\}\times\mathbb{C})\cap\partial\Omega.
\]
We define $D_2:=a^{-1}C_2$ and $H:=\{z\in\mathbb{C}:\operatorname{Im}z>0\}$.
\medskip

\noindent\textbf{Claim 1}:  $\{0\}\times D_2\subset\partial\Omega_1$ and $\Omega_1\subset H\times D_2$.
\smallskip

If $(x,y)\in\Omega_1$, then there exists $(x_j,y_j)\in\Omega$ such that $A_j\psi_j(x_j,y_j)\rightarrow(x,y)$. Let $(z_j,w_j)=\psi_j(x_j,y_j)$, then we obtain $\frac{z_j}{\delta_j}\rightarrow x$ and $a_j^{-1}w_j\rightarrow y$, it means that $z_j\rightarrow0$ and $w_j\rightarrow ay$. Since $\psi_j\rightarrow I_2$, then $x_i\rightarrow 0$ and $y_j\rightarrow ay$. Hence $y\in a^{-1}C_2$ and $\Omega_1\subset\mathbb{C}\times D_2$. As  $\psi_j(\Omega)\subset H\times\mathbb{C}$ for each $j$, we also have $\Omega_1\subset H\times\mathbb{C}$ and 
\[
\Omega_1\subset (H\times\mathbb{C})\cap(\mathbb{C}\times D_2)=H\times D_2.
\]
 Moreover, let $C_{2,j}$ be the open convex set of $\mathbb{C}$ such that
 \[
 \{0\}\times\overline{C_{2,j}}=(\{0\}\times\mathbb{C})\cap\psi_j(\Omega),
 \]
 then $C_{2,j}$ converges to $C_2$ in $\mathbb{X}_1$. Because $ A_j\left(\{0\}\times C_{2,j}\right)=\{0\}\times a_j^{-1}C_{2,j}$ and $a_j^{-1}\rightarrow a^{-1}$, then $\{0\}\times D_2\subset\overline{\Omega}_1$. Together with  $\Omega_1\subset H\times D_2$, then $\{0\}\times D_2\subset\partial\Omega_1$.
\medskip

 Now let $C_{1,j}\subset\mathbb{C}$ be the open convex set such that 
 \[
 C_{1,j}\times\{0\}=(\mathbb{C}\times\{0\})\cap\psi_j(\Omega).
 \]
 Suppose $C_1\subset\mathbb{C}$ is the open convex set such that 
 \[
 C_1\times\{0\}=(\mathbb{C}\times\{0\})\cap\Omega,
 \]
 then $C_{1,j}$ converges to $C_1$ in $\mathbb{X}_1$.  Since $ B_r(i)\subset\delta_{j}^{-1}C_{1,j}\subset H$ holds for large $j$ with some $r>0$, then, after passing to a subsequence, $\delta_j^{-1}C_{1,j}$ converges to some open convex set $D_1\in \mathbb{X}_1$.

\medskip

\noindent\textbf{Claim 2}: $H\times D_2=\Omega_1$.
\smallskip

By construction, we have
\[
(\delta_j^{-1}C_{1,j})\times\{0\}\subset A_j\psi_j(\Omega),    
\]
which implies that $D_1\times\{0\}\subset\overline{\Omega}_1$ from the definition of local Hausdorff convergence.

Now we prove $H=D_1$. Let 
\[
E:=\bigcup_{j=1}^{\infty}\delta_j^{-1}C_1,
\]
then $E\subset D_1$. To see this, suppose $x\in E$, then $x\in\delta_j^{-1}C_1$ for some $j\geq1$. This means that $\delta_jx\in C_1$. Since $C_{1,j}$ converges to $C_1$ in local Hausdorff distance, then $\delta_jx\in C_{1,k}$ for large $k$. We may assume that $k\geq j$ and $\delta_k^{-1}\geq\delta_j^{-1}$, then $x\in\delta_k^{-1}C_{1,k}$ since $C_{1,k}$ is convex and $0\in\partial C_{1,k}$. Hence, we obtain that $x\in D_1$. Moreover, since $\partial\Omega$ is $C^2$-smooth around $0$, then $C_1$ is $C^2$-smooth around $p$ and this deduces that $E=H$. Therefore, we have $H=E\subset D_1\subset H$.

For any $(x,y)\in D_1\times D_2$, since $D_1$ is a cone and $(\delta_j^{-1}x,0)\in\overline{\Omega}_1$ for each $j$, then
\[
(x,y)=\lim_{j\rightarrow\infty}\frac{1}{\delta_j^{-1}}(\delta_j^{-1}x,0)+\frac{\delta_j^{-1}-1}{\delta_j^{-1}}(0,y)\in\overline{\Omega}_1
\]
provided $(0,y)\in\overline{\Omega}_1$. Thus $D_1\times D_2\subset\overline{\Omega}_1$. Because $\Omega_1$ is convex and open, then we obtain $D_1\times D_2\subset\Omega_2$. Hence 
\[
D_1\times D_2\subset\Omega_1\subset H\times D_2
\]
and
\[
\Omega_1=H\times D_2,
\]
which completes the proof.
\end{proof}
With Theorem \ref{sp} in hand, we now prove the smooth part of Theorem~\ref{main}.
\begin{thm}
    Let $\Omega\subset\mathbb{C}^2$ be a bounded generic convex domain with piecewise Levi flat boundary. If $p\in\partial\Omega$ is a smooth point, then 
    \[
    \lim_{z\rightarrow p}T_{\Omega}(z)=1.
    \]
\end{thm}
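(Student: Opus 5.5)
The plan is to reduce the smooth case to Theorem \ref{sp} and then repeat the final argument of Theorem \ref{ls}. Let $p\in\partial\Omega$ be a smooth point, so $r_p=1$ and exactly one defining function vanishes at $p$, say $\rho_1$. Since $\rho_2,\dots,\rho_k$ are continuous and strictly negative at $p$, there is a neighborhood $U$ of $p$ with $U\cap\partial\Omega=U\cap\{\rho_1=0\}$. This set is a $C^2$ Levi flat hypersurface by assumption. Hence $\partial\Omega$ is $C^2$-smooth and Levi flat at every point of $U\cap\partial\Omega$, and Theorem \ref{sp} applies at $p$. (Proposition \ref{foli} together with convexity supplies the affine disk in $\partial\Omega$ through $p$ that was used in its proof.)

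Next, argue by contradiction. Suppose there is a sequence $p_k\to p$ with $T_\Omega(p_k)\to c<1$. By Theorem \ref{sp}, after passing to a subsequence there are $A_k\in\operatorname{Aff}(\mathbb{C}^2)$ such that $(\Omega_k,x_k):=A_k(\Omega,p_k)\to(\Omega_\infty,x_\infty)$ in $\mathbb{X}_{2,0}$, with $\Omega_\infty=H\times D_2$ biholomorphic to $\mathbb{D}^2$. Biholomorphism invariance gives $T_{\Omega_k}(x_k)=T_\Omega(p_k)$. It therefore suffices to prove $\liminf_k T_{\Omega_k}(x_k)\ge 1$.

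Proposition \ref{uc} gives only the inequality in the wrong direction, so a lower bound has to be built by hand, copying the end of the proof of Theorem \ref{ls}. Since $\Omega_k\in\mathbb{X}_2$ and we may normalize into $\mathbb{K}_2$ (Theorem \ref{com}), Lemma \ref{sup} yields $\Omega_k$-supporting vectors $v_1^k,v_2^k$ in a bounded set, and the associated maps $F_k$ of the embedding lemma are holomorphic embeddings $\Omega_k\to\mathbb{D}^2$. Passing to a subsequence, $v_j^k\to v_j$ and $F_k\to F$ locally uniformly, where $\Omega_\infty\subset E:=\{\operatorname{Re}\langle z,v_j\rangle<1\}$. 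The key point is to show $\Omega_\infty=E$. The limit $H\times D_2$ is an intersection of two half-spaces only when $D_2$ is a half-plane. If it is not, I would replace $F$ by a product map $\phi_1\times\phi_2$, where $\phi_1:H\to\mathbb{D}$ is a Möbius map and $\phi_2:D_2\to\mathbb{D}$ is the Riemann map. I would then approximate using the slices of $\Omega_k$, whose Riemann maps converge by Proposition \ref{kc}. After a final affine adjustment sending $x_k$ to $0$, this gives embeddings $G_k:\Omega_k\to\mathbb{D}^2$ with $G_k(x_k)=0$ and $G_k\to G$, where $G$ is a biholomorphism $\Omega_\infty\to\mathbb{D}^2$.

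Once such $G_k$ are available, fix $r<1$. The set $G^{-1}(\overline{D_r(0)})$ is compact in $\Omega_\infty$, hence lies in $\Omega_k$ for large $k$. Uniform convergence combined with a degree or Rouché argument, exactly as in Theorem \ref{ls}, gives $D_{r'}(0)\subset G_k(\Omega_k)$ for every $r'<r$ and all large $k$. This yields $T_{\Omega_k}(x_k)\ge r'$, and letting $r'\to1$ contradicts $c<1$. The main obstacle is constructing injective maps $G_k$ into $\mathbb{D}^2$ converging to a biholomorphism of the limit when $\Omega_\infty$ is a product of a half-plane with a general convex set rather than a wedge of two half-spaces. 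If the slice-Riemann-map construction fails to be injective on all of $\Omega_k$, my first fallback would be to show that the Levi-flat structure forces $D_2$ to be a half-plane after an extra rescaling in the $z_2$ direction, so that the supporting-vector maps $F_k$ suffice directly.
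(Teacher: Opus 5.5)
Your reduction to Theorem~\ref{sp} and your contradiction set-up are fine. You are also right that the supporting-vector maps of Theorem~\ref{ls} cannot be reused: $D_2=a^{-1}C_2$ is bounded, so $H\times D_2$ is never the wedge $E$.

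The gap is the decisive step, which you only assert. You need injective holomorphic maps $G_k$, defined on \emph{all} of $\Omega_k$, with values in $\mathbb{D}^2$, converging to a biholomorphism $\Omega_\infty\to\mathbb{D}^2$. The slice construction does not give this.
\begin{itemize}
\item Write $\pi_2$ for projection to the $w$-coordinate. In the notation of the proof of Theorem~\ref{sp}, $\pi_2(\Omega_k)=a_k^{-1}\pi_2(\psi_k\Omega)$, which is larger than any single slice. So $\phi_1(z)\times\phi_{2,k}(w)$, with $\phi_{2,k}$ the Riemann map of a slice, is not even defined on $\Omega_k$.
\item Replacing the slice by $\pi_2(\Omega_k)$ does give an embedding. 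However, the rescaling in $w$ is bounded, so the parts of $\Omega$ away from the leaf escape to infinity in $z$ while keeping their full $w$-range. Hence $\pi_2(\Omega_k)\to a^{-1}\pi_2(\Omega)$, which in general strictly contains $D_2$.
\item The limit map is then $(\phi_1(z),h(w))$, with $h$ the Riemann map of $a^{-1}\pi_2(\Omega)$. It sends $\Omega_\infty$ onto $\mathbb{D}\times h(D_2)$ with $h(D_2)\subsetneq\mathbb{D}$, so no lower bound tending to $1$ follows.
\item Your fallback is ruled out as well. The base point stays at distance comparable to $1$ from $\partial D_2$, so any further blow-up in $z_2$ makes the slices converge to $\mathbb{C}$. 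The limit then contains complex lines and never becomes a half-plane.
\end{itemize}

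The paper globalizes by an \emph{outer} approximation instead. It applies $F(z,w)=\bigl(\frac{z-i}{z+i},w\bigr)$, which is defined on all of $\Omega_j\subset H\times\mathbb{C}$. It then chooses simply connected $E_k\supset\overline{D_2}$ decreasing to $D_2$ in the kernel sense, and uses the product biholomorphisms $(z,g_k(w))\colon\mathbb{D}\times E_k\to\mathbb{D}^2$. Injectivity is automatic, and $g_k\to g$ by Proposition~\ref{kc}.

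This needs $F(\Omega_j)\subset\mathbb{D}\times E_k$ for large $j$, which essentially means $\pi_2(\Omega)\subset\overline{C_2}$. That is exactly the condition whose failure breaks your construction. The paper derives it from a Hausdorff-convergence claim that controls only interior points of the limit set. Points of $\Omega_j$ whose first coordinate tends to $\infty$ instead accumulate on $\{1\}\times a^{-1}\overline{\pi_2(\Omega)}$, which that claim does not account for.

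For example, take $\Omega=\{|z_1|<1,\,|z_2|<1,\,|z_2-z_1|<3/2\}$, which is bounded, convex, generic and piecewise Levi flat, and let $p=(1,0)$, a smooth point. Then $\pi_2(\Omega)=\mathbb{D}$, while the leaf through $p$ is the lens $\{|z_2|<1,\,|z_2-1|<3/2\}$.

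So the globalization problem you flagged is the real difficulty here. Your sketch does not resolve it. The paper's argument as written resolves it only under this extra projection condition, which does hold in its example $\mathbb{B}^2\cap\{\operatorname{Im}z>0\}$.
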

\begin{proof}
Let $\Omega,A_j,\psi_j,H,D_2$ be as in the proof of Theorem~\ref{sp}, and set $\Omega_j=A_j\psi_j\Omega$. According to the proof of Theorem \ref{sp}, we can define the holomorphic embedding $F:\Omega_j\rightarrow\mathbb{D}\times\mathbb{C}$ by
\[
F(z,w)=\left(\frac{z-i}{z+i},w\right),
\]
then $F(i,0)=(0,0)$ and $F(\Omega_j)$ is contained in $\mathbb{D}\times B_r(0)$ with some $r>0$, independently of $j$. 
\smallskip

\noindent\textbf{Claim:} If we denote $\Sigma_j:=F(\Omega_j)$, then, after passing to a subsequence, $\overline{\Sigma}_j$ converges to $\overline{\Sigma}$ in Hausdorff distance with $\Sigma:=F(H\times D_2)=\mathbb{D}\times D_2$.
\smallskip

It is enough to prove that: for any $\epsilon>0$, there exists $k_0>0$ such that
\[
\overline{\Sigma}_j\subset N(\overline{\Sigma},\epsilon),\ \operatorname{and}\  \overline{\Sigma}\subset N(\overline{\Sigma}_j,\epsilon),
\]
where $N(\cdot,\epsilon)$ is the $\epsilon$-neighborhood. Let $(x,y)\in\Sigma$. Since $\Omega_j$ converges to $H\times D_2$ in $\mathbb{X}_2$, then there exists $(x_j,y_j)\in\Omega_j$ such that $F(x_j,y_j)\rightarrow(x,y)$ . If we let $(z_j,w_j)=F(x_j,y_j)\in\Sigma_j$, then we know that $|(x,y)-(z_j,w_j)|<\epsilon$ for large $k$, this means that $\overline{\Sigma}\subset N(\overline{\Sigma}_j,\epsilon)$ .

On the other hand, \cite[Chapter I.5, Lemma 5.13]{Metric99} shows that, after passing to a subsequence, $\overline{\Sigma}_j$ converges to a closed subset $\Sigma'\subset\overline{\mathbb{D}^2}$ with nonempty interior. For any interior point $z=(x,y)$ of $\Sigma'$, we know that there exists $z_j=(x_j,y_j)\in\Omega_j$ such that $F(x_j,y_j)\rightarrow(x,y)$. From the proof of Claim 1 in Theorem \ref{sp}, we know that $y\in D_2$ and this means that $\Sigma'\subset\overline{\Sigma}$. Consequently,
\[
\overline{\Sigma}_j\subset N(\Sigma',\epsilon)\subset N(\overline{\Sigma},\epsilon)
\]
holds for large $j$, and this deduces the claim.

\medskip

Since $D_2$ is a bounded simply connected domain in $\mathbb{C}$, then there exists a sequence bounded simply connected domains $E_k$ such that 
\[
E_{k+1}\subset E_k,\ D_2\subset\subset E_k\ \operatorname{for\ each}\ k
\]
and $E_k$ converges to $D_2$ in the sense of kernel convergence. From the Claim, for each $k$, there exists $j_0>0$ such that 
\[
\Sigma_j\subset \mathbb{D}\times E_k
\]
whenever $j\geq j_0$. By Riemann mapping theorem, there exists a unique biholomorphism $g_k: E_k\rightarrow\mathbb{D}$ such that $g_k(0)=0$ and $g_{k}'(0)>0$, which means that the map 
\[
f_k(z,w)=(z,g_k(w)):\mathbb{D}\times E_k\rightarrow\mathbb{D}^2
\]
is a biholomorphism. Suppose $g:D_2\rightarrow\mathbb{D}$ is the unique biholomorphism such that $g(0)=0$ and $g'(0)>0$, then, by Proposition \ref{kc}, $g_{j}$ converges to $g$ uniformly on compact subsets of $D_2$.

For each large $j$, we obtain a holomorphic embedding 
\[
f_j:\Sigma_j\rightarrow\mathbb{D}^2.
\]
More precisely, fix $r>0$ and let $G(z,w)=(z,g(w)):\mathbb{D}\times D_2\rightarrow\mathbb{D}^2$. Choosing a compact subset $K$ of $\mathbb{D}\times D_2$ such that $G^{-1}(D_r(0))\subset\subset K$. Since $\Omega_j$ converges to $H\times D_2$ in $\mathbb{X}_2$, then each compact subset of $H\times D_2$ is contained in $\Omega_j$ for large $j$.  Hence $K$ is contained in each $\Sigma_j$ for large $j$. Because $f_j$ converges to $G$ uniformly on compact subsets of $\Sigma$, then we know that $D_r(0)$ is contained in $f_j(\Sigma_j)$ for all large $j$. This means that
\[
\lim_{j\rightarrow\infty}T_{\Sigma_j}(0,0)=1,
\]
 and
 \[
 \lim_{j\rightarrow\infty}T_{\Omega_j}(i,0)=1.
 \]
Moreover, we have 
\[
\lim_{j\rightarrow\infty}T_{\Omega}(p_j)=1.
\]
The same contradiction argument as in Theorem~\ref{ls} finally yields that
\[
\lim_{z\rightarrow p}T_{\Omega}(z)=1,
\]
and this completes the proof.
\end{proof}

\smallskip
The scaling construction in Theorem~\ref{sp} together with \cite[Appendix~A.2]{tei} yields the following corollary.
\begin{cor}
    Let $\Omega\subset\mathbb{C}^2$ be a bounded convex domain, and $p\in\partial\Omega$ be a smooth boundary point. We assume there exists a neighborhood $U$ of $p$ such that $\partial\Omega$ is Levi flat on $U\cap\partial\Omega$. If there exists $f_j\in\operatorname{Aut}(\Omega)$ such that $f_j(q)\rightarrow p$ for some $q\in\Omega$, then $\Omega$ is biholomorphic to the bi-disk $\mathbb{D}^2$.
\end{cor}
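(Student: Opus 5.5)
We argue through the rescaling in Theorem \ref{sp} together with the standard fact (\cite[Appendix~A.2]{tei}) that a bounded convex domain is biholomorphic to any limit of its affine rescalings along an orbit. Fix $q\in\Omega$ and set $p_j:=f_j(q)\rightarrow p$. Theorem \ref{sp} gives affine maps $A_j\in\operatorname{Aff}(\mathbb{C}^2)$ (namely $A_j\psi_j$, composed with the fixed normalization $T$) such that, after passing to a subsequence, $A_j(\Omega,p_j)\rightarrow(\Omega_\infty,x_\infty)$ in $\mathbb{X}_{2,0}$. Here $\Omega_\infty=H\times D_2$ is biholomorphic to $\mathbb{D}^2$, since $H$ and $D_2$ are simply connected proper subdomains of $\mathbb{C}$.

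The goal is to show that $\Omega$ itself is biholomorphic to $\Omega_\infty$. Consider the injective holomorphic maps
\[
\Phi_j:=A_j\circ f_j:\Omega\rightarrow A_j\Omega,\qquad \Phi_j(q)=A_jp_j\rightarrow x_\infty .
\]

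\textbf{Step 1: normal family.} The domains $A_j\Omega$ converge in the local Hausdorff sense to $\Omega_\infty$, and each lies in the fixed half-space product $\{\operatorname{Re}\langle z,v_1\rangle<1,\operatorname{Re}\langle z,v_2\rangle<1\}$ coming from the $\mathbb{K}_2$ normalization, with the supporting vectors bounded by Lemma \ref{sup}. Composing with the embeddings $F$ of the preceding lemma, which map into $\mathbb{D}^2$, the family $\{F\circ\Phi_j\}$ is uniformly bounded. We can therefore extract a subsequence converging locally uniformly to a holomorphic map $\Phi:\Omega\rightarrow\overline{\Omega_\infty}$ with $\Phi(q)=x_\infty\in\Omega_\infty$.

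\textbf{Step 2: inverses.} Any compact $K\subset\Omega_\infty$ lies in $A_j\Omega$ for large $j$, so $\Phi_j^{-1}=f_j^{-1}\circ A_j^{-1}$ is defined on $K$ and maps into the bounded domain $\Omega$. Montel's theorem then gives a further subsequence with $\Phi_j^{-1}\rightarrow\Psi:\Omega_\infty\rightarrow\overline\Omega$. We have $\Psi(x_\infty)=\lim\Phi_j^{-1}(A_jp_j)=q$, and the maps converge uniformly near $x_\infty$ because $A_jp_j\rightarrow x_\infty$.

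\textbf{Step 3: biholomorphism.} Bounded convex domains are taut, so a holomorphic limit that sends an interior point to an interior point maps into the open domain. Hence $\Psi(\Omega_\infty)\subset\Omega$ and $\Phi(\Omega)\subset\Omega_\infty$. Passing to the limit in $\Phi_j\circ\Phi_j^{-1}=\mathrm{id}$ and $\Phi_j^{-1}\circ\Phi_j=\mathrm{id}$ on compacta gives $\Phi\circ\Psi=\mathrm{id}$ and $\Psi\circ\Phi=\mathrm{id}$. Thus $\Omega\cong\Omega_\infty\cong\mathbb{D}^2$.

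The main obstacle is Step 1. The maps $A_j$ blow up, so we cannot work with $\Phi_j$ in Euclidean coordinates directly; we need the limit domain $\Omega_\infty$ to contain no complex line so that the rescaled domains are uniformly Kobayashi hyperbolic. This holds because $\Omega_\infty=H\times D_2$ is bounded in the second variable and a half-plane in the first. We should also check that the convergence in Theorem \ref{sp} is genuinely in $\mathbb{X}_{2,0}$ with the base points $A_jp_j$ converging, which the normalization $T\circ A_j(\psi_j(\Omega),\psi_j(p_j))\in\mathbb{K}_2$ guarantees. Once this is in place, the remaining steps are the standard Frankel–Kim–Krantz scaling argument.
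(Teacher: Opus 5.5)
Your proposal is correct and takes the paper's route. The paper's proof is the one-line appeal to the rescaling of Theorem~\ref{sp} together with the standard scaling argument of \cite[Appendix~A.2]{tei}, and your Steps 1--3 simply spell that argument out.

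Two details are worth tightening.
\begin{itemize}
\item In Step 1 the embeddings $F=F_j$ depend on $j$, and boundedness of $F_j\circ\Phi_j$ does not by itself bound $\Phi_j$. Observe that any limit of $F_j\circ\Phi_j$ sends $q$ to $F(x_\infty)\in\mathbb{D}^2$, so by the maximum principle it maps into the open $\mathbb{D}^2$. Then $F_j^{-1}\to F^{-1}$ locally uniformly on $\mathbb{D}^2$ gives convergence of $\Phi_j$.
\item In Step 3, tautness does not apply directly, because $\Phi_j^{-1}$ is only defined on compact subsets of $\Omega_\infty$. Use instead a supporting-hyperplane maximum-principle argument.
\end{itemize}
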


\smallskip

Now we prove that the boundary behavior of $T_{\Omega}$ implies the Levi flatness of boundary points. Before proving Theorem \ref{elf}, we establish the following gap lemma, from which Theorem \ref{elf} follows directly.
\begin{lemma}\label{gap}
    For any $n\geq2$, there exists a constant $\epsilon=\epsilon(n)>0$ such that : if $\Omega$ is a convex domain in $\mathbb{X}_n$, then
    \[
    S_{\Omega}(z)+T_{\Omega}(z)<2-\epsilon
    \]
    holds for every $z\in\Omega$.
\end{lemma}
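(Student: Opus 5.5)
We argue by contradiction combined with the compactness of the space of normalized convex domains. Suppose no such $\epsilon$ exists. Then there are convex domains $\Omega_k\in\mathbb{X}_n$ and points $z_k\in\Omega_k$ with
\[
S_{\Omega_k}(z_k)+T_{\Omega_k}(z_k)\to 2 .
\]
Since both functions are at most $1$, this forces $S_{\Omega_k}(z_k)\to1$ and $T_{\Omega_k}(z_k)\to1$. Both squeezing functions are invariant under biholomorphisms, in particular under affine maps. So by Theorem \ref{com}(2) we may replace $(\Omega_k,z_k)$ by $A_k(\Omega_k,z_k)\in\mathbb{K}_n$. By Theorem \ref{com}(1), after passing to a subsequence, $(\Omega_k,z_k)\to(\Omega_\infty,z_\infty)$ in $\mathbb{X}_{n,0}$. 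Here $z_\infty=0$, since the normalization places the marked point at the origin, and $0\in\Omega_\infty$ because $\mathbb{D}e_j\subset\Omega_\infty$ for all $j$.

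Proposition \ref{uc} gives
\[
1=\limsup_k S_{\Omega_k}(z_k)\le S_{\Omega_\infty}(0)\le 1,
\]
and likewise $T_{\Omega_\infty}(0)=1$. We then use the standard fact that the suprema defining $S$ and $T$ are attained, by a normal families argument (Montel plus Hurwitz to keep injectivity). Hence $S_{\Omega_\infty}(0)=1$ gives a biholomorphism $\Omega_\infty\cong\mathbb{B}^n$, and $T_{\Omega_\infty}(0)=1$ gives $\Omega_\infty\cong\mathbb{D}^n$. Since $\mathbb{B}^n$ and $\mathbb{D}^n$ are not biholomorphic for $n\ge2$ (Poincaré), this is a contradiction. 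The lemma follows with $\epsilon(n)>0$; strictness of the inequality is automatic once $\epsilon$ is chosen slightly smaller.

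The main obstacle is the attainment step. From $S_{\Omega_\infty}(0)=1$ we get injective maps $f_m:\Omega_\infty\to\mathbb{B}^n$ with $f_m(0)=0$ and $B_{r_m}(0)\subset f_m(\Omega_\infty)$, $r_m\to1$. We pass to a limit $f$, which is uniform on compacts by Montel.
\begin{itemize}
\item \emph{Nondegeneracy.} The inverses $f_m^{-1}$ are defined on $B_{r_m}(0)$ and take values in the fixed domain $\Omega_\infty$. Here $\Omega_\infty$ is bounded: $Z_j\cap\Omega_\infty=\emptyset$ together with convexity bounds it in the normalized coordinates, as in Lemma \ref{sup}. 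So the inverses also form a normal family, with limit $h$ satisfying $f\circ h=\mathrm{id}$ on $\mathbb{B}^n$. Thus $f$ is non-degenerate and surjective onto $\mathbb{B}^n$.
\item \emph{Injectivity.} Hurwitz's theorem in several variables (injective limits with nonvanishing Jacobian at $0$) gives that $f$ is injective.
\end{itemize}
The same argument applies to $T$ with $D_r(0)$ in place of $B_r(0)$.

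If boundedness of $\Omega_\infty$ is an issue, since $\mathbb{X}_n$ only excludes complex lines, we instead use that $\Omega_\infty$ is Kobayashi hyperbolic (convex with no complex line). In that setting the inverse maps into $\Omega_\infty$ still form a normal family by taut-ness of convex hyperbolic domains, and the argument goes through unchanged.
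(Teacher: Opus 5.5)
Your argument is the paper's: normalize into $\mathbb{K}_n$ via Theorem~\ref{com}, extract a limit, apply Proposition~\ref{uc} to get $S_{\Omega_\infty}(0)=T_{\Omega_\infty}(0)=1$, and contradict $\mathbb{B}^n\not\cong\mathbb{D}^n$. Your normal-families argument that these suprema are attained fills in a step the paper simply asserts, but it works only through your tautness fallback: limits in $\mathbb{K}_n$ need not be bounded (e.g.\ $\{\operatorname{Re}z_1<1,\operatorname{Re}z_2<1\}\in\mathbb{K}_2$, or the limit $E$ in Theorem~\ref{ls}), so the boundedness claim in your first bullet is false and should be dropped.
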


\begin{proof}
   For fixed $n\geq2$. Suppose, for a contradiction, that for each $\epsilon_k>0$ there exists a convex domain $\Omega_k\in\mathbb{X}_n$ such that 
   \[
   2-\epsilon_k\leq S_{\Omega_k}(z_k)+T_{\Omega_k}(z_k)
   \]
   holds for some $z_k\in\Omega_k$. We may assume that $\epsilon_k\rightarrow0$. For each $(\Omega_k,z_k)$, by Theorem \ref{com}, there exists $A_k\in\operatorname{Aff}(\mathbb{C}^n)$ such that $A_k(\Omega_k,z_k)\in\mathbb{K}_n$. After passing to a subsequence, there exists $(\Omega_{\infty},z_{\infty})\in\mathbb{X}_n$ such that $A_k(\Omega_k,z_k)$ converges to $(\Omega_{\infty},z_{\infty})$ in $\mathbb{X}_{n,0}$. Proposition \ref{uc} gives 
   \[
   \limsup_{k\rightarrow\infty}(2-\epsilon_k)\leq\limsup_{k\rightarrow\infty} (S_{\Omega_k}(z_k)+T_{\Omega_k}(z_k))\leq S_{\Omega_{\infty}}(z_{\infty})+T_{\Omega_{\infty}}(z_{\infty})\leq2
   \]
   and this implies
   \[
   S_{\Omega_{\infty}}(z_{\infty})=T_{\Omega_{\infty}}(z_{\infty})=1.
   \]
   Hence $\Omega_{\infty}$ is biholomorphic to both $\mathbb{B}^n$ and $\mathbb{D}^n$, and this is a contradiction.
\end{proof}

\noindent$Proof\ of\ Theorem\ \ref{elf}$: Suppose there exists a smooth point $p\in\Omega'$ such that $\partial\Omega'$ is not Levi flat at $p$, then $p$ is a strongly pseudoconvex point. Choosing $p_k\in\Omega'$ that converges to $p$, then 
\[
\lim_{k\rightarrow\infty}S_{\Omega'}(p_k)=1.
\]
Now suppose $\Omega$ and $\Omega'$ are biholomorphic equivalent via a biholomorphic map $f:\Omega\rightarrow\Omega'$, then 
\begin{align}\label{lim}
\lim_{k\rightarrow\infty}S_{\Omega}(f^{-1}(p_k))=1.
\end{align}
After passing to a subsequence, we assume $f^{-1}(p_k)$ converges to a boundary point $q\in\partial\Omega$. By Theorem \ref{main}, we know that 
\[
\lim_{k\rightarrow\infty}T_{\Omega}(f^{-1}(p_k))=1,
\]
this contradicts Lemma \ref{gap} in view of (\ref{lim}).
\smallskip

\noindent$Proof\ of\ Corollary\ \ref{flatness}$: Suppose, for a contradiction, that for any neighborhood $V$ of $p$, there exists $z\in\partial\Omega\cap V$ such that $\partial\Omega$ is not Levi flat at $z$, that is $z$ is a strongly pseudoconvex point. Then we obtain that
\[
\lim_{w\rightarrow z}S_{\Omega}(w)=1.
\]
Therefore, we can select $z_n\in\partial\Omega$ that converges $p$ and each $z_n$ is a strongly pseudoconvex point. Hence, for any $\epsilon>0$, there exists $q_n\in\Omega$ that converges to $p$ 
\[
S_{\Omega}(q_n)>1-\epsilon.
\]
From the assumption of Corollary \ref{flatness}, the following
\[
T_{\Omega}(q_n)>1-\epsilon
\]
holds for large $n$. This contradicts Lemma \ref{gap}, and we complete the proof.$\hfill\square$
\medskip

At the end of this section, we now prove Theorem \ref{fv}. By Remark \ref{us}, we recall the following two lemmas.

\begin{lemmaNoParens}[{\cite[Theorem 2.1]{finite}}]\label{ve}
    If $\Omega$ satisfies the hypotheses of Theorem \ref{fv}, then there exists $C>1,\epsilon>0$ such that

    (1) $g_{B},g_{KE}$ and $k_{\Omega}$ are all $C$-bi-Lipschitz,

    (2) if $\operatorname{Vol}$ denotes the Bergman volume, the K$\ddot{a}$hler-Einstein volume, the Kobayashi-Eisenman volume, then
    \begin{align}
    \frac{1}{C}r^{4}\leq\operatorname{Vol}\left(\left\{z\in\Omega:B_{\Omega}(z_0,z)\leq r\right\}\right)\leq Cr^4   
    \end{align}
    for all $r\in[0,\epsilon]$. 
\end{lemmaNoParens}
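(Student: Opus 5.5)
The plan is to derive both statements from the uniform lower bound on the squeezing function in Remark \ref{us}, following the strategy of \cite[Theorem 2.1]{finite}. The quotient $\Gamma\setminus\Omega$ plays no role in the estimates. Only the convexity and boundedness of $\Omega$ are used. By Remark \ref{us} there is $c>0$ with $S_{\Omega}(z)\geq c$ for all $z\in\Omega$. So for each $z_0\in\Omega$ there is a holomorphic embedding $f_{z_0}:\Omega\to\mathbb{B}^2$ with $f_{z_0}(z_0)=0$ and $B_{c}(0)\subset f_{z_0}(\Omega)\subset\mathbb{B}^2$. All three metrics and volumes are biholomorphic invariants. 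Hence every estimate can be made at the origin of the normalized domain $\Omega_{z_0}:=f_{z_0}(\Omega)$, which is squeezed between two fixed balls independent of $z_0$.

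\textbf{Step 1 (part (1)).} On $\Omega_{z_0}$, monotonicity of the Kobayashi metric under inclusion gives
\[
k_{\mathbb{B}^2}(0;v)\leq k_{\Omega_{z_0}}(0;v)\leq k_{B_c(0)}(0;v),
\]
so $k_{\Omega_{z_0}}(0;\cdot)$ is comparable to $|\cdot|$ with constants depending only on $c$. For the Bergman metric I would use the standard localization: the Bergman kernel and metric at $0$ are bounded above and below by those of $B_c(0)$ and $\mathbb{B}^2$, through the extremal characterizations and the inclusions. For the \K-Einstein metric I would use the Schwarz--Yau lemma in both directions, comparing $g_{KE}$ on $\Omega_{z_0}$ with the complete metrics on the two balls, as in the uniformly squeezing setting of Yeung. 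Transporting back by $f_{z_0}$ shows that all three metrics are mutually $C$-bi-Lipschitz at $z_0$, with $C=C(c)$, uniformly in $z_0$.

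\textbf{Step 2 (part (2)).} Fix $r\leq\epsilon$, with $\epsilon$ small depending only on $c$. The Kobayashi distance on $\Omega_{z_0}$ is sandwiched between the distances of $\mathbb{B}^2$ and of $B_c(0)$. Therefore the metric ball of radius $r$ about $0$ contains a Euclidean ball $B_{r/C}(0)$ and is contained in $B_{Cr}(0)\subset B_{c/2}(0)$. By Step 1, the same inclusions hold, with a different constant, for the Bergman distance $B_{\Omega}$. Next I would show that each of the three volume forms is comparable to Lebesgue measure on $B_{c/2}(0)$, uniformly in $z_0$. To get this, I would apply the pointwise bounds of Step 1 at every point $w\in B_{c/2}(0)$: such a $w$ sits at distance at least $c/2$ from $\partial\Omega_{z_0}$, so the same sandwich argument with balls centred at $w$ applies. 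Integrating then gives
\[
\tfrac1C r^4\leq \operatorname{Vol}\{B_{\Omega}(z_0,\cdot)\leq r\}\leq C r^4,
\]
using that the Euclidean volume of a ball of radius $r$ in $\mathbb{C}^2$ is a constant multiple of $r^4$.

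\textbf{Main obstacle.} The delicate point is the \K-Einstein metric. It is not monotone under inclusion, so its comparability needs the Schwarz--Yau lemma together with a uniform bound on its curvature, or on its volume form, on the normalized domains. My first attempt would be Yau's Schwarz lemma for volume forms applied to $B_c(0)\hookrightarrow\Omega_{z_0}\hookrightarrow\mathbb{B}^2$. This yields two-sided bounds on $\det g_{KE}$ at $0$. Combined with the lower bound $g_{KE}\gtrsim k_{\Omega}$ from the Schwarz lemma for metrics, these determinant bounds force an upper bound on $g_{KE}$ as well.
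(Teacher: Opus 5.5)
Your proposal is correct and follows the paper's route: the paper gives no argument of its own beyond combining the uniform bound $S_\Omega\geq c$ from Remark~\ref{us} with Zimmer's \cite[Theorem 2.1]{finite} for uniformly squeezing domains. What you write is essentially the standard proof of that cited theorem, namely comparisons on the normalized domains $B_c(0)\subset f_{z_0}(\Omega)\subset\mathbb{B}^2$, with the K\"{a}hler--Einstein upper bound coming from Yau's volume-form Schwarz lemma plus the eigenvalue lower bound; only routine details remain, such as using $c/2$ in case the supremum defining $S_\Omega$ is not attained, and noting that the Schwarz lemma bounds $g_{KE}$ below by the Carath\'eodory (or ball) metric, which is comparable to $k_\Omega$ only because of the squeezing.
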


\begin{lemmaNoParens}[{\cite[Theorem 12.2]{fr}}]\label{fix}
    Let $\Omega\subset\mathbb{C}^d$ be a bounded convex domain and $K\leq\operatorname{Aut}(\Omega)$ be a compact group. Then there exists a    point $z\in\Omega$ such that $\gamma(z)=z$ for any $\gamma\in K$.
\end{lemmaNoParens}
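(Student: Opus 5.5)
The plan is to run Cartan's circumcenter argument with the Kobayashi distance $k_\Omega$, using Euclidean convexity of $\Omega$ as a substitute for the missing strict geodesic convexity.

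\textbf{Step 1: a compact $K$-invariant set.} Fix $z_0\in\Omega$. Since $K$ is compact and acts by $k_\Omega$-isometries, the orbit $Kz_0$ is compact, so $R(x):=\max_{\gamma\in K}k_\Omega(x,\gamma z_0)$ is finite and continuous. Because $\Omega$ is bounded and convex, $k_\Omega$ is complete, so $R$ is proper and attains its minimum $R_0$. The minimizing set
\[
C:=\bigcap_{\gamma\in K}\overline{B}^{k}_{R_0}(\gamma z_0)
\]
is then nonempty, compact, contained in $\Omega$, and $K$-invariant.

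\textbf{Step 2: convexity of $C$.} I would invoke the known fact (Lempert's theory, or directly from convexity of $\Omega$ and the triangle inequality along affine segments) that closed Kobayashi balls of a bounded convex domain are Euclidean convex. Hence $C$ is a nonempty compact Euclidean convex set, in particular contractible.

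\textbf{Step 3: obtaining a fixed point.} Each single $\gamma\in K$ maps $C$ continuously into itself, so Brouwer gives it a fixed point. To get a common fixed point I would use that $K$ is a compact Lie group, being a closed subgroup of the Lie group $\operatorname{Aut}(\Omega)$. Iterating Step 1 inside $C$ will not shrink it, since $k_\Omega$ is not uniquely geodesic; instead I would use Smith theory on $C$. A $p$-group acting on a compact acyclic space has a nonempty acyclic fixed set, and a torus is the closure of its $p$-power-order subgroups, so by induction it also has a fixed point in $C$. One then passes to solvable compact groups and finally to general compact $K$, using that $K$ has a torus $T$ whose normalizer meets every component, working with the compact convex set $\operatorname{Fix}(T)\cap C$ or its Kobayashi-circumcenter set.

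\textbf{Main obstacle.} Step 3 is the hard step. Fixed sets of holomorphic automorphisms of convex domains are connected complex submanifolds, but they need not be Euclidean convex, so the induction has to be carried out with acyclicity rather than convexity. If that becomes unwieldy, my fallback would be to replace $k_\Omega$ by a strictly convex $K$-invariant exhaustion, for instance the $K$-average of a suitable strictly convex function. A unique minimizer of such an exhaustion would be $K$-fixed. The difficulty is that averaging over $K$ destroys Euclidean convexity, so this route would also need some geodesic convexity that I cannot yet guarantee.
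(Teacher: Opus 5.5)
\textbf{Where Step 3 breaks.} Steps 1 and 2 are fine. They give you a nonempty compact convex $K$-invariant set $C\subset\Omega$, and Brouwer on $C$ already handles any monothetic $K$: a torus has a topological generator, so Smith theory is not needed there. The gap is Step 3, and it cannot be closed along the lines you propose.

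Any argument that only uses that $C$ is a compact acyclic, disk-like $K$-space must fail. By Oliver's theorems (and Conner--Montgomery for $SO(3)$), there are fixed-point-free actions on disks of finite abelian groups with three noncyclic Sylow subgroups, e.g.\ $(\mathbb{Z}/30)^2$. The same holds for compact Lie groups with nonabelian identity component, such as $SO(3)$. So the induction ``$p$-groups, then tori, then solvable, then general'' via acyclicity is blocked already at the abelian stage.

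Your normalizer step also targets the wrong set. $K$ does not preserve $\operatorname{Fix}(T)\cap C$; only $N_K(T)$ does. A point fixed by $N_K(T)$ (e.g.\ by $O(2)\subset SO(3)$) need not be fixed by $K$.

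As written, then, the proposal proves the lemma only for monothetic $K$. The paper gives no proof of its own. In fact it only uses the lemma for $\overline{\langle\gamma\rangle}$ in Case 2(b) of the proof of Theorem~\ref{fv}, where your Brouwer step suffices.

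\textbf{The missing idea.} You need a strict form of Step 2. It also shows that your remark that re-centering inside $C$ will not shrink it is wrong.

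Take $y\neq x'$ in $\Omega$ and $t\in(0,1)$. The homothety $h(w)=(1-t)y+tw$ maps $\Omega$ into the compact set $(1-t)y+t\overline{\Omega}\subset\Omega$. Hence, for some $\epsilon>0$ independent of $z$, the map $w\mapsto h(w)+\epsilon(h(w)-h(z))$ still maps $\Omega$ into $\Omega$. So $h$ contracts the Kobayashi--Royden metric, and therefore $k_\Omega$, by the factor $(1+\epsilon)^{-1}<1$ (Earle--Hamilton). Since $h(y)=y$, this gives $k_\Omega(y,(1-t)y+tx')<k_\Omega(y,x')$.

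Now let $F\subset\Omega$ be compact and convex with more than one point, and let $x$ lie in its relative interior. For each $y\in F\setminus\{x\}$ you can write $x=(1-t)y+tx'$ with $x'\in F$ and $t\in(0,1)$. Hence $\max_{y\in F}k_\Omega(x,y)<\operatorname{diam}_{k_\Omega}F$: compact convex sets have \emph{normal structure} for $k_\Omega$.

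\textbf{Finishing the argument.} By Zorn's lemma, take a minimal element $F$ of the family of nonempty compact convex $K$-invariant subsets of $C$. Let $r_0$ be the minimum on $F$ of $x\mapsto\max_{y\in F}k_\Omega(x,y)$. The Chebyshev-center set $\{x\in F:\max_{y\in F}k_\Omega(x,y)\le r_0\}=F\cap\bigcap_{y\in F}\overline{B}^{k}_{r_0}(y)$ is nonempty and compact. It is convex by your Step 2, and $K$-invariant because $K$ acts isometrically and preserves $F$. By minimality it equals $F$, so every point of $F$ is diametral. Normal structure then forces $F$ to be a single point, which is fixed by $K$.

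This Brodskii--Milman argument replaces all of Step 3 and needs no Lie or Smith theory.
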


\noindent$Proof\ of\ Theorem\ \ref{fv}$: In this proof, $B_{\Omega}$ is denoted the distance induced by the Bergman metric $g_{B}$.   Suppose $A\subset\Omega$, we let $\widetilde{\operatorname{Vol}}(A)$ denote the volume relative to the associated measure on $\Omega$. If $\pi:\Omega\rightarrow\Gamma\setminus\Omega$ is the natural covering map and $\pi|_{A}$ is injective, then
   \[
   \widetilde{\operatorname{Vol}}(A)=\operatorname{Vol}(\pi(A)).
   \]
   
   Let $p\in\partial\Omega$ be a smooth point, if $p$ is a strongly pseudoconvex point, then $\Omega$ is biholomorphic to $\mathbb{B}^2$ by a similar proof that in \cite{finite}. And this contradicts \cite[Theorem 1.1]{pinchuk}. Hence we know that every smooth boundary point of $\partial\Omega$ is Levi flat.

   Let $p_n\in\Omega$ converge to a singular boundary point $p\in\partial\Omega$. Then Lemma \ref{ve} implies that there exists $C_1>0,\epsilon_1>0$ such that
   \begin{align}
   \widetilde{\operatorname{Vol}}\left(\left\{z\in\Omega:B_{\Omega}(p_n,z)<r\right\}\right)\geq C_1r^4    
   \end{align}
   for all $n\geq 1$ and $r\in[0,\epsilon_1]$.

   For each $n\geq1$, we let 
   \[
   \delta_n:=\min_{\gamma\in\Gamma\setminus\{1\}}B_{\Omega}(p_n,\gamma p_n).
   \]
   Then the quotient map $\pi:\Omega\rightarrow\Gamma\setminus\Omega$ restricts an embedding on
   \[
   B_n:=\{z\in\Omega:B_{\Omega}(z,p_n)<\delta_n/2\},
   \]
  and we obtain that
    \[
    \operatorname{Vol}\left(\pi(B_n)\right)=\widetilde{\operatorname{Vol}}(B_n)\geq C_1\min\{\epsilon_1^4,(\delta/2)^4\}.
    \]
    After passing to a subsequence, we may assume that
    \[
    \lim_{n\rightarrow\infty}\delta_n=\delta\geq0.
    \]

    \textbf{Case 1}: $\delta\neq0$. Let $r=\min\{\epsilon_1,\delta/4\}$ and let 
    \[
    B_n':=\{z\in\Omega:B_{\Omega}(z,p_n)<r\}.
    \]
    Then there exists $N\geq 1$ such that $B'_{n}\subset B_n$ and  
    \[
    \operatorname{Vol}\left(\pi(B'_n)\right)=\widetilde{\operatorname{Vol}}(B'_n)\geq C_1r^4
    \]
    for all $n\geq N$.   

    Suppose $M:=\Gamma\setminus\Omega$ and $d$ is the distance on $M$ making $\pi:(\Omega,B_{\Omega})\rightarrow(M,d)$ is a local isometry.
    \smallskip
    
   \noindent \textbf{Claim}:$\{\pi(p_n):n\geq1\}$ is relatively compact in $M$.
    \smallskip

    \noindent $Proof\ of\ claim$: Suppose not, then there exists $n_i$ such that
    \[
    n_1<n_2<\cdots
    \]
    and
    \[
    \min_{1\leq k<j}d\left(\pi(p_{n_k}),\pi(p_{n_j})\right)>2r
    \]
    for each $j\geq2$.
    Thus the sets
    \[
    \pi(B'_{n_1}),\pi(B'_{n_2}),\cdots
    \]
    are pairwise disjoint since
    \[
    \pi(B'_{n_j})\subset\{z\in M:d(z,\pi(p_{n_j}))<r\}.
    \]
    Hence we obtain
    \[
    \operatorname{Vol}(M)\geq\sum_{j\geq1}^{\infty}\operatorname{Vol}(\pi(B_{n_j}'))\geq\sum_{j\geq1}^{\infty}C_1r^4=\infty ,   
    \]
    which is a contradiction. 
\smallskip

    From the claim, we know that for each $n\geq1$, there exists $\gamma_n\in\Gamma$ such that the set $\{\gamma_np_n:n\geq1\}$ is relatively compact in $\Omega$. After passing to a subsequence, we may assume $\gamma_np_n\rightarrow q\in\Omega$, and $\gamma_n^{-1}$ converges to a holomorphic map $g:\Omega\rightarrow\overline{\Omega}$. Moreover, 
    \[
    p=\lim_{n\rightarrow\infty}p_n=\lim_{n\rightarrow\infty}\gamma_n^{-1}(\gamma_np_n)=g(q)=\lim_{n\rightarrow\infty}\gamma_{n}^{-1}q.
    \]
    This means that $T_{\Omega}(q)=1$ from Theorem \ref{main} and $\Omega$ is biholomorphic to $\mathbb{D}^2$.
\medskip

    \noindent\textbf{Case 2}: $\delta=0$. For each $n$ we select $\gamma_n\in\Gamma$ such that
    \[
    B_{\Omega}(p_n,\gamma_np_n)=\delta_n.
    \]
\smallskip

    \noindent{}\textbf{Case 2(a)}: If $\{\gamma_1,\gamma_2,\cdots\}$ is infinite. After passing to a subsequence, we may assume that $\gamma_n\rightarrow\infty$ since $\Gamma$ is discrete. For fixed $z\in\Omega$, since $\Gamma$ acts properly on $\Omega$, then we may assume $\gamma_n^{-1}z\rightarrow\xi\in\partial\Omega$. Hence Theorem \ref{main} implies that $T_{\Omega}(z)=1$ and $\Omega$ is biholomorphic to $\mathbb{D}^2$. 
\smallskip

    \noindent\textbf{Case 2(b)}: If $\{\gamma_1,\gamma_2,\cdots\}$ is finite, then we may assume $\gamma_n=\gamma$ for large $n$.
\smallskip

    For fixed $z\in\Omega$, if $\{\gamma^n(z):n\geq1\}$ is relatively compact in $\Omega$, then $\gamma$ has a fixed point in $\Omega$ since Lemma \ref{fix}. But $\Gamma$ acts freely on $\Omega$, hence $\{\gamma^n(z):n\geq1\}$ is unbounded in $\Omega$. After passing to a subsequence, we may assume $\gamma^n(z)\rightarrow z_0\in\partial\Omega$. By a similar argument in case 2(a), we know that $\Omega$ is biholomorphic to $\mathbb{D}^2$. This completes the proof.
$\hfill\square$

\section{Biholomorphism-type of $\mathcal{T}_g$}\label{sec5}
 In this section, we prove Theorem \ref{te} and Theorem \ref{levi}. Our proofs are mainly based on the techniques in \cite{pinchuk,tei}. Before proving the theorems, we recall some properties of $\mathcal{T}_g$.

 Let $S$ be a compact surface with genus $g\geq2$, and $\mathcal{T}_g$ be the \T\ space associated $S$. The following facts are well known:

 \begin{enumerate}
     \item $\mathcal{T}_g$ can not be biholomorphic to a bounded convex domain in $\mathbb{C}^{3g-3}$.

     \item The automorphism group of $\mathcal{T}_g$ is discrete.
 \end{enumerate}

\begin{defn}
    For any domain $\Omega\subset\mathbb{R}^n$, a boundary point $p\in\partial\Omega$ is Alexandorff smooth if
    \begin{enumerate}
        \item $\Omega$ is locally convex at $p$,
        
        \item there exists $r>0$ such that $\Omega\cap B(p,r)$ is convex and $\partial\Omega\cap B(p,r)$ is the graph of a convex function $\psi:U\cap V\rightarrow \mathbb{R}_+$ which has a second Taylor expansion at $p$. That is, if we assume that $p=0$ and $V=\{x_n=0\}$ is the supporting hyperplane for $\Omega\cap B(p,r)$, then we have 
        \[
        \psi(x_1,\cdots,x_n)=\frac{1}{2}\sum_{i,j=1}^nH_{i,j}x_ix_j+o(|x|^2)
        \]
        for some $n\times n$ symmetric matrix $H$.
    \end{enumerate}
\end{defn}

For a domain $\Omega\subset\mathbb{R}^n$, if $\Omega$ is locally convex at $p\in\partial\Omega$, and $\partial\Omega$ is $C^2$-smooth near $p$, then $p$ is an Alexandorff smooth point.

\begin{lemma}
    Suppose $\Omega\subset\mathbb{R}^n$ is a domain and $p\in\partial\Omega$ is an Alexandorff smooth point. Then there exists a round sphere $S$ contained in $\overline{\Omega}$ and $S\cap\overline{\Omega}=\{p\}$.
\end{lemma}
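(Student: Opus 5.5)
The plan is to build an interior tangent ball at $p$ directly from the second-order expansion in the definition of Alexandrov smoothness. Read literally, the conclusion $S\cap\overline{\Omega}=\{p\}$ is incompatible with $S\subset\overline{\Omega}$ unless $S=\{p\}$. I will therefore prove the intended statement: there is a round sphere $S=\partial B$, with $\overline{B}\subset\Omega\cup\{p\}$, such that $S\subset\overline{\Omega}$ and $S\cap\partial\Omega=\{p\}$.

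First I normalize coordinates. Put $p=0$ and take $V=\{x_n=0\}$ as the supporting hyperplane. Then near $0$,
\[
\Omega\cap B(0,r)=\{x\in B(0,r): x_n>\psi(x')\},\qquad x'=(x_1,\dots,x_{n-1}),
\]
with $\psi\ge 0$ convex, $\psi(0)=0$ and $\psi(x')=\tfrac12 x'^{T}Hx'+o(|x'|^2)$. Let $\lambda\ge0$ be the largest eigenvalue of $H$ (which is positive semidefinite, by convexity of $\psi$). By the $o(|x'|^2)$ term, there is $\delta>0$ such that
\[
\psi(x')\le \tfrac12(\lambda+1)|x'|^2 \quad\text{for } |x'|<\delta .
\]

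Next I choose the ball. Fix $R>0$ with $1/R>\lambda+1$, $R<\delta$ and $2R<r$, and let $B$ be the open ball of radius $R$ centred at $(0,\dots,0,R)$. A point of $\overline{B}$ satisfies $|x'|\le R<\delta$ and
\[
x_n\ \ge\ R-\sqrt{R^2-|x'|^2}\ \ge\ \frac{|x'|^2}{2R}.
\]
When $x'\neq0$ this gives $x_n>\tfrac12(\lambda+1)|x'|^2\ge\psi(x')$. When $x'=0$ we have $x_n\ge0=\psi(0)$, with equality only at $x=0$. Since also $\overline{B}\subset \overline{B(0,2R)}\subset B(0,r)$, every point of $\overline{B}$ other than $p$ lies in $\{x_n>\psi\}\cap B(0,r)=\Omega\cap B(0,r)$. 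Hence $S=\partial B\subset\overline{\Omega}$ and $S\cap\partial\Omega=\{p\}$.

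The only step needing care is the strict inequality for small $x'\neq0$ where the quadratic form degenerates. Enlarging $\lambda$ by $1$ absorbs the $o(|x'|^2)$ error and leaves a strict quadratic margin, and choosing $R<\delta$ keeps the whole ball inside the region where the Taylor bound holds. Convexity of $\Omega\cap B(p,r)$ is used only to guarantee the graph description and $H\ge0$.
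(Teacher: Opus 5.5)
Your proof is correct. The paper states this lemma without proof, so there is no argument of theirs to compare against; your interior tangent ball construction is the standard argument and supplies what the paper omits.

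You are right that the statement as printed is self-contradictory: $S\subset\overline{\Omega}$ together with $S\cap\overline{\Omega}=\{p\}$ forces $S=\{p\}$. The intended conclusion must be $S\cap\partial\Omega=\{p\}$. An exterior tangent sphere cannot be what is meant. Alexandrov smoothness allows $H=0$, for instance when $\Omega$ is locally a half-space, and then no ball containing $\Omega$ near $p$ meets $\partial\Omega$ only at $p$.

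Your computation is sound. The bound $R-\sqrt{R^2-|x'|^2}\ge |x'|^2/(2R)$ holds, with equality only at $x'=0$. Adding $1$ to $\lambda$ absorbs the $o(|x'|^2)$ term. Taking $R<\delta$ and $2R<r$ keeps $\overline{B}$ inside the region where both the Taylor bound and the graph description hold.

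The only interpretive steps concern the loosely written definition in the paper:
\begin{itemize}
\item $H$ should be an $(n-1)\times(n-1)$ form in $x'$.
\item $\Omega$ should be locally the supergraph $\{x_n>\psi(x')\}$ over a disc $\{|x'|<\delta\}\subset U\cap V$.
\end{itemize}
Both are clearly the intended meaning. The supergraph orientation is forced by $\psi\ge 0$ and $V$ being supporting, after flipping $x_n$ if necessary. The disc condition can be arranged by shrinking $r$ and $\delta$. So nothing is missing.
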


\begin{thmNoParens}[{\cite[Propostion A.6]{tei}}]\label{one}
    Let $\Omega\subset\mathbb{C}^n$ be a bounded domain and $p\in\partial\Omega$ be an Alexandorff smooth point. If $\Omega$ is locally convex at $p$, and there exists $f_j\in\operatorname{Aut}(\Omega)$ such that $f_j(q)\rightarrow p$ with some $q\in\Omega$, then $\operatorname{Aut}(\Omega)$ contains a non-compact one-parameter subgroup.
\end{thmNoParens}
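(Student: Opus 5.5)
The plan is to run a Pinchuk--Frankel type scaling argument at $p$ and then pass from a compact family of rescaled automorphisms to a one-parameter group. The key inputs are the orbit $f_j(q)\to p$, local convexity at $p$, and the second-order Taylor expansion that Alexandrov smoothness provides.

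\emph{Step 1: localize.} By the lemma just stated, there is a round ball touching $\partial\Omega$ only at $p$ from outside, i.e.\ $\overline{\Omega}\subset$ a ball $B$ with $\partial B\cap\overline{\Omega}=\{p\}$ locally. This gives a local peak function at $p$, namely a suitable affine-linear function composed with a Möbius map. Using it, the standard localization of the Kobayashi metric shows that the orbit $f_j(K)$ of any compact $K\ni q$ converges to $p$ uniformly. Hence for large $j$ the maps $f_j$ take values in the convex piece $\Omega\cap B(p,r)$. From then on everything happens in the locally convex region.

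\emph{Step 2: rescale.} Place $p=0$ with supporting hyperplane $\{\operatorname{Re} z_1=0\}$, so that locally $\Omega=\{\operatorname{Re} z_1>\psi\}$ with $\psi(x)=\tfrac12 x^{T}Hx+o(|x|^2)$. Apply the affine normalization $A_j$ of Theorem~\ref{com} to $(\Omega\cap B(p,r),f_j(q))$, in the style of the proof of Theorem~\ref{ls}. By compactness of $\mathbb{K}_n$, a subsequence $A_j(\Omega\cap B(p,r))$ converges in $\mathbb{X}_n$ to a limit $\Omega_\infty$. The second-order expansion lets one identify $\Omega_\infty$ as a convex ``parabolic'' model of the form $\{\operatorname{Re} w_1>Q(w')\}$ with $Q$ a convex quadratic form in the real coordinates. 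This model is invariant under the real translations $w_1\mapsto w_1+it$, and under the dilations $w\mapsto(\lambda^2 w_1,\lambda w')$ restricted to the appropriate degenerate directions. Because convex domains are taut and the limit contains no complex line, the maps $A_j\circ f_j$ converge (after a subsequence) to a biholomorphism $\Phi:\Omega\to\Omega_\infty$, as in Frankel/Kim--Krantz. This uses that the limit map is nondegenerate, which follows from $A_jf_j(q)$ staying in a compact set of $\Omega_\infty$ together with a Kobayashi-distance estimate.

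\emph{Step 3: conclude.} The model $\Omega_\infty$ carries the non-compact one-parameter group $t\mapsto(w_1+it,w')$, which acts by affine maps. Conjugating it by $\Phi$ gives $\Phi^{-1}\circ g_t\circ\Phi\in\operatorname{Aut}(\Omega)$. This is a non-compact one-parameter subgroup, since $\Phi$ is a homeomorphism and $g_t(w)\to\infty$ in $\Omega_\infty$.

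\emph{Main obstacle.} The hardest step is Step 2: showing the limit domain is exactly invariant under the imaginary translations in $w_1$. The issue is that $\psi$ is only twice differentiable at the single point $p$, not $C^2$ nearby. To handle this, I would first try choosing the scaling so that the $w_1$-direction is normalized by $\operatorname{dist}(f_j(q),\partial\Omega)$. The convexity of $\psi$ together with its expansion at $0$ then controls the rescaled graphs uniformly on compact sets. In particular, $\psi(\delta^{1/2}x)/\delta\to\tfrac12 x^{T}Hx$ (with linear directions allowed where $H$ degenerates), and any $\operatorname{Im} z_1$-dependence disappears in the limit. Convexity supplies the uniform control needed in place of smoothness.
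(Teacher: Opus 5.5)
The paper gives no argument here (it quotes \cite[Proposition A.6]{tei}), so your proposal has to stand on its own. It has a genuine gap at Step 1, and Steps 2 and 3 rest on that step. You read the preceding lemma as giving an \emph{exterior} tangent ball ($\overline{\Omega}\subset B$ near $p$, $\partial B\cap\overline{\Omega}=\{p\}$), and hence a local peak function. The lemma instead places the sphere inside $\overline{\Omega}$. An interior tangent ball is what the upper bound $\psi(x)\le C|x|^2$ from the second-order expansion gives. An exterior ball meeting $\overline{\Omega}$ only at $p$ would need $\psi(x)\ge c|x|^2$, i.e.\ $H$ positive definite, and Alexandrov smoothness does not provide that; without it, $p$ need not be a local peak point at all.

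Concretely, let $\Omega=\mathbb{D}^2$, $p=(1,0)$, and $f_j(z,w)=\bigl(\frac{z+a_j}{1+a_jz},w\bigr)$ with $a_j\uparrow 1$. All hypotheses hold: $\partial\Omega$ is real-analytic and convex near $p$ with $H$ of rank one, and $f_j(0,0)\to p$. Yet $\{1\}\times\mathbb{D}\subset\partial\Omega$, so the maximum principle on this disc rules out any local peak function at $p$. Moreover $f_j(0,\tfrac12)\to(1,\tfrac12)\neq p$, so the claimed uniform convergence $f_j(K)\to p$ is false.

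This is not cosmetic. Only $\Omega\cap B(p,r)$ is convex, so Frankel's normal-family argument in Step 2 needs $f_j(K)\subset\Omega\cap B(p,r)$ for every compact $K$ and large $j$. Only then are the $A_j\circ f_j$ maps into the rescaled convex pieces. Nothing in your hypotheses supplies this, so $\Phi$ is not obtained.

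The same phenomenon undermines your identification of the limit. When a complex disc through $p$ lies in $\partial\Omega$, the tangential normalizing scales from Theorem~\ref{com} do not tend to $0$; in the example, the distance to the boundary in the $w$-direction stays near $r$. So the limit of $A_j(\Omega\cap B(p,r))$ is governed by the boundary at unit scale, including the artificial piece $\partial B(p,r)$, not by the Taylor expansion at $p$. Also, the model $\{\operatorname{Re}w_1>Q(w')\}$ with degenerate $Q$ may contain complex lines: for the bidisk your $(\delta,\sqrt{\delta})$ scaling gives $Q\equiv 0$, a half-space. Such a limit cannot be biholomorphic to the bounded $\Omega$.

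Your scheme does work once localization is available, for example if $p$ is additionally a local peak point (as in the paper's Theorem~\ref{peak}) or if $H$ is positive definite. Then $f_j(K)\to p$. Differentiability of the convex graph function at $p$ then forces its subgradients near $p$ to tend to $0$, and that removes the $\operatorname{Im}w_1$-dependence you flagged as the main obstacle. Under the hypotheses as stated, you need a different localization mechanism.
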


Now we prove Theorem \ref{te}, we restate it as follows for convenience.
\begin{thm}
    Any Teichm\"{u}ller space $\mathcal{T}_g$ with $g\geq2$ can not be biholomorphic to a bounded generic domain with piecewise $C^2$-smooth boundary.
\end{thm}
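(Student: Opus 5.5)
We argue by contradiction. Suppose $\Phi:\mathcal{T}_g\rightarrow\Omega$ is a biholomorphism onto a bounded generic domain $\Omega=\{\rho_1<0,\cdots,\rho_k<0\}\subset\mathbb{C}^{3g-3}$ with piecewise $C^2$-smooth boundary. The plan is to exploit the two features of $\mathcal{T}_g$ recalled at the start of this section, namely that $\operatorname{Aut}(\mathcal{T}_g)$ is discrete (it is the extended mapping class group) and that $\mathcal{T}_g$ is not biholomorphic to a bounded convex domain. In addition we use the classical fact that $\mathcal{T}_g$ covers the moduli space $\mathcal{M}_g=\operatorname{Mod}_g\setminus\mathcal{T}_g$. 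This quotient has finite Kobayashi (and Bergman, K\"{a}hler--Einstein) volume. After passing to a torsion-free finite index subgroup $\Gamma$, we obtain a discrete group acting freely with $\Gamma\setminus\Omega$ of finite volume. The key consequence is that $\operatorname{Aut}(\Omega)$ is non-compact. More precisely, for a generic point $q\in\Omega$ and a sequence $p_n\rightarrow p\in\partial\Omega$, the thick/thin analysis of the proof of Theorem \ref{fv} (Case 1, 2(a), 2(b)) produces $\gamma_n\in\Gamma$ with $\gamma_n^{-1}(q)\rightarrow$ a boundary point.

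The key step is to steer the accumulation point to a \emph{strongly pseudoconvex smooth} boundary point, or at least to a smooth point. First we show that $\partial\Omega$ has a point $p$ of index $1$ at which $\partial\Omega$ is strongly pseudoconvex. Since $\Omega$ is bounded, we take the farthest point from the origin: $\partial\Omega$ is contained in a sphere tangent there. Genericity ensures that at such an extremal point only one $\rho_i$ can vanish. If several vanished, the linearly independent gradients would make the boundary a corner, and a corner cannot touch a circumscribed ball from inside with the domain on one side unless the gradients are parallel. Hence $p$ is a $C^2$ smooth point lying inside a tangent ball, and so it is strongly pseudoconvex. We then use the finite-volume argument localized at $p$: choose $p_n\rightarrow p$ along the normal. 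By Zimmer's argument (Lemma \ref{ve}, as used in Theorem \ref{fv}), either $\{\pi(p_n)\}$ is relatively compact in the quotient, giving $\gamma_n$ with $\gamma_n^{-1}(q)\rightarrow p$, or the injectivity radius tends to $0$. The second alternative yields an orbit accumulating at the boundary. In the first case the Wong--Rosay theorem at the strongly pseudoconvex point $p$ gives $\Omega\cong\mathbb{B}^{3g-3}$, which is convex and therefore contradicts fact (1).

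The main obstacle is the second alternative: the injectivity radius goes to zero along $p_n$. In the moduli-space picture this is exactly the thin part, where Dehn twists $\gamma$ almost fix $p_n$. Here we want to use that a parabolic-type element $\gamma$ with $B_\Omega(p_n,\gamma p_n)\rightarrow0$ near a strongly pseudoconvex point must have its orbits $\gamma^m(z)$ converge to $p$ itself. The first tool I would try is the Kobayashi-metric estimates near strongly pseudoconvex points: the ball rescaling shows that small displacement at $p_n$ forces $\gamma$ to fix $p$, and then $\gamma^m(z)\rightarrow p$. With this, Rosay's theorem again applies. Alternatively, and perhaps more simply, once some orbit $f_j(q)\rightarrow p$ with $p$ strongly pseudoconvex (hence Alexandrov smooth and locally convex), Theorem \ref{one} gives a non-compact one-parameter subgroup in $\operatorname{Aut}(\Omega)\cong\operatorname{Aut}(\mathcal{T}_g)$. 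This contradicts discreteness (fact (2)), and it is the route I would ultimately write down, since it avoids Rosay entirely and only needs the orbit accumulation at a locally convex $C^2$ point.
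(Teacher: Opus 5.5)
There is a genuine gap at your first key step. The claim that the point of $\partial\Omega$ farthest from the origin must have index $1$ is false. The bidisk $\mathbb{D}^2=\{|z_1|^2-1<0,\ |z_2|^2-1<0\}$ is a bounded generic domain with piecewise $C^2$-smooth boundary. Its farthest points from the origin form the torus $\{|z_1|=|z_2|=1\}$, where both defining functions vanish with $d\rho_1\wedge d\rho_2\neq0$. (A square already touches its circumscribed circle exactly at its corners: a convex corner is what sticks out farthest, not something excluded.)

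The conclusion you want also fails in this class. $\mathbb{D}^n$ and $\mathbb{D}\times\mathbb{B}^{n-1}$ have no strongly pseudoconvex boundary points at all, since at every smooth boundary point some complex tangent direction has vanishing Levi form. So neither Wong--Rosay nor Theorem \ref{one} is available in general: the extremal point can be a corner, and a corner is not Alexandrov smooth.

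The missing idea is Pinchuk's local analysis at such points. By Lemma 1.4 of \cite{pinchuk} there is a boundary point $p$ of index $r$ and a local biholomorphism $g_p$ with $g_p(U\cap\Omega)\subset\mathbb{D}^{r-1}\times\mathbb{B}^{n-r+1}$ and $g_p(p)=(1,\dots,1,0,\dots,0)$. Pinchuk's rescaling lemma \cite{pinchuk2} at such a point turns an automorphism orbit accumulating at $p$ into a biholomorphism of $\Omega$ onto $W=\{\operatorname{Re}w_i+H_i(w')<0\}$, whose automorphism group is not discrete. That is the contradiction the paper uses, rather than convexity of a ball.

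Your handling of orbit accumulation also does not close. Taking $p_n\to p$ along the normal gives no control over whether $\pi(p_n)$ enters the thin part of $\mathcal{M}_g$. In that case the argument of Theorem \ref{fv} (Case 2(a)) only gives $\gamma_n^{-1}(z)\to\xi$ for some boundary point $\xi$ that you cannot identify with $p$. This was harmless in Theorem \ref{fv} only because Theorem \ref{main} holds at every boundary point.

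Your assertion that small displacement at $p_n$ forces $\gamma$ to fix $p$, with $\gamma^m(z)\to p$, is unsupported. The $\gamma_n$ change with $n$, and you only know $\gamma_n(p_n)\to p$, not $\gamma_n(q)\to p$ for a fixed $q$.

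The paper avoids volume and injectivity-radius considerations entirely. In the local model, the Euclidean distance to the real hyperplane $\{\operatorname{Re}z_1+\cdots+\operatorname{Re}z_r=r\}$ is affine in $\operatorname{Re}(z_1+\cdots+z_r)$ on the domain side, hence pluriharmonic. So $\psi=\varphi\circ g_p$ is pluriharmonic, and the connected components $U_{\leq\delta}$ of its sublevel sets shrink to $p$. The Gupta--Seshadri argument \cite{tei} then finds points of $U_{\leq\delta_n}$ projecting into a fixed compact subset of $\mathcal{M}_g$. This yields $f_j(q)\to p$ at the prescribed point $p$, which is exactly what the rescaling step needs.
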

\begin{proof}

 Let $\mathcal{T}_{g}$ be a  Teichm\"{u}ller space corresponding to a compact surface $S$ with genus $g\geq2$, and $\Omega$ be a bounded generic domain with piecewise $C^2$-smooth boundary that is biholomorphic to $\mathcal{T}_{g}$. 
    
    By Lemma 1.4 in \cite{pinchuk},  there exists a boundary point $p\in\partial\Omega$ with index $r\geq 2$ and a small neighborhood $U$ of $p$, such that there exists a biholomorphic mapping
    \[
    g_p:U\rightarrow U_p\subset\mathbb{C}^n
    \]
    such that 
    \[
    g_p(U\cap\Omega)\subset\mathbb{D}^{r-1}\times\mathbb{B}^{n-r+1}
    \]
    and
    \[
    g_p(p)=(\underbrace{1,1,\cdots,1}_{r\ times},0,\cdots,0).
    \]
    Let $V_p$ be the hypersurface
    \[
  f(z_1,\cdots,z_n)= \operatorname{Re}z_1+\cdots\operatorname{Re}z_r-r=0,
    \]
    then $g_p(p)\in V_p$ and $f(w)<0$ for all $w\in g_p(U\cap\Omega)$. Now let $\varphi(z):g_p(U\cap\Omega)\rightarrow\mathbb{R}$ denote the Euclidean distance between $z$ and $V_p$. Then $\varphi:g_p(U\cap\Omega)\rightarrow\mathbb{R}$ is a pluriharmonic function, and hence $\psi=\varphi\circ g_p$ is pluriharmonic from $U$ to $\mathbb{R}$. 

    For any positive constant $\delta>0$, suppose that $U_{\leq \delta}$ is the connected component of $\psi^{-1}\left((-\infty,\delta])\cap\overline{\Omega\cap U}\right)$ that contains the point $p$, then $\overline{U}_{\leq\delta}$ converges to $\{p\}$ in Hausdorff topology as $\delta\rightarrow0$. To see this, it is enough to prove that for given $\epsilon>0$, there exists $\delta>0$ such that $U_{\leq\delta}\subset B_{\epsilon}(p)$. Suppose there exists $\epsilon_0>0$, and a sequence $p_k\in U_{\leq\delta_k}\setminus B_{\epsilon}(p)$ as $\delta_k\rightarrow0$. After passing to a subsequence, we may assume that $p_k$ converges to some $q\in  U\cap\partial\Omega$. Since
\[
0\leq\psi(q)=\lim_{k\rightarrow\infty}\psi(p_k)\leq\lim_{k\rightarrow\infty}\delta_k=0,
\]
then $q=p$. This contradicts our assumption.
    
    Therefore, by an argument similar to the proof of \cite[Proposition 3.1]{tei}, we can select a point $q\in\Omega$ and $f_j\in\operatorname{Aut}(\Omega)$ such that $f_j(q)\rightarrow p$. Since such $p$ satisfies the condition in the proof of main lemma in \cite[page 850]{pinchuk2}, then we conclude that $\Omega$ is biholomorphic to the domain
    \[
    W:=\left\{w\in\mathbb{C}^n:\operatorname{Re}w_i+H_i(w')<0,\ i=1,\cdots,k\right\},
    \]
   where $w'=(w_{k+1},\cdots,w_n)$ and $H_i$ are nonnegative quadratic Hermitian forms such that their sum is strictly positive. And this contradicts the fact that the automorphism group of $\mathcal{T}_{g}$ is discrete.
\end{proof}

\smallskip

The above proof also yields the following result, which together with Theorem~\ref{one} implies Theorem~\ref{peak}.
\begin{proposition}

    Let $\Omega\subset\mathbb{C}^n$ be a bounded domain, and $p\in\partial\Omega$ be a local peak point. If $\Omega$ is biholomorphic to some $\mathcal{T}_{g}$, then $p$ is an accumulation point of an orbit of $\operatorname{Aut}(\Omega)$. 
\end{proposition}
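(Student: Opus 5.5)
The plan is to imitate the orbit-accumulation step in the proof of Theorem \ref{te}, replacing the pluriharmonic exhaustion built from Lemma 1.4 of \cite{pinchuk} by the peak function itself. Let $f$ be the local peak function at $p$, defined on a neighborhood $U$ of $p$, so that $|f|<1$ on $U\cap\overline{\Omega}\setminus\{p\}$ and $f(p)=1$. After shrinking $U$ we may take $f$ close to $1$ on $U$ and set $\psi:=-\log|f|$. This $\psi$ is pluriharmonic near $p$ (take a branch of $\log f$), nonnegative on $U\cap\overline\Omega$, and vanishes on $U\cap\overline\Omega$ only at $p$.

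\emph{Step 1 (localization).} For $\delta>0$ let $U_{\le\delta}$ be the connected component of $\{\psi\le\delta\}\cap\overline{\Omega\cap U}$ containing $p$. The compactness argument used in the proof of Theorem \ref{te} applies verbatim: any limit $q$ of points $p_k\in U_{\le\delta_k}$ with $\delta_k\to0$ satisfies $\psi(q)=0$, hence $q=p$. Therefore $U_{\le\delta}$ shrinks to $\{p\}$ in the Hausdorff topology. In particular, for small $\delta$ the sets $\{z\in\Omega\cap U:\psi(z)<\delta\}$ near $p$ form a neighborhood basis of $p$ relative to $\Omega$, and their closures stay away from $\partial U$.

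\emph{Step 2 (producing an orbit).} Here I follow the argument of \cite[Proposition 3.1]{tei}. The key input is that $\mathcal{T}_g$ covers the moduli space $\mathcal{M}_g=\mathcal{T}_g/\mathrm{Mod}_g$. Since $\mathcal{M}_g$ has finite Kobayashi (equivalently Teichm\"uller) volume, or alternatively since a thick part of $\mathcal{M}_g$ is compact, one can choose a compact $K\subset\Omega$ such that every point of $\Omega$ lies within a uniformly bounded Kobayashi distance $R$ of some $\mathrm{Aut}(\Omega)$-translate of a point in the thick part, or of $K$. I would transport this statement through the biholomorphism $\Omega\cong\mathcal{T}_g$. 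Then take $z_k\in\Omega$ with $z_k\to p$. For each $k$ pick $\gamma_k\in\mathrm{Aut}(\Omega)$ and a point $w_k$, near a fixed compact set, with $d^K_\Omega(\gamma_k w_k,z_k)\le R$.

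The localization from Step 1 should then force $\gamma_k w_k\to p$. The point is that $p$ is a local peak point, so by standard localization of the Kobayashi metric near local peak points (Sibony/Royden type estimates), bounded Kobayashi balls around $z_k\to p$ collapse to $p$. Passing to a subsequence with $w_k\to w\in\Omega$, the distance-decreasing property gives $\gamma_k(w)\to p$, so $p$ is an accumulation point of the orbit of $w$.

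\emph{Main obstacle.} The hardest step is the thick–thin part of Step 2. A sequence $z_k\to p$ may project into the thin part of $\mathcal{M}_g$, where no uniform bound $R$ exists. The first thing I would try is the finite-volume trick from the proof of Theorem \ref{fv}. If the projections of $z_k$ escape every compact set, then injectivity radii tend to $0$, which produces automorphisms $\gamma_k$ moving $z_k$ by a Kobayashi distance $\delta_k\to0$. One then argues as in Cases 2(a) and 2(b), using that $\mathrm{Aut}(\Omega)$ is discrete and acts properly, to obtain an orbit accumulating at a boundary point. The Kobayashi localization at the peak point $p$ should force that boundary point to be $p$ itself. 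If this fails, the fallback is to use directly the Teichm\"uller-theoretic statement in \cite[Proposition 3.1]{tei}, namely that boundary points admitting a pluriharmonic peak-type exhaustion as in Step 1 are orbit accumulation points, with $\psi=-\log|f|$ playing exactly the role of $\varphi\circ g_p$ in the proof of Theorem \ref{te}.
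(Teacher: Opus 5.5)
Your Step 1 is the paper's proof. The paper uses $h=1-\operatorname{Re} g$ instead of $-\log|f|$, shows that the components $U_{\le\delta}$ shrink to $\{p\}$, and then simply invokes ``a similar argument'' from \cite{tei}, so your stated fallback is exactly the paper's route.

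\textbf{Step 2 has a genuine gap.} The claim that every point of $\Omega\cong\mathcal{T}_g$ lies within a uniform Kobayashi distance $R$ of an $\operatorname{Aut}(\Omega)$-translate of a fixed compact set is false. $\mathcal{M}_g$ is not compact. If some curve has hyperbolic length $\ell$ at a point, Wolpert's inequality $\ell_\gamma(Y)\le e^{2d(X,Y)}\ell_\gamma(X)$ shows that its Teichm\"uller (equivalently Kobayashi) distance to the $\epsilon$-thick part is at least $\tfrac12\log(\epsilon/\ell)$, which tends to infinity as $\ell\to 0$. So for an arbitrary sequence $z_k\to p$ no such $R$ exists.

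The finite-volume repair does not work either. In Cases 2(a) and 2(b) of the proof of Theorem \ref{fv}, one only learns that the orbit of a fixed $z$ accumulates at \emph{some} $\xi\in\partial\Omega$. That sufficed there because Theorem \ref{main} holds at every boundary point. Here you need $\xi=p$, and the peak-point localization of the Kobayashi metric says nothing, because $\gamma_n^{-1}z$ (or $\gamma^n z$) is not within bounded Kobayashi distance of $p_n$. In addition, $\mathrm{Mod}_g$ does not act freely, though that is minor.

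The missing idea is what the pluriharmonicity of $\psi$ is for. In your main line it serves only for localization, which should be a warning sign. As the paper indicates in the proof of Theorem \ref{levi}, the argument of \cite{tei} uses the pluriharmonic function to show that each sublevel component $\{\psi<\delta\}\cap\Omega$ near $p$ contains a point $z_\delta$ lying over a fixed compact set $K\subset\mathcal{M}_g$. So one does not take an arbitrary sequence tending to $p$; one chooses it inside the sublevel sets so that it stays in the thick part. Once such points exist, the conclusion is elementary:
\begin{enumerate}
\item Write $z_\delta=\gamma_\delta(k_\delta)$ with $k_\delta$ in a fixed compact set $\widetilde K\subset\Omega$, and pass to a subsequence with $k_\delta\to k$.
\item Then $d_\Omega(\gamma_\delta(k),z_\delta)=d_\Omega(k,k_\delta)\to 0$.
\item Since $\Omega$ is bounded, $d_\Omega(x,y)\ge c\,|x-y|$ for some $c>0$, so $\gamma_\delta(k)\to p$ by Step 1.
\end{enumerate}
No Sibony--Royden localization is needed. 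Without this thick-part step, either proved or quoted from \cite{tei}, your Step 2 does not establish the proposition.
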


\begin{proof}
    Suppose $p$ is a local peak point, then there exists a neighborhood $U$ of $p$ and a holomorphic map $g:U\cap\overline{\Omega}\rightarrow\mathbb{D}$ such that $|g(z)|<1$ whenever $z\in U\cap\overline{\Omega}\setminus\{p\}$ and $g(p)=1$. Now if we let $f$ be the real part of $g$ that is $f=\operatorname{Re}g$, then $f$ is pluriharmonic on $U\cap\Omega$ and continuous on $U\cap\partial\Omega$. If we denote $U_{\leq\delta}$ as the connected component of $\delta$-level set of $h^{-1}((-\infty,\delta])$ containing $p$ with $h=1-f$, then $\overline{U}_{\leq\delta}$ converges to $\{p\}$ in the Hausdorff topology. Thus we know that $p$ is an accumulation point of an orbit of $\operatorname{Aut}(\Omega)$ if $\Omega$ is biholomorphic to some $\mathcal{T}_{g}$ after a similar argument of proof in \cite{tei}.
\end{proof}

\noindent$Proof\ of\ Theorem\ \ref{levi}$:  Let $S$ be a compact surface with genus $g\geq2$, and $\mathcal{T}_g$ be the \T\ space associated $S$.  Suppose $\Omega$ is locally convex near $p$ and $\partial\Omega$ is $C^2$-smooth near $p$ that is biholomorphic to $\mathcal{T}_g$. Then there exists a neighborhood $U$ of $p$ such that $U\cap\Omega$ is convex. Let $V_p$ be the support hypersurface at $p$, and let $h:U\rightarrow\mathbb{R}$ be the Euclidean distance to $V_p$. Then $h$ is pluriharmonic. For any $\delta>0$, denote $U_{\leq\delta}$ as the connected component of $h^{-1}((-\infty,\delta]))$ containing $p$.
\smallskip

    Now choose $\delta_n\rightarrow0$, after a similar argument in the proof of \cite[Proposition 3.10]{tei}, we can find $p_n\in U_{\leq\delta_n}$ that projects the fixed compact set $K$ in $\mathcal{M}_{g}$. Here $\mathcal{M}_g$ is the moduli space corresponding to the surface $S$. Based on the construction of $U_{\leq\delta}$, after passing to a subsequence, we may assume that $p_n\rightarrow q\in\overline{U\cap\Omega}$. Note that $h(z)=0$ if and  only if $z\in\partial\Omega$, and
    \[
    0\leq h(q)=\lim_{n\rightarrow\infty}h(p_n)\leq\lim_{n\rightarrow\infty}\delta_n=0.
    \]
    Hence we obtain that $q\in\partial\Omega\cap U$. It implies that $q$ is an accumulation point of an orbit of $\operatorname{Aut}(\Omega)$ from \cite[Section 3.4]{tei}. After shrinking $U$ if necessary, we may assume $\Omega$ is locally convex and $\partial\Omega$ is $C^2$-smooth near $q$, then Theorem \ref{one} implies that $\operatorname{Aut}(\Omega)$ has a continuous family of automorphisms, which contradicts that $\operatorname{Aut}(\mathcal{T}_{g})$ is discrete. And this completes the proof.
$\hfill\square$

\vspace*{3mm}
\noindent {\bf Funding}.
Xingsi Pu was supported by the National Natural Science Foundation of China (Grant No. 12501091), the Science and Technology Research Program of Chongqing Municipal Education Commission (Grant No. KJQN202501110), Chongqing Natural Science Foundation Innovation and Development Joint Fund (CSTB2025NSCQ-LZX0059), Scientific Research Foundation of the Chongqing University of Technology (No. 2024ZDZ027). Both authors were supported by National Key R\&D Program of China (Grant No. 2021YFA1003100).

\bibliography{ref}
\bibliographystyle{plain}{}
\end{document}